\newcommand{\bit}{\begin{itemize}}
\newcommand{\eit}{\end{itemize}}
\newcommand{\real}{\mathbb{R}}
\newcommand{\N}{\mathbb{N}}
\newcommand{\Z}{\mathbb{Z}}
\newtheorem{theorem}{Theorem}[section]
\newtheorem{definition}[theorem]{Definition}
\newtheorem{proposition}[theorem]{Proposition}
\newtheorem{remark}[theorem]{Remark}
\title{Sobolev meets Besov: \\
Regularity for the Poisson equation with  Dirichlet, Neumann and mixed boundary values }
\date{\today}
\author
{Cornelia Schneider\footnote{Friedrich-Alexander University Erlangen-Nuremberg, Applied Mathematics III, Cauerstr. 11, 91058 Erlangen, Germany. Email: \href{mailto:schneider@math.fau.de}{schneider@math.fau.de}}\ \thanks{The work of this author has been supported by Deutsche Forschungsgemeinschaft (DFG), Grant No. SCHN 1509/1-2.}
\qquad 
Flóra O. Szemenyei\footnote{\emph{Corresponding author}. Friedrich-Alexander University Erlangen-Nuremberg, Applied Mathematics III, Cauerstr. 11, 91058 Erlangen, Germany. Email: \href{mailto:szemenyei@math.fau.de}{szemenyeif@math.fau.de}\vspace{0.2cm}}
}
\begin{document}
\maketitle

\footnotetext{{\em Math Subject Classifications. Primary:}   35B65, 46E35.   {\em Secondary:} 35J05, 35J25, 42C40, 65M12. }

\footnotetext{\textit{Keywords and Phrases.} Elliptic boundary value problems, Poisson equation, adaptive methods, Besov spaces, weighted Sobolev spaces, mixed weights, polyhedral cone.}

\begin{abstract}

We study the regularity of solutions of the Poisson equation with  Dirichlet, Neumann and mixed boundary values in polyhedral cones $K\subset \real^3$ in the  specific scale $\ B^{\alpha}_{\tau,\tau}, \ \frac{1}{\tau}=\frac{\alpha}{d}+\frac{1}{p}\ $ of Besov spaces. The regularity of the solution in these  spaces determines the order of approximation that can be achieved by adaptive and nonlinear numerical schemes.  We aim for a thorough discussion of homogeneous and inhomogeneous boundary data in all settings studied and show that the solutions are much smoother in this specific Besov scale compared to the  
fractional Sobolev scale $H^s$ in all cases,  which justifies the use of adaptive schemes.  

\end{abstract}


\section{Introduction}

In this article we investigate  the regularity of  solutions of 
 the Poisson equation in (bounded) polyhedral cones $K \subset \real^3$ with different boundary conditions, i.e.,  we consider the model problem 
\begin{equation} \label{poisson-gen}
-\Delta u\ =\ f     \  \text{ in }\  K , \qquad  u=g_j \ \text{ on }\  \Gamma_j, \ j\in J_0, \qquad \frac{\partial u}{\partial \nu}=g_j \   \text{ on }\  \Gamma_j, \ j\in J_1, 
\end{equation}
where $\Gamma_j$, $j=1,\ldots,  n$ denote the faces of the cone with  inhomogeneous   Dirichlet ($j\in J_0$) and/or Neumann boundary conditions ($J\in J_1$). 
We are interested in regularity properties  of the unknown solution $u$ in specific nonstandard smoothness spaces, i.e.,  the so-called  {adaptivity scale of Besov spaces} 
\begin{equation}\label{adaptivityscale}
B^{\alpha}_{\tau,\tau}(\Omega), \qquad \frac{1}{\tau}=\frac{\alpha}{d}+\frac 1p, \qquad \alpha>0,
\end{equation} 
where $\alpha>0$ stands for the smoothness of the solution and $\tau$ displays its integrability, cf. \cite{DDV97}.  
%
%
Our investigations are motivated by  fundamental questions arising in the context of the numerical treatment of PDEs. In particular, we aim at justifying the use of adaptive numerical methods for solving such boundary value problems. 
Let us explain these relationships in more detail:   In an adaptive strategy, the choice of the underlying degrees of freedom is not a priori fixed but depends on the shape of the unknown solution. In particular, additional degrees of freedom are only spent in regions where the numerical approximation is still 'far away' from the exact solution.
Although the basic idea is convincing,  adaptive algorithms are hard to implement, so that beforehand a rigorous mathematical analysis to justify their use  is highly desirable. 


For adaptive wavelet methods the best one can expect is an optimal performance in the sense that it realizes the convergence rate of best $N$-term approximation schemes, which serves as a benchmark in this context. If we  denote by  $\sigma_N\bigl(u;X\bigr)$  the error of best $N$-term approximation in a Banach space $X$  (i.e., the error when  we consider the best approximation of a given function by linear combinations of some  basis functions   of $X$ consisting of at most $N$ terms), then 
\cite[Thm. 11, p.~586]{DNS2} implies for $\tau<p$, 
\begin{equation}\label{error}
    \sigma_N\bigl(u;L_p(\Omega)\bigr)\leq C\,N^{-\alpha/d}\|u|B^{\alpha}_{\tau,\tau}(\Omega)\|, \qquad  \frac{1}{\tau}<\frac{\alpha}{d}+\frac 1p,  
\end{equation}
where $d$ denotes the dimension of the underlying domain $\Omega$. 
Quite recently, it has turned out that the same interrelations also hold for the very important and  widespread adaptive finite
 element schemes, cf.   \cite[Thm. 2.2]{GM09}. 
Hence, it can be seen from \eqref{error} that 
the achievable order of adaptive algorithms depends on the regularity of the target function in the specific scale of Besov spaces \eqref{adaptivityscale}. 

On the other hand, it is the  regularity of the solution in the scale of (fractional) Sobolev spaces $H^s(\Omega)$, where now $s$ indicates the smoothness of $u$, which encodes information on the convergence order for  nonadaptive (uniform) methods.  
Therefore, the use of adaptivity is justified if the  smoothness within the scale of Besov spaces of the exact solution of a given PDE is high enough compared to the classical Sobolev smoothness. \\

At this point, the shape of the domain comes into play. As the classical model problem of elliptic  equations, let us discuss the Poisson equation. It is well--known
that  if the domain under consideration, the right--hand side and the coefficients are sufficiently smooth, then the problem is completely regular, cf. \cite{ADN59}, meaning that  if the right-hand side is in $H^s(\Omega)$, $s\geq 1$, then the solution is contained in $H^{s+2}(\Omega)$ and there
is no reason why the  Besov smoothness should be higher. 
However, on nonsmooth (e.g. polyhedral) domains, the situation changes completely. On  these domains, singularities at the boundary may occur that diminish the Sobolev regularity of the solution significantly \cite{CW20,Cost19, Gris, Gris11,JK95} (but can be compensated by suitable weight functions).
Moreover, recent studies show that these boundary singularities do not influence the Besov regularity too much \cite{DDD, DDV97}, so that for certain nonsmooth domains the use of 
adaptive algorithms is completely justified. Moreover, for the specific case of polygonal domains in $\real^2$, even more can be said, since then the exact solution can be decomposed into a regular and a singular part corresponding to the reentrant corners \cite{Gris}. In this case the Besov smoothness depends on the smoothness of the right-hand side only, i.e., for arbitrary smooth right-hand sides, in principle one gets arbitrary high order of convergence, cf. \cite{Dah99b}. In the polyhedral case, a decomposition of the solution into a singular and a regular part is also possible, however, the situation is much more complicated since edge and vertex singularities occur, cf.  \cite{HS1, Gris}. We contribute to these studies by extending and improving the above mentioned forerunners. For this we  use  Sobolev spaces with mixed weights from Maz'ya, Rossmann  \cite{MR10}, sometimes denoted as $V$-spaces in the sequel. In particular, spaces with mixed weights are needed when studying stochastic PDEs as is demonstrated in  \cite{CLK19, Cio20}. 
To be precise, the 
 weighted Sobolev spaces  $V_{\beta,\delta}^{l,p}(K)$ with parameters  $l\in \N_0$, $1\leq p<\infty$, $\beta\in \real$, and $\delta=(\delta_1,\ldots, \delta_n)\in \real^n$ 
 we use are characterized by the norm 
\begin{equation}\label{V-spaces}
\|u| V_{\beta,\delta}^{l,p}(K)\|:=
\displaystyle \bigg( \int_{K}\sum_{|\alpha|\leq l} 
\rho_{0}(x)^{p(\beta-l+|\alpha|)}\prod_{k=1}^{n} \bigg( \dfrac{r_k(x)}{\rho_{0}(x)}  \bigg)^{p(\delta_k-l+|\alpha|)} |\partial^{\alpha}u(x)|^p\: dx
\bigg)^{1/p},  
\end{equation}
where   $\rho_0$ denotes the  distance of a point $x$ to the vertex of the cone $K$ and $r_k(x)$ the  distances to the respective edges $M_k$ of the cone. It is precisely this mixture of the different weights involved and their interplay, which allows for an even finer description when investigating the singularities of the solutions of PDEs on polyhedral cones or even general  domains of polyhedral type.


Let us point out here that the above $V$-spaces are adequate when studying  pure Dirichlet problems  (i.e., $J_1=\emptyset$ in \eqref{poisson-gen}) but for the Neumann and mixed boundary value problems we need suitable adaptations for our investigations from \cite{MR10}, cf. Definition \ref{def-W-space} below.

The $V$-spaces can be seen as  refinements of the so called Kondratiev spaces that contain only one weight \cite{SMCW} 
and which in turn are closely related with Besov spaces in
 the adaptivity scale 
 \eqref{adaptivityscale} in the sense that powerful embedding results exist \cite{Han15}. 
We generalize and improve this embedding  to the setting of  $V$-spaces. 
Based on our extension operator from  \cite[Thm.~3.1]{SS20} when  $p=2$ we establish an embedding of the form 
\begin{equation}\label{emb-improved}
V_{\beta,\delta}^{l,2}(K)\cap H^s(K)\hookrightarrow B^{\alpha}_{\tau, \tau}(K), \qquad \alpha<\min \Big\{l, 3(l-|\delta|), 3s)\Big\}, \quad \frac{1}{\tau}=\frac{\alpha}{3}+\frac 12, 
\end{equation}
subject to further restrictions on the appearing parameters, cf. Theorems \ref{theorem4} and  \ref{theorem5} below. The results from  \cite[Thms.~2.1, 3.1]{HS1} may be seen as a first forerunner of our result. However, due to the lack of an extension operator the authors obtained the  restriction $\alpha<\frac 32 s$, which we now improve to $\alpha<3s$.  Furthermore, it turned out that the results for negative parameters $\delta<0$ in \cite{HS1} were not correct and required a more subtle analysis, which we perform in detail in Theorem \ref{theorem5}.

Afterwards,  applying the embedding result \eqref{emb-improved} and regularity assertions from  \cite{MR02, MR10}, we show that the regularity of the solution in weighted Sobolev spaces is sufficent to establish Besov smoothness (in the original unweighted sense) for the solutions of \eqref{poisson-gen}.  Moreover, 
 in comparision  with the $H^s$-regularity results from  \cite{Gris} we see that the Besov regularity  is much higher: since $\alpha<3s$ in \eqref{emb-improved} by factor $3$ when $l$ is large. Fortunately, the embedding \eqref{emb-improved} transfers mutatis mutandis to the weighted spaces needed for studying Neumann and mixed boundary values. Our main results are  stated in Theorems \ref{theorem7}, \ref{pdeforWspace}, and \ref{pdeforWspacemixed} and demonstrate that in all cases investigated, the use of adaptive wavelet schemes is  justified. 


In a forthcomming paper we will further exploit \eqref{emb-improved} for studying similar questions for parabolic problems in the spirit of \cite{DS19}. \\


   The paper is organized as follows. In Section 2 we introduce Sobolev spaces with mixed weights on polyhedral cones and state some of  their relevant  properties needed for our later investigations. Furthermore, we provide a definition of  Besov spaces via wavelet decompositions. Afterwards, in Section 3, we study   embeddings between these scales of spaces. Finally,  in Section 4 we apply the obtained embeddings in order to establish (and compare) regularity results in fractional Sobolev and  Besov spaces for the Poisson equation with different boundary conditions.

\section{Preliminaries}



\paragraph{Notation}

We start by collecting some general notation used throughout the paper. As usual,   $\N$ stands for the set of all natural numbers, $\N_0=\mathbb N\cup\{0\}$, $\Z$ denotes the integers, and 
$\real^d$, $d\in\N$, is the $d$-dimensional real Euclidean space with $|x|$, for $x\in\real^d$, denoting the Euclidean norm of $x$. \\
Let $\N_0^d$ be the set of all multi-indices, $\alpha = (\alpha_1, \ldots,\alpha_d)$ with 
$\alpha_j\in\N_0$ and $|\alpha| := \sum_{j=1}^d \alpha_j$. For partial derivatives $\partial^\alpha f=\frac{\partial^{|\alpha|}f}{\partial x^\alpha}$ we will occasionally also write $f_{x^\alpha}$. Furthermore, $B(x,r)$  is the open ball 
of radius $r >0$ centered at $x$, and for a measurable set $M\subset\real^d$ we denote by $|M|$ its Lebesgue measure.

We denote by  $c$ a generic positive constant which is independent of the main parameters, but its value may change from line to line. 
The expression $A\lesssim B$ means that $ A \leq c\,B$. If $A \lesssim
B$ and $B\lesssim A$, then we write $A \sim B$.  


Throughout the paper 'domain' always stands for an open and connected set. The test functions on a domain $\Omega$   are denoted by  $C^{\infty}_0(\Omega)$.    
Let $L_p(\Omega)$, $1\leq p\leq \infty$, be the Lebesgue spaces on $\Omega$ as usual.  
We denote by ${C}(\Omega)$  the space of all bounded  continuous functions $f:\Omega\rightarrow \mathbb{R}$  
and  ${C}^k(\Omega)$, $k\in \N_0$, is the space of all functions $f\in {C}(\Omega)$ such that $\partial^{\alpha}f\in {C}(\Omega)$ for all $\alpha\in\N_0$ with $|\alpha|\leq k$, 
endowed with the norm $\sum_{|\alpha|\leq k}\sup_{x\in \Omega}|\partial^{\alpha}f(x)|$.\\
Moreover, 
the set of distributions on $\Omega$ will be denoted by $\mathcal{D}'(\Omega)$, whereas $\mathcal{S}'(\real^d)$ denotes the set of tempered distributions on $\real^d$.

\paragraph{Sobolev and fractional Sobolev spaces}
Let $m \in \N_0$ and  $1\le p \leq \infty$. Then
 $W^{m}_p(\Omega)$ denotes the standard \textcolor{black}{$L_p$-Sobolev spaces of order $m$}  on the domain $\Omega$, equipped with the norm
$$
\|\, u \, |W^m_{p}(\Omega)\| := \Big(\sum_{|\alpha|\leq m} \int_\Omega |\partial^\alpha u(x)|^p\,dx\Big)^{1/p}.$$ 
If $p=2$ we shall also write $H^m(\Omega)$ instead of $W^m_2(\Omega)$.
Moreover, for $s\in \real$  we define the fractional Sobolev spaces ${H}^s(\mathbb{R}^d)$
as the collection of all $u\in \mathcal{S}'(\mathbb{R}^d)$ such that 
\begin{align*}
  \|u|{H}^s(\mathbb{R}^d)\|:= \| 
  \mathcal{F}^{-1}((1+|\xi|^2)^{s/2} \mathcal{F}u )|L_2(\mathbb{R}^d)
  \|<\infty,  
\end{align*}
where   $\mathcal{F}$ denotes the Fourier transform with inverse $\mathcal{F}^{-1}$. These spaces partially coincide with the classical Sobolev spaces for $s=m$ with $m\in \mathbb{N}_0$. 
Corresponding spaces on domains can be defined via restrictions of functions from ${H}^s(\mathbb{R}^d)$ equipped with the norm 
\begin{align*}
  \|u|{H}^s(\Omega)\|:=  \inf \left\{ \|g|{H}^s(\mathbb{R}^d)   \| :\: g\in {H}^s(\mathbb{R}^d),\: g|_{\Omega}=u     \right\}.
\end{align*}
If, additionally, $\Omega\subset\mathbb{R}^d$ is  a bounded Lipschitz domain, then for $s=m+\sigma$ with $m\in \mathbb{N}_0$ and $\sigma\in (0,1)$, an alternative norm comes from the Sobolev-Slobodeckij  spaces and is given by
\begin{align*}
    \|u|H^s(\Omega)\|= \|u|H^m(\Omega)\|+
    \displaystyle\sum_{|\alpha|=m} \left( \int_{\Omega} \int _{\Omega} \frac{|\partial^{\alpha}u(x)-\partial^{\alpha}u(y) |^2}{|x-y |^{d+2\sigma}}  \:dx\:dy \right)^{1/2}
    <\infty. 
\end{align*}
This follows from the extension operator in \cite[Thm.~1.2.10]{Gris} and the fact that the corresponding spaces defined  on $\real^d$ coincide, cf. \cite[Sect.~2.3.5]{T-F1}.


\paragraph{Polyhedral cone}\label{polyhedralcone}
We mainly consider function spaces defined on polyhedral cones in the sequel. Let 
$$K:=\{ x \in \mathbb{R}^3: \: 0<|x|<\infty,\: x/|x|\in \Omega  \}$$
be an infinite cone in $ \mathbb{R}^3 $ with vertex at the origin. Suppose that the boundary $\partial K$ consists of the vertex $x=0$, the edges (half lines) $M_1,...,M_n$, and smooth faces $\Gamma_1,...,\Gamma_n$. Hence, $\Omega=K\cap S^2$ is a domain of polygonal type on the unit sphere $S^2$ with sides $\Gamma_k \cap S^2$. The angle at the edge $M_j$ is denoted by $\theta_j$. 
Moreover, we consider the bounded polyhedral cone $\tilde{K}$ obtained via  truncation
\[
    \label{truncatedcone}
    \tilde{K}:=K\cap B(0,r).
\]
\begin{figure}[h] 
\begin{minipage}{0.5\textwidth}             
    \def\svgwidth{160pt}    
    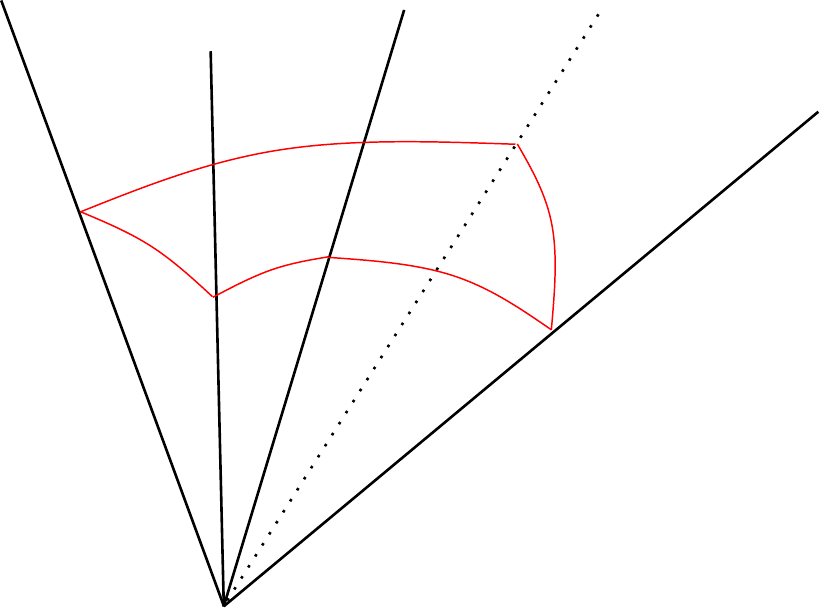  
    \caption{Infinite polyhedral cone $K$} 
    \label{fig:inf}
    \end{minipage}\hfill \begin{minipage}{0.5\textwidth}  
           \centering              
    \def\svgwidth{130pt}    
    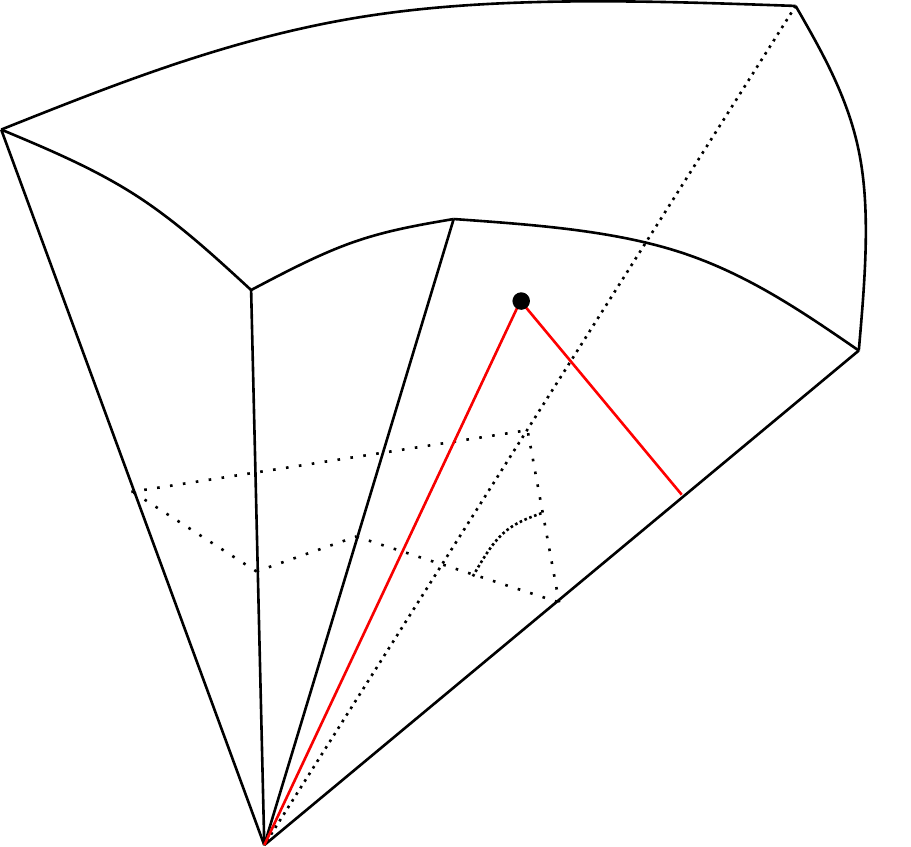
    \caption{Bounded polyhedral cone $\tilde{K}$}
    \label{fig:bound}  
    \end{minipage}\\
\end{figure}


The singular points of the polyhedral cone $K$ are those $x\in \partial K$ for which for any $\varepsilon>0$ the set $\partial K \cap B(x, \varepsilon)$ is not smooth, i.e., the vertex $0$ and the edges $M_1,...,M_n$. 
When we consider the bounded cone $\tilde{K}$  we omit the non-smooth points induced by the truncation and also consider $S=\{0\}\cup M_1 \cup...\cup M_n$, which in this case is not the entire singularity set.\\

\subsection{Weighted Sobolev spaces and their properties}
\label{Vspace}

\begin{definition}
Let $K$ be a (bounded or unbounded) polyhedral cone in $\mathbb{R}^3$ and  $S=\{0\}\cup M_1 \cup...\cup M_n$. Then the space $ V_{\beta,\delta}^{l,p}(K,S)$ is defined as the closure of the set $$C_*^{\infty}(K,S):=\{ u|_K: \: u\in C_0^{\infty}(\mathbb{R}^3\setminus S) \}$$ 
with respect to the norm 
\begin{align}
\label{norm}
&\|u| V_{\beta,\delta}^{l,p}(K,S)\|:=
\displaystyle \bigg( \int_{K}\sum_{|\alpha|\leq l} 
\rho_{0}(x)^{p(\beta-l+|\alpha|)}\prod_{k=1}^{n} \bigg( \dfrac{r_k(x)}{\rho_{0}(x)}  \bigg)^{p(\delta_k-l+|\alpha|)} |\partial^{\alpha}u(x)|^p\: dx
\bigg)^{1/p}, 
\end{align}
where $  \beta \in \mathbb{R}$, $l\in\mathbb{N}_0$, $\delta=(\delta_1,...,\delta_n)\in \mathbb{R}^n$, $1\leq p< \infty$, $\rho_0(x)=\mathrm{dist}(0,x)$ denotes the distance to the vertex, and $r_j(x):= \mathrm{dist} (x,M_j) $ the distance to the edge $M_j$.
\end{definition}


\begin{remark}
We collect some remarks and properties concerning the weighted Sobolev spaces $ V_{\beta,\delta}^{l,p}(K,S)$.

\begin{itemize}
\item In the sequel if there is no confusion we  omit $S$ from the notation and write shortly $V_{\beta,\delta}^{l,p}(K)$.
\item The space $ V_{\beta,\delta}^{l,p}(K)$ is a Banach space for $1\leq p <\infty$. The proof can be derived from a generalized result, see \cite[p.~18, Thm.~3.6]{Kuf}.

    \item $V_{\beta,\delta}^{l,p}(K) \subset L_p(K)$ for $l\geq\beta $ and $l\geq\delta_k,\: k=1,...,n$.
      
    \item We have the following embeddings which one obtains easily from the definition of the spaces $ V_{\beta,\delta}^{l,p}(K)$: 
    $$V_{\beta,\delta}^{l,p}(K)\subset V_{\beta-1,\delta-1}^{l-1,p}(K)\subset ...\subset V_{\beta-l,\delta-l}^{0,p}(K). $$

    
   \item  
   A function $\varphi\in C^l(K)$ is a pointwise multiplier in $V_{\beta,\delta}^{l,p}(K)$, i.e., for  
     $u\in V_{\beta,\delta}^{l,p}(K)$ we have
   \begin{align}
   \label{okt15d}
   \|\varphi u|V_{\beta,\delta}^{l,p}(K)\|\leq c \|u|V_{\beta,\delta}^{l,p}(K)\|.
   \end{align}
   \item The weighted Sobolev spaces $ V_{\beta,\delta}^{l,p}(K)$ are refinements of the Kondratiev spaces  $\mathcal{K}_{a,p}^m(K)$, which for $m\in \mathbb{N}_0$, $1\leq p<\infty $,  and $a\in \mathbb{R}$  are  defined as the collection of all measurable functions such that 
	 \begin{align*}
	 \|u|\mathcal{K}_{a,p}^m(K)\|:=\left(
	 \displaystyle\sum_{|\alpha|\leq m}\int_{K} |\rho(x)^{|\alpha|-a}\partial^{\alpha}u(x)|^p\: dx
	 \right)^{1/p}<\infty,
	 \end{align*}
	 with  weight function $
	\rho(x):=\min(1,\mathrm{dist}(x,S))$ for $x\in K$. 
   In particular,  the  scales coincide if
   $$m=l,\: \delta=(\delta_1,...,\delta_n)=(l-a,...,l-a)\quad \text{and}\quad \beta=l-a.$$ 
   This follows from \cite[p.~90, Subsection~3.1.1, (3.1.2)]{MR10}. 
   \end{itemize}
 
\end{remark}

   



In a preceding paper \cite[Thm.~3.1]{SS20},  we constructed an extension operator for the spaces $V_{\beta,\delta}^{l,p}(K)$. 
Our result reads as follows: 

 \begin{theorem}[\bf Extension operator]
\label{extop}

Let $K\subset \mathbb{R}^3 $ be a  polyhedral cone and let $l\in \mathbb{N}_0$, $ \beta\in \mathbb{R}$, $\delta\in\mathbb{R}^n $ and $1\leq p<\infty$. Then there exists a bounded linear extension operator
\begin{align}
    \label{dec8}
    \mathfrak{E}:V_{\beta,\delta}^{l,p}(K,S)\rightarrow V_{\beta,\delta}^{l,p}(\mathbb{R}^3,S),
\end{align}
where $S$ is the singularity set of $K$.
\end{theorem}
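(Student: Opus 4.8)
The plan is to build the extension operator $\mathfrak{E}$ by a localization-and-patching argument that respects the multi-scale geometry of the singularity set $S=\{0\}\cup M_1\cup\dots\cup M_n$. First I would fix a suitable partition of unity subordinate to a dyadic decomposition of $K$ near $S$: cover a neighborhood of the vertex by dyadic annuli $\{2^{-k}\le \rho_0(x)\le 2^{-k+1}\}$, and within each annulus cover a neighborhood of each edge $M_j$ by the analogous dyadic tubes in the edge-distance $r_j$. On each of the resulting "cubes" the weights $\rho_0(x)$ and $r_k(x)/\rho_0(x)$ are comparable to fixed constants $2^{-k}$, $2^{-m}$, so that the weighted norm \eqref{norm} restricted to such a piece is equivalent (after rescaling the piece to unit size by an anisotropic dilation) to an ordinary unweighted Sobolev norm $W^{l,p}$ on a reference configuration. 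The part of $K$ bounded away from $S$ causes no trouble: there $V^{l,p}_{\beta,\delta}(K)$ coincides with the ordinary $W^{l,p}$ of a Lipschitz domain, and Stein's (or Calderón's) classical extension theorem applies directly.

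The core step is then: on each dyadic piece, transport the problem by the rescaling to a \emph{fixed} reference domain — a piece of a model dihedral or polyhedral wedge — where I can invoke a standard bounded Sobolev extension operator with a norm bound \emph{independent of the scale $k$ (and $m$)}. Pulling this back, multiplying by the partition of unity, and summing over all dyadic indices produces a function on $\mathbb{R}^3$. The key points to verify are (i) that the local extensions, once multiplied by cutoffs and glued, still lie in $V^{l,p}_{\beta,\delta}(\mathbb{R}^3,S)$ with the finitely-overlapping dyadic pieces keeping the $\ell^p$-sum of local norms comparable to the global norm (this uses $p<\infty$ and the bounded overlap of the cover), and (ii) that $\mathfrak{E}u|_K = u$, which is automatic if the cutoffs form a partition of unity on $K$ and each local extension is genuinely an extension. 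The derivatives of the cutoff functions are controlled precisely because they scale like $2^{k}$ in $\rho_0$ and like $2^{m}$ in $r_j$, which is exactly compensated by the weight powers in \eqref{norm}; this is where the multiplier property \eqref{okt15d} and the chain of embeddings $V^{l,p}_{\beta,\delta}(K)\subset V^{l-1,p}_{\beta-1,\delta-1}(K)$ are used to absorb the lower-order terms generated by Leibniz' rule.

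The main obstacle I expect is the \emph{interaction between the vertex scale and the edge scales}: near a point that is simultaneously close to the vertex and to an edge, the two weights $\rho_0$ and $r_j$ are both small but with an independent ratio $r_j/\rho_0\in(0,1)$, so the local geometry is an anisotropically rescaled dihedral wedge whose opening does not degenerate but whose two relevant length scales differ. One must organize the dyadic decomposition as a genuine two-parameter family (indexed by $(k,m)$ with $m\ge k$, say, for the edge-close pieces, plus a one-parameter family of edge-far, vertex-close annuli) and check that the reference extension operator can be chosen uniformly over this whole family — i.e. that the anisotropic scaling maps each piece to one of finitely many bi-Lipschitz-equivalent model configurations. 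A secondary technical point is behaviour as one approaches the edges/vertex "from outside" in $\mathbb{R}^3\setminus K$: the target space $V^{l,p}_{\beta,\delta}(\mathbb{R}^3,S)$ uses the same weights measured from $S$, so the model extension must not spread mass into regions where the weight is much smaller; choosing the reference extension to be supported near the original piece (a "local" extension operator) handles this. Once uniformity of the model extension and bounded overlap of the cover are in hand, summing the $p$-th powers of the local estimates over $(k,m)$ gives the desired bound $\|\mathfrak{E}u\,|\,V^{l,p}_{\beta,\delta}(\mathbb{R}^3,S)\|\le c\,\|u\,|\,V^{l,p}_{\beta,\delta}(K,S)\|$, and linearity and boundedness of $\mathfrak{E}$ follow.
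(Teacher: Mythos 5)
The paper does not actually prove Theorem \ref{extop}: it is imported verbatim from the companion paper \cite[Thm.~3.1]{SS20}, so there is no in-paper argument to compare yours against. That said, the surrounding text makes clear what the cited construction is --- a ``norm localization'' lemma (\cite[Lem.~2.4]{SS20}) combined with what the authors themselves call ``Stein's extension operator for the spaces $V_{\beta,\delta}^{l,p}(K)$'' --- and your outline (a two-parameter dyadic decomposition adapted to $\rho_0$ and $r_j$, rescaling each piece to one of finitely many model configurations, a uniform Stein-type local extension, and gluing via a partition of unity with bounded overlap, the cutoff derivatives being absorbed by the weight powers) is precisely that strategy, so your proposal is essentially the same approach as the cited proof. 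One point of caution: the reduction to a fixed reference configuration should be done by \emph{two successive isotropic} rescalings (first by $2^{k}$ in $\rho_0$, then by $2^{m}$ about the edge), not by a single genuinely anisotropic dilation, since the latter would distort the dihedral opening by an $m$-dependent factor and destroy the claim that only finitely many bi-Lipschitz model configurations occur; you should also note explicitly that the absence of any restriction on $\beta$ and $\delta$ rests on $V_{\beta,\delta}^{l,p}(K,S)$ being defined as the closure of functions vanishing near $S$, so that it suffices to extend such functions (for which only finitely many dyadic pieces are active) and pass to the closure by boundedness.
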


\begin{remark}
The norm of the space $V_{\beta,\delta}^{l,p}(\mathbb{R}^3,S) $ is defined similarly as the norm of $ V_{\beta,\delta}^{l,p}(K,S)$ replacing the integral domain $K$ by $ \mathbb{R}^3$.
\end{remark}

In order to  study inhomogeneous boundary conditions $g_j\neq 0$ later on,  we introduce the   trace spaces $V_{\beta,\delta}^{l-1/p,p}(\Gamma_j) $  from    \cite[Section~3.1.4]{MR10}.

\begin{definition}
\label{tracespaceV}
Let $l\in\mathbb{N}$, $1\leq p<\infty$, $\beta \in \mathbb{R}$, and $\delta=(\delta_1,...,\delta_n)\in \mathbb{R}^n$. We denote by $V_{\beta,\delta}^{l-1/p,p}(\Gamma_j) $ the trace space for $V_{\beta,\delta}^{l,p}(K)$ on the face $\Gamma_j$ of the cone $K$. The corresponding norm is defined as 
\begin{align*}
    \|u|V_{\beta,\delta}^{l-1/p,p}(\Gamma_j) \| =\inf \{\|v| V_{\beta,\delta}^{l,p}(K)\|: \: v\in V_{\beta,\delta}^{l,p}(K),\  v=u  \text{ on }  \Gamma_j    \}.
\end{align*}
\end{definition}

\subsection{Besov spaces via wavelets}
\label{besovchar}

We introduce Besov spaces via wavelet decompositions as in   \cite[Section~2, p.~563]{Han15}. 
Consider the wavelet construction of Daubechies with the mother function $\eta$ and the scaling function $\phi$,  which satisfy the following conditions:
\begin{align*}
(i)& \quad \text{ compact support:} \quad\text{ supp}\:\eta,\phi \subset [-N,N]^d,\\
(ii)& \quad\text{ moment conditions:}\quad \displaystyle\int_{\mathbb{R}^d} x^{\alpha}\eta(x)dx=0 \: \ \text{ for }\ \: \alpha\in \mathbb{N}_0^d,\ |\alpha|\leq r,\\
(iii)& \quad\text{ sufficiently high smoothness:}\quad \eta,\phi \in C^r(\mathbb{R}^d).
\end{align*}

We set $\psi^0=\phi$, $\psi^1=\eta$, define $E$ as the set of the nontrivial vertices of $[0,1]^d$, and put
$$\psi^e(x_1,...,x_d):=\displaystyle\prod_{j=1}^d \psi^{e_j}(x_j)\: \text{ for }\: e\in E . $$
Moreover, let 
$$ \Psi':=\{\psi^e:  \: e\in E  \},$$
and consider the set of dyadic cubes,
$$ D=\{I_{j,k}:=2^{-j}k+2^{-j}[0,1]^d: \: k\in\mathbb{Z}^d, \: j\in \mathbb{Z}\}.$$
Now we put $\Lambda'=D^+\times \Psi'$, where $D^+$ is the set of the dyadic cubes with measure at most 1. Furthermore, we use the notation $D_j:=\{I\in D: \: |I|=2^{-jd}\}$. 
The functions from $\Psi'$ are rescaled in the following way:
$$ \psi_{I_{j,k}}:=\psi_{j,k}:=2^{jd/2}\psi(2^j\cdot -k)  \quad \text{with}\quad I_{j,k}\in D,\: k\in\mathbb{Z}^d, \: j\in \mathbb{Z},\: \psi \in \Psi'. $$
Then the set
$$\{ \psi_I:\: I\in D  , \psi\in \Psi' \}$$
forms an orthonormal basis in $L_2(\mathbb{R}^d)$. 
Let $Q(I)$ denote some dyadic cube of minimal size such that $ \text{supp}\:\psi_I\in Q(I)$ for every $\psi \in \Psi'$. 
 Then we can write every function $f\in L_2(\mathbb{R}^d)$ as
$$f=P_0f+\displaystyle\sum_{(I,\psi)\in \Lambda'} \langle f,\psi_I\rangle \psi_I,$$
where $P_0$ is the orthogonal projector  onto the closure of the span of the function $\Phi(x)=\phi(x_1)...\phi(x_d)$ and its integer shifts $\Phi(\cdot-k),\: k\in \mathbb{Z}^d$, in $L_2(\mathbb{R}^d)$. Moreover, $f$ can be written as 
$$f=\displaystyle\sum_{(I,\psi)\in \Lambda} \langle f,\psi_I\rangle \psi_I, \quad \text{where} \quad  \Lambda=D^+\times \Psi,\: \Psi=\Psi'\cup  \{ \Phi\}.$$

With  these considerations we can now 
define Besov spaces $B^s_{p,q}(\real^d)$ by decay properties of the wavelet coefficients, if the parameters fulfill certain conditions, as follows.

 
 \begin{definition} 
 Let  $0<p,q<\infty$ and $s>\max\left(0,d(\frac{1}{p}-1)\right)$. Choose $r\in \mathbb{N}$ such that $r > s$ and construct a wavelet Riesz basis as described above.
 Then the function $f\in L_p(\mathbb{R}^d)$ belongs to the Besov space $B_{p,q}^s(\mathbb{R}^d)$ if it  admits a decomposition of the form
 \begin{align*}
 f=P_0f+\displaystyle\sum_{(I,\psi)\in \Lambda'} \langle f,\psi_I\rangle \psi_I
 \end{align*}
(convergence in $ \mathcal{S}'(\mathbb{R}^n) $) with 
 \begin{align*}
 &\|f|B_{p,q}^s(\mathbb{R}^d)\|:=\|P_0f|L_p(\mathbb{R}^d)\|\\
 &+\displaystyle\left( 
 \sum_{j=0}^{\infty}2^{j\left(s+d\left(\frac{1}{2}-\frac{1}{p}\right)\right)q}\Bigg(
 \sum_{(I,\psi)\in D_j\times \Psi'}|\langle f,\psi_I\rangle|^p
 \Bigg)^{q/p}
   \right)^{1/q}<\infty.  
 \end{align*}
  \end{definition}
  
 Corresponding spaces on domains $\Omega\subset \real^d$ can be defined by  restriction via
 \begin{align*}
 B_{p,q}^s(\Omega):=\{ f\in \mathcal{D}'(\Omega):\: \exists\: g \in B_{p,q}^s(\mathbb{R}^d), \: g|_{\Omega}=f\}, 
 \end{align*}
 normed by $$\|f|B_{p,q}^s(\Omega)\|:=\displaystyle\inf_{g|_{\Omega}=f}\|g|B_{p,q}^s(\mathbb{R}^d)\|.$$

\begin{remark}
\begin{itemize}
\item We mention that according to \cite{Ry} for a bounded Lipschitz domain $\Omega$ there exists a universal extension operator $E:B_{p,q}^s(\Omega)\longrightarrow B_{p,q}^s(\mathbb{R}^d)$ satisfying 
 \begin{align*}
 \|\tilde{u}|B_{p,q}^s(\mathbb{R}^d)\|
  \lesssim \|u|B_{p,q}^s(\Omega)\|,
 \end{align*}
 where $ \tilde{u}= Eu$.
 \item Moreover, when $p=q=2$ the Besov spaces coincide with the fractional Soblev spaces, i.e., in this case we have the coincidence $B^s_{2,2}(\Omega)=H^s(\Omega)$, $s>0$. 
 \end{itemize}
 \end{remark}
 

  \section{Embeddings from weighted Sobolev spaces into Besov spaces}\label{embeddings3}

In this section we establish embedding results, which show the close relation between the Sobolev spaces $V_{\beta,\delta}^{l,p}(K)$ with mixed weights and  Besov  spaces. These will be our main tools when investigating the Besov regularity of solutions to the Poisson equation with different boundary conditions in Section \ref{regularityproblems}. \\
In particular,  the proofs rely on the ideas from \cite[Thm.~1]{Han15} and \cite[Thms.~2.1,~3.1]{HS1} and are performed 
by combining  Sobolev estimates involving mixed weights   with the characterization of Besov spaces by wavelet expansions.


However, due to the fact that our Sobolev spaces $V_{\beta,\delta}^{l,p}(K)$ have mixed weights now -- measuring the distance to the vertex and the edges, respectively -- the analysis becomes more delicate compared to \cite[Thm.~1]{Han15}. There the embeddings were studied in the setting of Kondratiev spaces, which only involve one weight function.  Moreover, we now invoke the extension operator from \cite{SS20}, cf. Theorem \ref{extop}, which improves the results for the upper bound of the smoothness in the Besov spaces considerably compared to \cite[Thms.~2.1,~3.1]{HS1}. \\
Finally, by a  modification of our arguments, we are also able to deal with the case of the parameter $\delta$ having negative components, which has not been considered sufficiently so far.

 
 \subsection{Embeddings with positive components of $\delta$}

 For simplicity we first deal with the case when all of the components of $\delta$ are positive. 
 

\begin{theorem}\label{theorem4}
Let $K$ be a bounded polyhedral cone in $\mathbb{R}^3$. Then we have a continuous embedding
$$V_{\beta,\delta}^{l,p}(K)\cap B_{p,p}^s(K)\hookrightarrow B_{\tau,\tau}^r(K),\qquad  r<\min\Big\{l,3(l-|\delta|), 3s\Big\}, \quad \frac{1}{\tau}=\frac{r}{3}+\frac{1}{p}$$
where $1<p<\infty$, 
$\delta=(\delta_1,...,\delta_n)\in \mathbb{R}^n$ with $\delta_i>0$ for all $i=1,...,n$, $l\in\mathbb{N}_0$, $  \beta \in \mathbb{R}$ and $ l>\beta$.
\end{theorem}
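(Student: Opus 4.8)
The plan is to combine a pointwise wavelet-coefficient estimate derived from the Sobolev embedding in weighted spaces with the wavelet characterization of Besov spaces from Section~\ref{besovchar}. First, using the extension operator $\mathfrak{E}$ from Theorem~\ref{extop}, we may assume $u\in V_{\beta,\delta}^{l,p}(\mathbb R^3,S)$, so all local estimates can be carried out on $\mathbb R^3$ and then restricted to $K$ at the end. Since $B^r_{\tau,\tau}(K)$ is characterized by decay of wavelet coefficients, it suffices to bound
$$
\left(\sum_{j\ge 0} 2^{j(r+3(\frac12-\frac1\tau))\tau}\Big(\sum_{(I,\psi)\in D_j\times\Psi'}|\langle u,\psi_I\rangle|^\tau\Big)\right)^{1/\tau},
$$
noting that with $\frac1\tau=\frac r3+\frac1p$ one has $r+3(\frac12-\frac1\tau)=3(\frac12-\frac1p)$, so the dyadic weight is exactly the one appearing in the $B^s_{p,p}$-sum when $r$ is replaced by the ``trivial'' exponent; this is the same algebraic trick as in \cite[Thm.~1]{Han15}.

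The key step is to split the dyadic cubes $I\in D_j$ according to their distance to the singularity set $S=\{0\}\cup M_1\cup\dots\cup M_n$. For cubes ``far'' from $S$ (at scale comparable to $2^{-j}$), the weights $\rho_0(x)$ and $r_k(x)$ are essentially constant on $Q(I)$, so a local Sobolev/Whitney-type estimate — using $l$ derivatives controlled in $L_p$ with the weight frozen — gives a bound on $|\langle u,\psi_I\rangle|$ in terms of the weighted $L_p$-norm of $\partial^\alpha u$ over $Q(I)$, $|\alpha|=l$, together with a favorable power of $2^{-j}$ (this yields the constraint $r<l$). For cubes near the edges or the vertex one has to count: at scale $2^{-j}$ there are $\sim 2^{j}$ cubes near a fixed edge and $O(1)$ near the vertex, and on those cubes the weight $(r_k/\rho_0)^{p(\delta_k-l)}\rho_0^{p(\beta-l)}$ (from the lowest-order, $|\alpha|=0$, term, or more carefully an interpolated version) degenerates; balancing the number of such cubes against the size of the coefficients, and invoking $\delta_k>0$ and $l>\beta$ so that the relevant integrals converge, produces the constraints $r<3(l-|\delta|)$ (the edge/vertex count, dominated by the edge contribution since each of the $n$ edges contributes a term involving $\delta_k$) and $r<3s$ (using the a priori membership $u\in B^s_{p,p}(K)=H^s$-type control to absorb the worst cubes right at $S$, exactly where the weighted norm alone is too weak). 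Summing the $\ell_\tau$-contributions over $j$ and over the two regimes, using Hölder's inequality to pass from $\ell_p$ to $\ell_\tau$ (legitimate since $\tau<p$) and the embedding $\ell_p\hookrightarrow\ell_\tau$ on the coefficient sequences, closes the estimate.

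I expect the main obstacle to be the near-singularity bookkeeping: one must carefully interpolate between the top-order term ($|\alpha|=l$, which controls smoothness but carries weight $\rho_0^{0}(r_k/\rho_0)^{0}$) and the zero-order term (which carries the full singular weight), choosing the right fractional order so that the power of $2^{-j}$ from the Sobolev estimate, the number $\sim 2^{j}$ of edge-cubes at scale $2^{-j}$, and the $\tau$-th power in the Besov sum combine to a geometrically summable series precisely under $r<3(l-|\delta|)$; getting the exponent of $|\delta|=\sum_k\delta_k$ (as opposed to $\max_k\delta_k$ or similar) right is the delicate point, and is where the mixed-weight structure makes the argument genuinely harder than in \cite{Han15}. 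A secondary technical point is handling the cubes that straddle both an edge and the vertex, where $r_k$ and $\rho_0$ are comparable; there the $V$-space embeddings listed in the Remark after the definition (reducing $(l,\beta,\delta)$ to $(0,\beta-l,\delta-l)$) let one reduce to an $L_p$-estimate with an explicit power weight, which can be integrated directly.
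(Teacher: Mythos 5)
Your proposal follows essentially the same route as the paper: extension to $\mathbb{R}^3$, the wavelet characterization with the algebraic identity reducing the dyadic weight to $|I|^{(\frac1p-\frac12)\tau}$, a Whitney/moment-condition estimate for the coefficients, a decomposition of the dyadic cubes by distance to the vertex and to the edges with cube counts $O(1)$ and $O(2^j)$ respectively, H\"older's inequality to pass to $\ell_\tau$, and the a priori $B^s_{p,p}$ regularity to absorb the cubes touching $S$ --- yielding exactly the three constraints $r<l$, $r<3(l-|\delta|)$ and $r<3s$. The bookkeeping you flag as the main obstacle is handled in the paper by the double shell decomposition $\Lambda_{j,k,m}$ (distance $\sim k2^{-j}$ to the vertex, $\sim m2^{-j}$ to the nearest edge) with cardinalities $|\Lambda_{j,k}|\lesssim k^2$ and $|\Lambda_{j,k,m}|\lesssim m$ followed by a case analysis on the exponents of $k$ and $m$; no interpolation between derivative orders is needed, since only the top-order term $|\alpha|=l$ of the $V$-norm enters the Whitney estimate, with the weights frozen on $Q(I)$ for cubes not meeting $S$.
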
 

\begin{proof}
 For $r=0$ the result is trivial, thus we assume $r>0$ and hence $0<\tau<p$.\\
Since $K$ is a Lipschitz domain, we can extend $u\in B_{p,p}^s(K)$ to some function $\tilde{u}=E u\in B_{p,p}^s(\mathbb{R}^3)$. In the corresponding wavelet characterization of $\tilde{u}$, the first term $P_0 \tilde{u}$ can be represented according to Subsection \ref{besovchar} as
$$P_0 \tilde{u}= \displaystyle\sum_{k\in\mathbb{Z}^3} \langle \tilde{u} ,\Phi (\cdot-k) \rangle \Phi (\cdot-k).$$ 
 Since $\Phi$ shares the same smoothness and support properties as the wavelets $\Psi_I$ for $|I|=1$, the coefficients $\langle \tilde{u} ,\Phi (\cdot-k) \rangle$ can be treated like the coefficients $\langle \tilde{u} ,\psi_I \rangle$ (moment conditions of $\psi_I$ are only relevant for $|I|<1$ in the calculations below). Thus we only have to show that
 \begin{equation}
 \label{normabecsles}
 \bigg( \displaystyle \sum_{(I,\psi)\in \Lambda} |I|^{(\frac{1}{p}-\frac{1}{2})\tau}  
 |\langle \tilde{u} ,\psi_I \rangle|^{\tau}
 \bigg)^{1/\tau} \lesssim \: \max \{\|u| V_{\beta,\delta}^{l,p}(K)\|,\: \|u| B_{p,p}^{s}(K)\|   \}.
 \end{equation}
 \textit{Step 1}
 We give a splitting of the indices of $\Lambda$ which we treat incrementally in \textit{Steps 2-4} below. Let\\
 \begin{align}
\label{lambda}
 \Lambda_j&=\{(I,\psi)\in \Lambda,\: |I|=2^{-j3}\},\notag\\
 \Lambda_{j,k}&=\{(I,\psi)\in \Lambda_j,\: k2^{-j}\leq \rho_I<(k+1)2^{-j} \},\: k\in \mathbb{N}_0,\notag\\
  \Lambda_{j,k,m}&=\{(I,\psi)\in \Lambda_{j,k},\: m2^{-j}\leq r_I<(m+1)2^{-j} \},\: m\in \mathbb{N}_0,
 \end{align}
where $ \rho_I $ and $r_I$ are defined as
\begin{align*}
\rho_I&=\mathrm{dist}(Q(I),0)\\
&=\displaystyle\inf_{x\in Q(I)}\rho_0(x)\sim \rho_0(x)\sim k2^{-j} \quad \text{for}\: \: x\in I \in \Lambda_{j,k}\quad \text{with}\:\: k\in \mathbb{N},
\end{align*}
and
\begin{align*}
r_I=\displaystyle\min_{k=1,...,n} \min_{x\in Q(I)} r_k(x).
 \end{align*}
 Here $Q(I)$ is a dyadic cube of minimal size such that $\text{supp}\: \psi_I\subset Q(I)$ for all $\psi\in \Psi'$, using the notations of Subsection \ref{besovchar}.\\
 In the proof we use the fact that for the cardinality of $\Lambda_{j,k}$ and $\Lambda_{j,k,m}$ it holds that 
 \begin{align}
 \label{kbecsles}
 |\Lambda_{j,k}|\lesssim k^2,
 \end{align}
 and
 \begin{align}
 \label{mbecsles}
 |\Lambda_{j,k,m}|\lesssim m.
 \end{align}
The estimate \eqref{kbecsles} is obtained in the following way: If we have a vertex singularity, then 
$$\text{Vol}(\Lambda_{j,k})\sim(k2^{-j})^2 2^{-j}=k^2 2^{-j3}=k^2|I|,$$
thus, $|\Lambda_{j,k}|\lesssim k^2$.\\
The estimate \eqref{mbecsles} we get from the following observation:
\begin{align*}
\text{Vol}(\Lambda_{j,k,m})\sim(m2^{-j}) 2^{-j} 2^{-j}= m 2^{-j3}= m|I|,
\end{align*}
hence $|\Lambda_{j,k,m}|\lesssim m.$\\ 
 
We proceed with the following strategy. In what follows we will give individual estimates for the following sets of wavelets. \\
In \textit{Step 2} we consider the set $\Lambda_j^0:=\displaystyle\bigcup_{k\geq 1}\bigcup_{m\geq 1}\Lambda_{j,k,m}$, i.e., all of the wavelets except the ones whose supports are close to the vertex $0$ and the edges $M_j$, but we consider also the wavelets whose supports intersect the smooth part of the surface of the cone.\\
Afterwards, in \textit{Step 3} we consider the sets $\Lambda_{j,0}$, i.e., the wavelets with support near the vertex $0$.\\
Finally, in \textit{Step 4} we deal with the set $ \Lambda\setminus \big\{\bigcup_j(\Lambda_j^0\cup \Lambda_{j,0})\big\}$, i.e., the wavelets with support near the edges $M_j$. We denote this set by $\Lambda_{\text{edges} \smallsetminus \text{vertex}}$.\\
 Since in \textit{Step 2} we consider also the wavelets with support intersecting the smooth part of the boundary of the cone (the faces), we make use of Stein's extension operator for the spaces $V_{\beta,\delta}^{l,p}(K)$ from Theorem \ref{extop}.\\ 
As a preparation for our calculations in \textit{Steps 2-4} we now estimate the coefficients $ \langle \tilde{u} ,\psi_I \rangle $. Let $I\in \Lambda_j^0$.
We recall the approximation result of Whitney which says that for every $I$ there exists a polynomial $P_I$ of degree less than $l$, such that
$$\|\tilde{u}-P_I|L_p(Q(I))\| \lesssim |Q(I)|^{l/3}|\tilde{u}|_{W_p^l(Q(I))}\sim |I|^{l/3} |\tilde{u}|_{W_p^l(Q(I))},$$
where $|\cdot|_{W_p^l}$ is the Sobolev seminorm,
$$|u|_{W_p^l(Q(I))}:=\left(\displaystyle \int_{Q(I)} |\nabla^l u(x)|^p\: dx   \right)^{1/p}.$$
Since $\psi_I$ satisfies moment conditions of order up to $l$, it is orthogonal to any polynomial of degree up to $l-1$. Thus, putting $|\delta|:=\delta_1+...+\delta_n$, we can estimate the coefficients with the help of Hölder's inequality for $1<p<\infty$,
\begin{align}
\label{kl}
|\langle \tilde{u} ,\psi_I \rangle|&=|\langle \tilde{u}-P_I ,\psi_I \rangle|\leq \|\tilde{u}-P_I|L_p(Q(I))\| \cdot \|\psi_I|L_{p'}(Q(I))\|\notag\\
&\leq c\: |I|^{l/3} |\tilde{u}|_{W_p^l(Q(I))} |I|^{\frac{1}{2}-\frac{1}{p}}\notag\\
&= c |I|^{\frac{l}{3}+\frac{1}{2}-\frac{1}{p}} \bigg(
\displaystyle\int_{Q(I)}\sum_{|\alpha|=l} |\partial^{\alpha}\tilde{u}(x)|^p\: dx
\bigg)^{1/p}\notag\\
&\leq c |I|^{\frac{l}{3}+\frac{1}{2}-\frac{1}{p}} \rho_I^{|\delta|-\beta} r_I^{-|\delta|} 
\underbrace{
\bigg(
\displaystyle\sum_{|\alpha|=l}\int_{Q(I)}
|x|^{p(\beta-|\delta|)} \prod_{k=1}^{n} r_k(x)^{p\delta_k}
 |\partial^{\alpha}\tilde{u}(x)|^p\: dx
\bigg)^{1/p}
}_{=:\mu_I}\notag\\
&= c |I|^{\frac{l}{3}+\frac{1}{2}-\frac{1}{p}} \rho_I^{|\delta|-\beta} r_I^{-|\delta|} \mu_I,
\end{align}
where we used that the cases $\beta>|\delta|$ and $\beta\leq |\delta|$ can be treated similarly by the following argument: One has for $I\in\Lambda_j^0$ that $I\in \Lambda_{j,k}$ for some $k\geq 1$. But then if $\beta>|\delta|$ we have $\rho_I^{-(\beta-|\delta|)}\lesssim (k2^{-j})^{-(\beta-|\delta|)}$, otherwise $\rho_I^{-(\beta-|\delta|)}\lesssim ((k+1)2^{-j})^{-(\beta-|\delta|)}$, because $k2^{-j}<\rho_I<(k+1)2^{-j}$. In particular, $\rho_I^{-(\beta-|\delta|)}\sim (k2^{-j})^{-(\beta-|\delta|)}$ for $I\in \Lambda_{j,k}$, $k\geq 1$.\\
\textit{Step 2: The set $ \Lambda_j^0$ }
Using \eqref{kl} and applying Hölder's inequality with $\frac{p}{\tau}>1$ one has the following estimate,
\begin{align}
\label{1a}
 &\displaystyle \sum_{(I,\psi)\in \Lambda_{j}^0} |I|^{(\frac{1}{p}-\frac{1}{2})\tau}  
 |\langle \tilde{u} ,\psi_I \rangle|^{\tau}\notag\\
 &\lesssim  \sum_{(I,\psi)\in \Lambda_{j}^0} |I|^{(\frac{1}{p}-\frac{1}{2})\tau} |I|^{(l/3+1/2-1/p)\tau} \rho_I^{(|\delta|-\beta)\tau} r_I^{-|\delta|\tau} \mu_I^{\tau}\notag\\
 &\lesssim
 \underbrace{  \bigg( 
  \sum_{(I,\psi)\in \Lambda_{j}^0} \big(
  |I|^{l \tau /3} \rho_I^{(|\delta|-\beta)\tau} r_I^{-|\delta|\tau}
  \big)^\frac{p}{p-\tau}
   \bigg)^{\frac{p-\tau}{p}} }_{=:I}
\underbrace{ \bigg(     
 \sum_{(I,\psi)\in \Lambda_{j}^0}\mu_I^p
  \bigg)^{\frac{\tau}{p}}}_{=:II}.
\end{align}
The second term II can be estimated from above using the fact that there is a controlled overlap between the cubes $Q(I)$, thus every $x \in K$ is contained in a finite number of cubes $Q(I)$. This gives
\begin{align*}
\bigg(     
 \sum_{(I,\psi)\in \Lambda_{j}^0}\mu_I^p
  \bigg)^{\frac{1}{p}}&=\bigg(\sum_{(I,\psi)\in \Lambda_{j}^0}
  \displaystyle\sum_{|\alpha|=l}\int_{Q(I)}
  |x|^{p(\beta-|\delta|)} \prod_{k=1}^{n} r_k(x)^{p\delta_k}
   |\partial^{\alpha}\tilde{u}(x)|^p\: dx
    \bigg)^{\frac{1}{p}}\\
  &\lesssim 
  \bigg(
    \displaystyle\sum_{|\alpha|=l}\int_{K}
    |x|^{p(\beta-|\delta|)} \prod_{k=1}^{n} r_k(x)^{p\delta_k}
     |\partial^{\alpha}\tilde{u}(x)|^p\: dx
      \bigg)^{\frac{1}{p}}\\
  &\leq \|u|V_{\beta,\delta}^{l,p}(K)\|.
\end{align*}
For the first term I using $|\Lambda_{j,k}|\lesssim k^2  $ and $ |\Lambda_{j,k,m}|\lesssim m  $ one obtains
\begin{align}
\label{csillag}
 I&= \bigg( 
  \sum_{(I,\psi)\in \Lambda_{j}^0} \big(
  |I|^{l \tau /3} \rho_I^{(|\delta|-\beta)\tau} r_I^{-|\delta|\tau}
  \big)^\frac{p}{p-\tau}
   \bigg)^{\frac{p-\tau}{p}}\notag \\
   &\lesssim
    \bigg( 
     \sum_{k=1}^{2^j} \sum_{m=1}^{ck}
     \sum_{(I,\psi)\in \Lambda_{j,k,m}}\big(
     |I|^{l \tau /3} \rho_I^{(|\delta|-\beta)\tau} r_I^{-|\delta|\tau}
     \big)^\frac{p}{p-\tau}
      \bigg)^{\frac{p-\tau}{p}}\notag\\
      &\lesssim
       \bigg( 
           \sum_{k=1}^{2^j} \sum_{m=1}^{ck}
           m \big(
           |I|^{l \tau /3} 
           (k2^{-j})^{(|\delta|-\beta)\tau} (m2^{-j})^{-|\delta|\tau}
           \big)^\frac{p}{p-\tau}
            \bigg)^{\frac{p-\tau}{p}}\notag\\
           &=
                   \bigg( 
                       \sum_{k=1}^{2^j} \sum_{m=1}^{ck}
                       m \big(
                       2^{-jl\tau-j(|\delta|-\beta)\tau+j|\delta|\tau}k^{(|\delta|-\beta)\tau} m^{-|\delta|\tau}
                       \big)^\frac{p}{p-\tau}
                        \bigg)^{\frac{p-\tau}{p}}\notag\\
&=        \bigg( 
                       \sum_{k=1}^{2^j} k^{(|\delta|-\beta)\tau \frac{p}{p-\tau}}2^{-j\tau(l-\beta)\frac{p}{p-\tau}}
                        \bigg(
                       \sum_{m=1}^{ck} m^{1-|\delta|\tau \frac{p}{p-\tau}}
                       \bigg)
                        \bigg)^{\frac{p-\tau}{p}}=:\circledast.     
\end{align}
Subject to the exponents of $k$ and $m$ in the sums above (i.e., distinguishing between exponents $\alpha<-1$, $\alpha=-1$, $\alpha>-1$), we have three times three cases. We first consider the cases according to $m$ using
\begin{align}
\label{becsles}
\displaystyle\sum_{m=1}^{ck}m^{\alpha} \sim \begin{cases}
     k^{\alpha+1}  &\mbox{if } \alpha>-1,\\
    1   & \mbox{if } \alpha<-1, \\
       \log (k+1) &\mbox{if } \alpha=-1,
   \end{cases}
\end{align}
 and then the subcases according to $k$.\\
\textit{Case 1} Exponent of $m>-1$:
\begin{align*}
1-|\delta|\tau \frac{p}{p-\tau}>-1\quad\iff\quad 2>|\delta|\tau \frac{p}{p-\tau}\quad\iff\quad \frac{2}{3}r>|\delta|,
\end{align*}
where we used that $\frac{1}{\tau}=\frac{r}{3}+\frac{1}{p}$, i.e., $r=3\big(\frac{p-\tau}{p\tau} \big)$.
In \eqref{csillag} we obtain
\begin{align}
\label{kell}
\circledast&\lesssim 2^{-j\tau(l-\beta)}
\bigg( 
                       \sum_{k=1}^{2^j} k^{(|\delta|-\beta)\tau \frac{p}{p-\tau}}
                        k^{1-|\delta|\tau \frac{p}{p-\tau}+1}
                        \bigg)^{\frac{p-\tau}{p}}\notag\\
&=2^{-j\tau(l-\beta)}
\bigg( 
                       \sum_{k=1}^{2^j} k^{2-\beta\tau\frac{p}{p-\tau}}
                        \bigg)^{\frac{p-\tau}{p}}\notag\\
       &=2^{-j\tau(l-\beta)}
\bigg( 
                                               \sum_{k=1}^{2^j} k^{2
                -\frac{3\beta}{r}}
                                                \bigg)^{\frac{p-\tau}{p}}.
\end{align}
\textit{Case 2} Exponent of $m<-1$:
\begin{align*}
1-|\delta|\tau \frac{p}{p-\tau}<-1 \quad\iff\quad \frac{2}{3}r<|\delta|
\end{align*}
Then,
\begin{align*}
\displaystyle \sum_{m=1}^{k} m^{1-|\delta|\tau \frac{p}{p-\tau}}\lesssim 1,
\end{align*}
which inserted in \eqref{csillag} yields
\begin{align}
\label{4}
\circledast&\lesssim 2^{-j\tau(l-\beta)}
\bigg( 
                       \sum_{k=1}^{2^j} k^{(|\delta|-\beta)\tau\frac{p}{p-\tau}}
                        \bigg)^{\frac{p-\tau}{p}}\notag\\
                        &=2^{-j\tau(l-\beta)}
                        \bigg( 
                                               \sum_{k=1}^{2^j} k^{(|\delta|-\beta)\frac{3}{r}}
                                                \bigg)^{\frac{p-\tau}{p}}.
\end{align}
\textit{Case 3} Exponent of $m=-1$:
\begin{align*}
1-|\delta|\tau \frac{p}{p-\tau}=-1 \Longleftrightarrow \frac{2}{3}r=|\delta|
\end{align*}
Then,
\begin{align*}
\circledast\lesssim 2^{-j\tau(l-\beta)}
\bigg( 
                       \sum_{k=1}^{2^j} k^{(|\delta|-\beta)\frac{3}{r}} \log(k+1)
                        \bigg)^{\frac{p-\tau}{p}}.
\end{align*}
 
We consider the subcases of \textit{Case 1}. The exponent of $k$ in \eqref{kell} can be greater, smaller and equal to $-1$, which gives us three subcases, $r>\beta,\: r<\beta,\: r=\beta$ with consideration $2-\frac{3\beta}{r}<-1\Longleftrightarrow \frac{\beta}{r}>1\Longleftrightarrow \beta>r$. 
Using the calculations
\begin{align}
\label{becslesk}
\displaystyle\sum_{k=1}^{2^j}k^{\beta} \sim \begin{cases}
     (2^j)^{\beta+1}  &\mbox{if } \beta>-1,\\
    1   & \mbox{if } \beta<-1, \\
       j+1 &\mbox{if } \beta=-1,
   \end{cases}
\end{align}
one obtains from \eqref{csillag} and \eqref{kell},
\begin{align*}
 &\bigg( 
  \sum_{(I,\psi)\in \Lambda_{j}^0} \big(
  |I|^{l \tau /3} \rho_I^{(|\delta|-\beta)\tau} r_I^{-|\delta|\tau}
  \big)^\frac{p}{p-\tau}
   \bigg)^{\frac{p-\tau}{p}} \\
   &\lesssim 2^{-j\tau(l-\beta)} \begin{cases}
    2^{j(r-\beta)\tau} &\mbox{if } r>\beta \quad\text{Case 1.1}\\
    1   & \mbox{if } r<\beta \quad\text{Case 1.2}\\
      (j+1)^{\frac{p-\tau}{p}} & \mbox{if } r=\beta \quad\text{Case 1.3}.
   \end{cases}
\end{align*}
Using a similar argument for \textit{Case 2} ($ \frac{2}{3}r<|\delta| $) we see that
\begin{align*}
 &\bigg( 
  \sum_{(I,\psi)\in \Lambda_{j}^0} \big(
  |I|^{l \tau /3} \rho_I^{(|\delta|-\beta)\tau} r_I^{-|\delta|\tau}
  \big)^\frac{p}{p-\tau}
   \bigg)^{\frac{p-\tau}{p}} \\
   &\lesssim 2^{-j\tau(l-\beta)} \begin{cases}
    2^{j(\frac{p-\tau}{p}+(|\delta|-\beta)\tau)} &\mbox{if } r>3(\beta-|\delta|) \quad\text{Case 2.1} \\
    1   & \mbox{if }  r<3(\beta-|\delta|)\quad\text{Case 2.2}\\
   (j+1)^{\frac{p-\tau}{p}} & \mbox{if } r=3(\beta-|\delta|) \quad\text{Case 2.3},
   \end{cases}
\end{align*}
where we considered again the exponent of $k$, $(|\delta|-\beta)\frac{3}{r}<-1\Longleftrightarrow r<3(\beta-|\delta|)$.\\
And for the \textit{Case 3} ($ \frac{2}{3}r=|\delta| $) we have 
\begin{align*}
 &\bigg( 
  \sum_{(I,\psi)\in \Lambda_{j}^0} \big(
  |I|^{l \tau /3} \rho_I^{(|\delta|-\beta)\tau} r_I^{-|\delta|\tau}
  \big)^\frac{p}{p-\tau}
   \bigg)^{\frac{p-\tau}{p}} \\
   &\lesssim 2^{-j\tau(l-\beta)}(j+1)^{\frac{p-\tau}{p}} \begin{cases}
    2^{j\left((|\delta|-\beta)\tau+\frac{p-\tau}{p}\right)} &\mbox{if } r>\beta \quad\text{Case 3.1} \\
    1   & \mbox{if }r<\beta \quad\text{Case 3.2}\\
      (j+1)^{\frac{p-\tau}{p}} & \mbox{if }  r=\beta \quad\text{Case 3.3}.
   \end{cases}
\end{align*}
Now we put $\Lambda^0=\displaystyle\bigcup_{j\geq 0}\Lambda_j^0$ and summarize the results from above according to the 9 cases. Summing over $j$ from $0,...,\infty$ finally yields\\
\textit{(1.1)}
\begin{align*}
 \displaystyle \sum_{(I,\psi)\in \Lambda^0} |I|^{(\frac{1}{p}-\frac{1}{2})\tau}  
 |\langle \tilde{u} ,\psi_I \rangle|^{\tau}
&\lesssim \displaystyle\sum_{j\geq 0} 2^{-j\tau(l-\beta)+j3\frac{p-\tau}{p}-j\beta\tau}\|u|V_{\beta,\delta}^{l,p}(K)\|^{\tau} \\
 &\lesssim\|u|V_{\beta,\delta}^{l,p}(K)\|^{\tau},
 \end{align*}
 if $\tau l>3\frac{p-\tau}{p}\Longleftrightarrow l>r.$\\
\textit{(1.2)}
\begin{align*}
 \displaystyle \sum_{(I,\psi)\in \Lambda^0} |I|^{(\frac{1}{p}-\frac{1}{2})\tau}  
 |\langle \tilde{u} ,\psi_I \rangle|^{\tau}&\lesssim \displaystyle\sum_{j\geq 0} 2^{-j\tau(l-\beta)}\|u|V_{\beta,\delta}^{l,p}(K)\|^{\tau} \\
 &\lesssim\|u|V_{\beta,\delta}^{l,p}(K)\|^{\tau},
 \end{align*}
 if $l-\beta>0$.
\\
\textit{(1.3)}
Summing over $j$ incorporates Case 1.3 in Case 1.2 with the following calculation:
\begin{align*}
\displaystyle\sum_{j\geq 0}2^{-j(l-\beta)\tau}(\log 2^j+1)^{\frac{p-\tau}{p
}}\lesssim \sum_{j\geq 0} 2^{-j(l-\beta-\varepsilon)\tau},
\end{align*}
because 
$$ (\log 2^j+1)^{\frac{p-\tau}{p
}} \leq (j+1)^{\frac{p-\tau}{p
}} \leq (2^j)^{\varepsilon \tau} $$
for $j\geq j_0$ and $\varepsilon>0$ small by the following argument: One has for $\varepsilon>0$, that
\begin{align*}
\displaystyle\lim_{x\rightarrow \infty} \frac{\ln x}{x^{\varepsilon}}=0.
\end{align*}
Thus, $\ln x \leq x^{\varepsilon}$ for $x\geq x_0$ large enough. Hence, 
$$\ln j \leq j^{\varepsilon} \:\: \text{for} \:\: j\geq j_0,$$
and then 
$$j\leq 2^{j\varepsilon} \:\: \text{for} \:\: j\geq j_0 \:\: \text{and} \:\: \varepsilon \:\: \text{sufficiently small.} \:\: $$
Since $j\leq j+1\leq 2j$ for all $j\geq 1$, we have
\begin{align*}
j+1&<2j  \leq 2\cdot 2^{j\varepsilon}\\
j+1 &\lesssim 2^{j\varepsilon}\\
(j+1)^{\frac{p-\tau}{p
}} & \lesssim 2^{j\varepsilon\frac{p-\tau}{p
} }=: 2^{j\varepsilon'} \:\: \text{with}\:\: \varepsilon':=\varepsilon\frac{p-\tau}{p
}.
\end{align*}
Thus, if we replace $\varepsilon$ by $\varepsilon\tau$, we have
$$(j+1)^{\frac{p-\tau}{p
}}\leq 2^{j\varepsilon \tau} .$$
And hence,
$$\displaystyle\sum_{j\geq 0} 2^{-j(l-\beta-\varepsilon)\tau}<\infty $$
if 
$$l-\beta-\varepsilon>0 \quad\iff\quad l>\beta-\varepsilon,$$
thus, $l>\beta$ is sufficient.\\
\textit{(2.1)}
\begin{align*}
 \displaystyle \sum_{(I,\psi)\in \Lambda^0} |I|^{(\frac{1}{p}-\frac{1}{2})\tau}  
 |\langle \tilde{u} ,\psi_I \rangle|^{\tau}&\lesssim \displaystyle\sum_{j\geq 0} 2^{-j\tau(l-\beta)+j\frac{p-\tau}{p}+j(|\delta|-\beta)\tau}\|u|V_{\beta,\delta}^{l,p}(K)\|^{\tau} \\
 &\lesssim\|u|V_{\beta,\delta}^{l,p}(K)\|^{\tau},
 \end{align*}
 if $\tau l-|\delta|\tau -\frac{p-\tau}{p}>0 \iff l-|\delta|>\frac{r}{3}.$
\\
\textit{(2.2)} This case gives the same condition as \textit{(1.2)}. \\
\textit{(2.3)} Summing over $j$ incorporates Case 2.3 in Case 2.2 as before.\\
\textit{(3.1)-(3.3)} With the following consideration summing over $j$ incorporates Case 3 in Case 2:
\begin{align*}
& 2^{-j\tau(l-\beta)}
\bigg( 
                       \sum_{k=1}^{2^j} k^{(|\delta|-\beta)\frac{3}{r}} \log(k+1)
                        \bigg)^{\frac{p-\tau}{p}}\\
   & \lesssim 2^{-j\tau(l-\beta)} (j+1)^{\frac{p-\tau}{p}} \bigg( 
                          \sum_{k=1}^{2^j} k^{(|\delta|-\beta)\frac{3}{r}} 
                           \bigg)^{\frac{p-\tau}{p}}\\
                           & \lesssim 2^{-j\tau(l-\beta-\varepsilon')}\bigg( 
                                                     \sum_{k=1}^{2^j} k^{(|\delta|-\beta)\frac{3}{r}} 
                                                      \bigg)^{\frac{p-\tau}{p}},
\end{align*}
where we used again that 
$$(j+1)^{\frac{p-\tau}{p}}\lesssim 2^{j\varepsilon} \:\:\text{for}\: \: j\geq j_0,$$ 
thus, we arrive at Case 2.\\
\textit{Step 3: The set $\Lambda_{j,0}$} 
We use for the estimate that the dimension of the vertex $\overline{\delta}=0$, hence $|\Lambda_{j,0} |\lesssim 2^{j\overline{\delta}}\sim 1$. We leave $\overline{\delta}$ in the calculations, because then one can understand better \textit{Step 4}, where we make the same calculations with $\overline{\delta}=1$. Now we recall that $\tilde{u}\in B_{p,p}^s(\mathbb{R}^3)$. An application of Hölder's inequality yields
\begin{align*}
\displaystyle \sum_{(I,\psi)\in \Lambda_{j,0}} |I|^{(\frac{1}{p}-\frac{1}{2})\tau}  
 |\langle \tilde{u} ,\psi_I \rangle|^{\tau}&\lesssim 2^{j\overline{\delta}\frac{p-\tau}{p}}   \bigg(  \sum_{(I,\psi)\in \Lambda_{j,0}} 2^{-j3(\frac{1}{p}-\frac{1}{2})p } |\langle \tilde{u} ,\psi_I \rangle|^p  \bigg)^{\tau/p}\\
 &=2^{j\overline{\delta}\frac{p-\tau}{p}} 2^{-js\tau}
 \bigg(  \sum_{(I,\psi)\in \Lambda_{j,0}} 2^{j(s+\frac{3}{2}-\frac{3}{p})p}  |\langle \tilde{u} ,\psi_I \rangle|^p  \bigg)^{\tau/p}.
\end{align*}
Summing over $j$ and using Hölder's inequality again one has
\begin{align}
\label{dec9}
\displaystyle\sum_{j\geq 0} \sum_{(I,\psi)\in \Lambda_{j,0}} |I|^{(\frac{1}{p}-\frac{1}{2})\tau}  
 |\langle \tilde{u} ,\psi_I \rangle|^{\tau} &\lesssim \sum_{j\geq 0} 2^{j\overline{\delta}\frac{p-\tau}{p}} 2^{-js\tau} \bigg(  \sum_{(I,\psi)\in \Lambda_{j,0}} 2^{j(s+\frac{3}{2}-\frac{3}{p})p} |\langle \tilde{u} ,\psi_I \rangle|^p  \bigg)^{\tau/p} \notag\\
 &\lesssim \bigg( \sum_{j\geq 0} 2^{-js\frac{p\tau}{p-\tau}}2^{j\overline{\delta}} \bigg)^{\frac{p-\tau}{p}} \bigg(  
 \sum_{j\geq 0} \sum_{(I,\psi)\in \Lambda_{j,0}}
 2^{j(s+\frac{3}{2}-\frac{3}{p})p} |\langle \tilde{u} ,\psi_I \rangle|^p
   \bigg)^{\tau/p} \notag\\
 &\lesssim \|\tilde{u}|B_{p,p}^s(\mathbb{R}^3)\|^{\tau} \notag\\
 &\lesssim \|u|B_{p,p}^s(K)\|^{\tau},
\end{align}
under the condition
$$0=\overline{\delta}<\frac{sp\tau}{p-\tau}\Longleftrightarrow 0<s\frac{3}{r},$$
which is always satisfied since $s>0$ and $r>0$.\\
\textit{Step 4: The set $\Lambda_{\text{edges} \smallsetminus \text{vertex}}$}  
This case is very similar to \textit{Step 3}, but now close to the edges the dimension of the singular set is one, i.e. $\overline{\delta}=1$. Therefore, $|\Lambda_{\text{edges} \smallsetminus \text{vertex}}|\lesssim 2^{j\overline{\delta}}=2^j.$ The calculation is the same as in \textit{Step 3}, the only differences are that $\overline{\delta}=1$ and the sum $ \sum_{(I,\psi)\in \Lambda_{j,0}}$ is replaced by $ \sum_{(I,\psi)\in \Lambda_{\text{edges} \smallsetminus \text{vertex}} }$. Hence, the new condition is
$$1=\tilde{\delta}	<\frac{sp \tau}{p-\tau}=\frac{3s}{r}\Longleftrightarrow r<3s. $$

In conclusion, Steps 2-4 show that under the given restrictions on the parameters we have \eqref{normabecsles} with constant independent of $u$ which completes the proof.
\end{proof}

\subsection{Embeddings with negative components of $\delta$}

By  some refinements in the proof of 
Theorem \ref{theorem4}, we can extend the previous results to parameters   $\delta$ with   negative components. 

\begin{theorem} \label{theorem5}
 Let $K$ be a bounded polyhedral cone in $\mathbb{R}^3$. Then we have a continuous embedding
$$V_{\beta,\delta}^{l,p}(K)\cap B_{p,p}^s(K)\hookrightarrow B_{\tau,\tau}^r(K),\quad \frac{1}{\tau}=\frac{r}{3}+\frac{1}{p},\quad 1<p<\infty,$$
where $\delta=(\delta_1,...,\delta_n)\in \mathbb{R}^n$ with at least one negative component, $l\in\mathbb{N}_0$, $  \beta \in \mathbb{R}$, $l>\beta$, $r<3s$ and $$  r<\min \left\{  |\delta|,\frac{3}{2}(\beta-|\delta|^+) \right\}, $$ or $$ \frac{3}{2}(\beta-|\delta|^+)<r<\min \left\{   |\delta|,\frac{3}{2}(l-|\delta|^+)\right\} , $$
or
\begin{align*}
 \max \bigg\{\frac{3}{2}(l-|\delta|^+),\frac{3}{2}(\beta-|\delta|^+), |\delta|, \frac{3}{4}\beta  \bigg\}<r<\min \bigg\{ \frac{3}{2}|\delta|^+, \frac{3}{4}l\bigg\},
\end{align*}
or
\begin{align*}
\max \bigg\{  |\delta|, \frac{3}{4}\beta  \bigg\}<r<\min \bigg\{\frac{3}{2}|\delta|^+, \frac{3}{4}l,\frac{3}{2}(\beta-|\delta|^+)  \bigg\},
\end{align*}
or
\begin{align*}
 \max \bigg\{ |\delta|, \frac{3}{2}(\beta-|\delta|^+)  \bigg\}<r<\min \bigg\{\frac{3}{2}|\delta|^+, \frac{3}{4}\beta,\frac{3}{2}(l-|\delta|^+)  \bigg\},
\end{align*}
or
\begin{align*}
|\delta| <r< \min \bigg\{\frac{3}{2}|\delta|^+, \frac{3}{4}\beta,\frac{3}{2}(\beta-|\delta|^+)  \bigg\},
\end{align*}
or
$$\quad \max \left\{ \frac{3}{2}|\delta|^+, \frac{3}{4}\beta  \right\}<r<\frac{3}{4}l, $$
or
\begin{align}
\label{sept16}
\quad \frac{3}{2}|\delta|^+<r< \frac{3}{4}\beta.
\end{align}
\end{theorem}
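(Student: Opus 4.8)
The plan is to run the same four‑step machine as in the proof of Theorem \ref{theorem4}: extend $u$ to $\tilde u=Eu$ exactly as there (so that $\|\tilde u|B^{s}_{p,p}(\mathbb{R}^{3})\|\lesssim\|u|B^{s}_{p,p}(K)\|$ and Theorem \ref{extop} controls the $V$‑seminorms of $\tilde u$ on the cubes meeting the faces), expand $\tilde u$ in the Daubechies wavelet basis, and verify \eqref{normabecsles} by splitting the index set $\Lambda$ into the three families $\Lambda^{0}_{j}$ (cubes away from the vertex and the edges), $\Lambda_{j,0}$ (cubes near the vertex) and $\Lambda_{\mathrm{edges}\smallsetminus\mathrm{vertex}}$ (cubes near the edges), and treating each family with the tools already assembled --- Whitney's polynomial approximation, Hölder's inequality with exponent $p/\tau$, the cardinality bounds \eqref{kbecsles}, \eqref{mbecsles}, the geometric sums \eqref{becsles}, \eqref{becslesk}, the bounded overlap of the cubes $Q(I)$, and $\sum_{(I,\psi)\in\Lambda^{0}_{j}}\mu_{I}^{p}\lesssim\|u|V_{\beta,\delta}^{l,p}(K)\|^{p}$. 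The extension operators, the pointwise‑multiplier property, and the identification of $\sum_{I}\mu_{I}^{p}$ with the $V$‑norm all remain valid for arbitrary $\delta\in\mathbb{R}^{n}$; in particular, on $\Lambda^{0}_{j}$ every cube $Q(I)$ stays at distance $\gtrsim m2^{-j}>0$ from all edges, so even the weight $\prod_{k}r_{k}(x)^{p\delta_{k}}$ with $\delta_{k}<0$ is bounded on each $Q(I)$ and the right‑hand side above is finite by the very definition of $V_{\beta,\delta}^{l,p}$. Thus the only estimate that must be genuinely reworked is the coefficient bound \eqref{kl}.

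The sign of the components of $\delta$ enters only when one pulls the weight out of $\|\tilde u-P_{I}|L_{p}(Q(I))\|$ in \eqref{kl}: one needs an upper bound for $\sup_{Q(I)}\bigl(|x|^{-(\beta-|\delta|)}\prod_{k}r_{k}(x)^{-\delta_{k}}\bigr)$ on a cube with $\rho_{I}\sim k2^{-j}$ and $r_{I}\sim m2^{-j}$. For a component $\delta_{k}\ge0$ one uses $r_{k}(x)^{-\delta_{k}}\le r_{I}^{-\delta_{k}}$, exactly as in Theorem \ref{theorem4}. For $\delta_{k}<0$, however, $r_{k}(x)^{-\delta_{k}}=r_{k}(x)^{|\delta_{k}|}$ \emph{increases} with $r_{k}$, so it can be controlled from above only through $r_{k}(x)\le\rho_{0}(x)\sim\rho_{I}$, giving $r_{k}(x)^{|\delta_{k}|}\lesssim\rho_{I}^{|\delta_{k}|}$; a sharper version keeps track of the dyadic distance $m_{k}2^{-j}$ of $Q(I)$ to the edge $M_{k}$ (so $m\lesssim m_{k}\lesssim k$), which yields the extra gain $\prod_{\delta_{k}<0}(m_{k}/k)^{|\delta_{k}|}\le1$. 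After this step the geometric factor $\rho_{I}^{|\delta|-\beta}r_{I}^{-|\delta|}$ in \eqref{kl} gets replaced by $\rho_{I}^{|\delta|^{+}-\beta}r_{I}^{-|\delta|^{+}}$ (with $|\delta|^{+}:=\sum_{k}\max(\delta_{k},0)$), times the gain on the refined pieces, whereas the weighted local seminorm $\mu_{I}$ --- which still sums to $\|u|V_{\beta,\delta}^{l,p}(K)\|$ by bounded overlap --- keeps the true signed exponents of $\delta$.

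Feeding this into \eqref{1a}--\eqref{csillag} reduces \emph{Step 2} to controlling nested sums of the type
\[
\sum_{j\ge0}2^{-j\tau(l-\beta)}\Bigl(\sum_{k=1}^{2^{j}}k^{(|\delta|^{+}-\beta)\frac{3}{r}}\,\prod_{\delta_{k}<0}\Bigl(\sum_{m_{k}}(\cdots)\Bigr)\sum_{m=1}^{ck}m^{1-|\delta|^{+}\frac{3}{r}}\Bigr)^{\frac{p-\tau}{p}},
\]
where $\tau\frac{p}{p-\tau}=\frac{3}{r}$ and $\frac{p-\tau}{p}=\frac{r\tau}{3}$. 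One now runs through every sign configuration (each relevant exponent being $<-1$, $=-1$ or $>-1$) for the $m$‑summation, for the refining $m_{k}$‑summations near the negative edges, and for the surviving $k$‑summation, using \eqref{becsles} and \eqref{becslesk} at each level and absorbing the logarithmic factors $(j+1)^{(p-\tau)/p}$ into an arbitrarily small exponential as in cases (1.3), (2.3), (3.1)--(3.3) of Theorem \ref{theorem4}. The thresholds at which these sub‑cases switch are $r=\tfrac32|\delta|^{+}$ (for the $m$‑exponent), $r=|\delta|$ (for the exponent produced by the $m_{k}$‑summations, where the gain restores the signed $|\delta|$), and then $r=\tfrac34l$, $r=\tfrac34\beta$, $r=\tfrac32(l-|\delta|^{+})$, $r=\tfrac32(\beta-|\delta|^{+})$ for the $k$‑exponent, according to which earlier sub‑case one is in; collecting for each combination the requirement ``$\tau l$ beats the surviving exponent of $2^{j}$'' and discarding those combinations for which the $j$‑series diverges yields exactly the eight admissible parameter regimes in the statement, together with the standing hypothesis $l>\beta$. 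Finally, \emph{Steps 3 and 4} are copied verbatim from Theorem \ref{theorem4}: near the vertex ($\overline{\delta}=0$) convergence is automatic, and near the edges ($\overline{\delta}=1$) it requires $1<\tfrac{3s}{r}$, i.e. $r<3s$.

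The main obstacle is the bookkeeping in \emph{Step 2}. In Theorem \ref{theorem4} two nested geometric sums already produced nine sub‑cases, of which only ``$l>r$'' and ``$r<3(l-|\delta|)$'' survived into the final condition; the extra $m_{k}$‑summations forced by the negative components create a genuinely three‑level case tree and a correspondingly longer list of surviving conditions, and the delicate point --- the one overlooked in \cite{HS1} --- is to use the one‑sided bound $r_{k}(x)\le\rho_{I}$ (respectively its $m_{k}$‑refinement) in the direction that is simultaneously valid and sharp enough in each regime, so that no term is wrongly estimated. A subsidiary but necessary check is that the refined cardinalities $|\Lambda_{j,k,m,m_{k},\dots}|$ entering the $m_{k}$‑summations are correct; this rests on the fact that in a fixed polyhedral cone the angles between distinct edges are bounded below, so that a cube near one edge is at distance $\sim\rho_{I}$ from every other edge.
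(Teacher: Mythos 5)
Your overall strategy is the same as the paper's: keep Steps 1, 3 and 4 of the proof of Theorem \ref{theorem4} intact (extension, wavelet expansion, the vertex set $\Lambda_{j,0}$ with $\overline{\delta}=0$ and the edge set with $\overline{\delta}=1$ giving $r<3s$), and rework only the coefficient bound \eqref{kl} and the dyadic bookkeeping of Step 2 according to the signs of the components of $\delta$. The paper does exactly this, splitting $r_I$ into $r_I^+=\min_{j<L}\min_{x\in Q(I)}r_j(x)$ and $r_I^-=\max_{j\geq L}\max_{x\in Q(I)}r_j(x)$, indexing cubes by a \emph{joint} pair $(m_1,m_2)$ and using the joint cardinality $|\Lambda_{j,k,(m_1,m_2)}|\lesssim\min(m_1,m_2)$.

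However, there is a genuine gap at precisely the point where the eight parameter regimes are generated. Your main estimate for a negative component is $r_k(x)^{|\delta_k|}\lesssim\rho_I^{|\delta_k|}$, with the refined factor $\prod_{\delta_k<0}(m_k/k)^{|\delta_k|}$ immediately discarded as $\leq 1$; this replaces the paper's factor $(m_2 2^{-j})^{-\tau|\delta|^-}$ by $(k2^{-j})^{-\tau|\delta|^-}$ and collapses the double sum over $(m_1,m_2)$ to a single sum over $m_1$ with cardinality $\lesssim m_1$. Tracing this through \eqref{csillag}--\eqref{6}, in the regime $r<|\delta|$, $r<\frac{3}{2}|\delta|^+$ your version produces $\sum_k k^{(|\delta|^+-\beta)\frac{3}{r}}$, whereas the paper's joint bookkeeping produces $\sum_k k^{(|\delta|^+-\beta)\frac{3}{r}+1}$; the resulting convergence thresholds are $r<3(\beta-|\delta|^+)$ versus the stated $r<\frac{3}{2}(\beta-|\delta|^+)$, so your sketched estimates do not reproduce the conditions you claim to obtain. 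Likewise, the thresholds $\frac{3}{4}l$ and $\frac{3}{4}\beta$ in the statement arise only from the first branch of the paper's split $\sum_{m_1}\sum_{m_2\leq m_1}\min(m_1,m_2)(\cdots)\lesssim\sum_{m_1}m_1^{-|\delta|\tau\frac{p}{p-\tau}+2}\lesssim k^{3-|\delta|\tau\frac{p}{p-\tau}}$, i.e., from the interaction of the joint cardinality $\min(m_1,m_2)$ with the signed exponents; your factorized expression $\prod_{\delta_k<0}\bigl(\sum_{m_k}(\cdots)\bigr)\sum_m m^{1-|\delta|^+\frac{3}{r}}$ has no analogue of this term, and in any case a product of independent $m_k$-sums cannot carry the correct cube count, since the number of cubes with prescribed distances to several edges simultaneously does not factor. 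You correctly name all the thresholds that appear in the statement, but the derivation you outline is inconsistent with them; the missing ingredient is the paper's joint decomposition $\Lambda_{j,k,(m_1,m_2)}$ with $r_I^-$ taken as a \emph{maximum} over the negative edges and the cardinality bound $\min(m_1,m_2)$, followed by the case distinction (a)--(c) on the exponents of $m_1$ and $m_2$ and the subsequent $k$- and $j$-summations, which is where cases $(A)$, $(B)$, $(i)$--$(iv)$, $(C)$, $(D)$ and hence the eight regimes actually come from.
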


\begin{proof}
The proof is a refinement of the estimates from Theorem \ref{theorem4}. We only have to modify \textit{Steps 1} and \textit{2}, but \textit{Steps 3}, \textit{4} remain the same.\\
We consider a layer $L$ of the cone, where $\rho_I\sim k2^{-j}$ and denote the vertices derived from the intersection of the layer and the boundary of the cone by $E_1,...,E_n$. Moreover, we set $\mathrm{dist}(E_i,E_l):=C_{i,l}k2^{-j}$, where the constant $C_{i,l}$ is independent of $j$ and $k$. We illustrate the cone and the above defined layer $L$ in Figures \ref{fig:d} and \ref{fig:l}.

\begin{figure}[ht]
\centering
  \begin{minipage}[b]{0.4\textwidth}
  \def\svgwidth{150pt}
 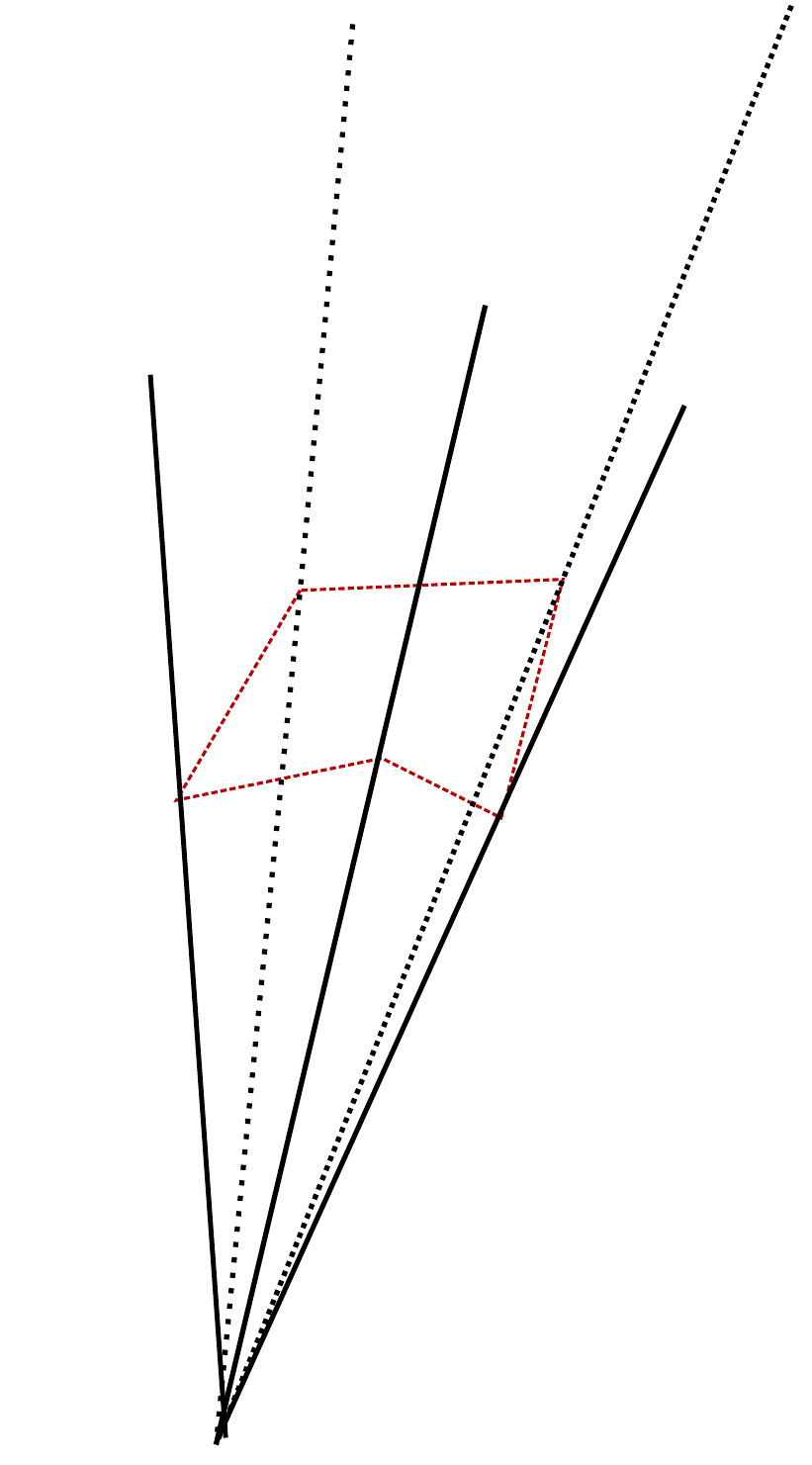
    \caption{Cone K}
    \label{fig:d}
  \end{minipage}
    \begin{minipage}[b]{0.4\textwidth}
    \def\svgwidth{150pt}
  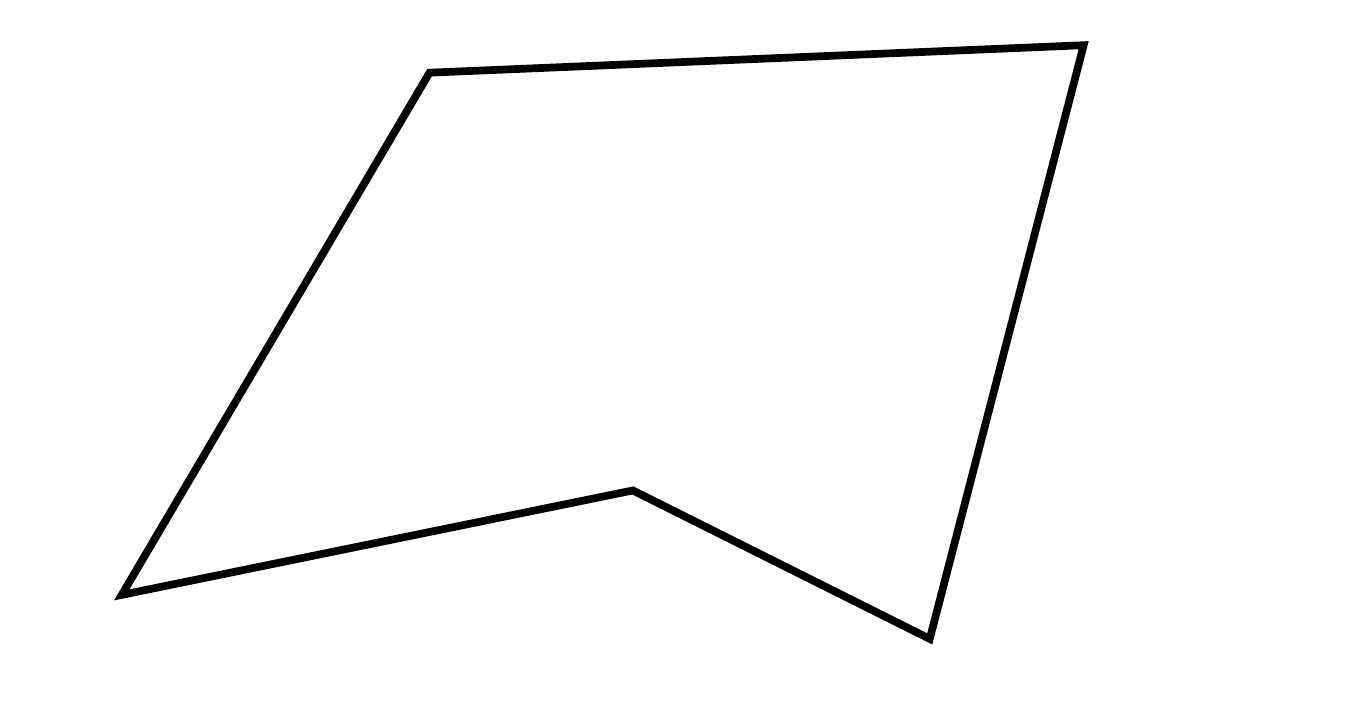
      \caption{Layer L}
      \label{fig:l}
    \end{minipage}
\end{figure}

Furthermore, we consider the slice $S$ defined as  $$S:=\{x\in L: \: \mathrm{dist}(x,E_1)\in [m2^{-j},(m+1)2^{-j}]   \},$$ 
which satisfies $\text{Vol}(S)\sim m2^{-2j}$.\\
 Without loss of generality we can assume that $\delta$ has the form 
$$\delta=(\delta_1,...,\delta_n) \:\:\text{with}\:\: \delta_1,...,\delta_{L-1}\geq 0 \:\:\text{and}\:\: \delta_L,...,\delta_n<0.$$
Moreover, we put $|\delta|=\delta_1+...+\delta_n$,  $|\delta|^+:=\delta_1+...+\delta_{L-1}$, $|\delta|^-:=\delta_L+...+\delta_n$, and split the function $r_I$ into two parts depending on the sign of the components of $\delta$:
\begin{align*}
r_I^+ & :=\displaystyle\min_{j=1,...,L-1}\min_{x\in Q(I)}r_j(x),\\
r_I^- & :=\displaystyle\max_{j=L,...,n}\max_{x\in Q(I)}r_j(x),
\end{align*}
where $\text{supp}\:Q(I)\cap L \neq \emptyset$.\\
According to the notations in \eqref{lambda} we set
\begin{align*}
\Lambda_{j,k,(m_1,m_2)}:=\{I\in \Lambda_{j,k}:\:\: & 2^{-j}m_1\leq r_I^+ \leq 2^{-j}(m_1+1),\\  
& 2^{-j}m_2\leq r_I^- \leq 2^{-j}(m_2+1)
  \},
  \end{align*}
  where $m_1,m_2 \in \mathbb{N}$.\\
Concerning the cardinality of $ \Lambda_{j,k,(m_1,m_2)} $, let $A:=\displaystyle\min_{i\neq l}C_{i,l}$ and $ B:=\displaystyle\max C_{i,l} $, where $i,l\in \{ 1,...,n\}$. By the definitions one can see that $m_1\leq Bk$ and $m_2\leq Bk$. If $m_1>Bk$ or $m_2>Bk$ no suitable $I$ exists. Thus, $\max(m_1,m_2)\leq Bk.$\\
By a similar reasoning as in Theorem \ref{theorem4} we get
\begin{align}
\label{lambdakjm}
|\Lambda_{j,k,(m_1,m_2)}|\lesssim \min(m_1,m_2),
\end{align}
since the cubes $I$ under consideration now satisfy both conditions with $m_1$ and $m_2$, respectively. Moreover, the calculations from Theorem \ref{theorem4} stay correct subject to some modifications because of $r_I^+$ and  $r_I^-$. In particular, a suitable modification of formula \eqref{kl} now yields
\begin{align*}
|\langle \tilde{u} ,\psi_I \rangle|
\leq  c |I|^{\frac{l}{3}+\frac{1}{2}-\frac{1}{p}} \rho_I^{|\delta|-\beta} (r_I^+)^{-|\delta|^+} (r_I^-)^{-|\delta|^-}\mu_I.
\end{align*}
 This together with the following calculation
 \begin{align*}
 &\left(\displaystyle\sum_{I\in \Lambda_{j,k,(m_1,m_2)}} (r_I^+)^{-\tau|\delta|^+\frac{p}{p-\tau}} (r_I^-)^{-\tau|\delta|^-\frac{p}{p-\tau}}  \right)^{\frac{p-\tau}{p}}\\
& \leq 2^{-j(-\tau)|\delta|}\bigg(
\displaystyle\sum_{I\in \Lambda_{j,k,(m_1,m_2)}} m_1^{-\tau\frac{p}{p-\tau}|\delta|^+}
m_2^{-\tau\frac{p}{p-\tau}|\delta|^-}
   \bigg)^{\frac{p-\tau}{p}}\\
&\lesssim 2^{j\tau|\delta|}m_1^{-\tau |\delta|^+}m_2^{-\tau |\delta|^-}\cdot \min(m_1,m_2)^{\frac{p-\tau}{p}},
 \end{align*}
where in the last step we used \eqref{lambdakjm}, gives the following substitute of \eqref{1a} in \textit{Step 2} of Theorem \ref{theorem4}:
\begin{align*}
 &\displaystyle \sum_{(I,\psi)\in \Lambda_{j}^0} |I|^{(\frac{1}{p}-\frac{1}{2})\tau}  
 |\langle \tilde{u} ,\psi_I \rangle|^{\tau}\\&\lesssim  \sum_{(I,\psi)\in \Lambda_{j}^0} |I|^{(\frac{1}{p}-\frac{1}{2})\tau} |I|^{(l/3+1/2-1/p)\tau} \rho_I^{(|\delta|-\beta)\tau} (r_I^+)^{-\tau |\delta|^+} (r_I^-)^{-\tau |\delta|^-} \mu_I^{\tau}\\
 &\lesssim
 \underbrace{  \bigg( 
  \sum_{(I,\psi)\in \Lambda_{j}^0} \big(
  |I|^{l \tau /3} \rho_I^{(|\delta|-\beta)\tau} (r_I^+)^{-\tau |\delta|^+} (r_I^-)^{-\tau |\delta|^-}
  \big)^\frac{p}{p-\tau}
   \bigg)^{\frac{p-\tau}{p}} }_{=:I}
\underbrace{ \bigg(     
 \sum_{(I,\psi)\in \Lambda_{j}^0}\mu_I^p
  \bigg)^{\frac{\tau}{p}}}_{=:II}
\end{align*}
The estimate for term $II$ stays as in \textit{Step 2}, but for term $I$ we now have to modify the calculations as follows: 
\begin{align}
\label{6}
 &\bigg( 
  \sum_{(I,\psi)\in \Lambda_{j}^0} \big(
  |I|^{l \tau /3} \rho_I^{(|\delta|-\beta)\tau} 
  (r_I^+)^{-\tau |\delta|^+} (r_I^-)^{-\tau |\delta|^-}
  \big)^\frac{p}{p-\tau}
   \bigg)^{\frac{p-\tau}{p}} \notag\\
   &=
    \bigg( 
     \sum_{k=1}^{2^j} \sum_{m_1=1}^{Bk} \sum_{m_2=1}^{Bk}
     \sum_{(I,\psi)\in \Lambda_{j,k,(m_1,m_2)}}\big(
     |I|^{l \tau /3} \rho_I^{(|\delta|-\beta)\tau} 
     (r_I^+)^{-\tau |\delta|^+} (r_I^-)^{-\tau |\delta|^-}
     \big)^\frac{p}{p-\tau}
      \bigg)^{\frac{p-\tau}{p}}\notag\\
      &\lesssim
       \bigg( 
           \sum_{k=1}^{2^j} \sum_{m_1=1}^{Bk} \sum_{m_2=1}^{Bk}
           \min(m_1,m_2) \big(
           |I|^{l \tau /3} 
           (k2^{-j})^{(|\delta|-\beta)\tau} 
           (m_1^{-\tau |\delta|^+} m_2^{-\tau |\delta|^-}  ) 2^{j\tau |\delta|}
           \big)^{\frac{p}{p-\tau}}
            \bigg)^{\frac{p-\tau}{p}}\notag\\
           &=
                   \bigg( 
                       \sum_{k=1}^{2^j} 
                       \sum_{m_1=1}^{Bk} \sum_{m_2=1}^{Bk}
                                  \min(m_1,m_2)
                        \big(
                       2^{-jl\tau-j(|\delta|-\beta)\tau+j|\delta|\tau}k^{(|\delta|-\beta)\tau} 
                       m_1^{-\tau |\delta|^+} m_2^{-\tau |\delta|^-}
                       \big)^\frac{p}{p-\tau}
                        \bigg)^{\frac{p-\tau}{p}}\notag\\
&=   2^{-j\tau(l-\beta)}     \bigg( 
                       \sum_{k=1}^{2^j} k^{(|\delta|-\beta)\tau \frac{p}{p-\tau}}
                        \bigg(
                       \sum_{m_1=1}^{Bk} \sum_{m_2=1}^{m_1}m_1^{-\tau |\delta|^+ \frac{p}{p-\tau}}
                       m_2^{-\tau |\delta|^- \frac{p}{p-\tau}}m_2 \notag \\
                       & \qquad +\sum_{m_1=1}^{Bk} \sum_{m_2=m_1+1}^{Bk}m_1^{-\tau |\delta|^+ \frac{p}{p-\tau}}
                                              m_2^{-\tau |\delta|^- \frac{p}{p-\tau}}m_1  \bigg)
                        \bigg)^{\frac{p-\tau}{p}}\notag\\
                        & \lesssim 2^{-j\tau(l-\beta)\frac{p}{p-\tau}} 
      \bigg( 
                            \sum_{k=1}^{2^j} k^{(|\delta|-\beta)\tau \frac{p}{p-\tau}}
          \bigg[ 
        \sum_{m_1=1}^{Bk}   m_1^{-\tau |\delta|^+\frac{p}{p-\tau}} m_1^{-\tau |\delta|^-\frac{p}{p-\tau}+2} \notag\\
         & \qquad + \sum_{m_2=1}^{Bk} m_2^{-\tau |\delta|^-\frac{p}{p-\tau}}           \sum_{m_1=1}^{m_2}  m_1^{-\tau |\delta|^+\frac{p}{p-\tau}+1}
          \bigg]            
     \bigg)^{\frac{p-\tau}{p}},
\end{align}
where in the last step the estimate for the first term is valid since the exponent of $m_2$ is $>0$ and in the second term we interchanged the order of the summation.\\
We need to distinguish several cases according to the exponents of $m_1$ and $m_2$. Note that we do not deal with the case when the exponent is equal to $-1$ in order to keep our presentation as simple as possible: as in Theorem \ref{theorem4} when finally summing over $j$ this particular case can be incorporated in the other cases and gives no additional information.
\begin{align*}
&(a) \:\: \sum_{m_1=1}^{Bk}   m_1^{-\frac{p\tau}{p-\tau}|\delta|+2}\lesssim 
 \begin{cases}
  1 &\mbox{if }  -\frac{p\tau}{p-\tau}|\delta|+2<-1\Longleftrightarrow 3 \frac{p-\tau}{p\tau} <|\delta| \\
   k^{3-\frac{p\tau}{p-\tau}|\delta|}  & \mbox{if } -\frac{p\tau}{p-\tau}|\delta|+2>-1\Longleftrightarrow 3 \frac{p-\tau}{p\tau}> |\delta|
   \end{cases}\\
   &(b)\:\: \sum_{m_1=1}^{m_2}  m_1^{-\tau |\delta|^+\frac{p}{p-\tau}+1}   \lesssim 
    \begin{cases}
       1 & \mbox{if } 
       -\tau |\delta|^+\frac{p}{p-\tau}+1<-1
       \Longleftrightarrow 
       2\frac{p-\tau}{p\tau}<|\delta|^+
        \\
     m_2^{-\tau |\delta|^+\frac{p}{p-\tau}+2}  & \mbox{if } -\tau |\delta|^+\frac{p}{p-\tau}+1>-1
             \Longleftrightarrow 
             2\frac{p-\tau}{p\tau}>|\delta|^+
      \end{cases}\\
      &\text{Moreover, for the first case of (b) we further have}\\
      &(c)\:\: \sum_{m_2=1}^{Bk} m_2^{-\tau |\delta|^-\frac{p}{p-\tau}} 
      \lesssim 
          \begin{cases}
             1 &\mbox{if } 
             -\tau |\delta|^-\frac{p}{p-\tau}<-1
             \Longleftrightarrow 
             \frac{p-\tau}{p\tau}<|\delta|^-
              \\
          k^{-\frac{p\tau}{p-\tau}|\delta|^-+1}  & \mbox{if }  
          -\tau |\delta|^-\frac{p}{p-\tau}>-1                        \Longleftrightarrow \frac{p-\tau}{p\tau}>|\delta|^-.
            \end{cases}
\end{align*}
For the time being we focus on the most interesting cases. From (a)-(c) we obtain in \eqref{6} that
\begin{align*}
& 2^{-j\tau(l-\beta)}\bigg( 
                            \sum_{k=1}^{2^j} k^{(|\delta|-\beta)\tau \frac{p}{p-\tau}}
          \bigg[ 
        \sum_{m_1=1}^{Bk}   m_1^{-\tau |\delta|\frac{p}{p-\tau}+2} \\
         &\qquad + \sum_{m_2=1}^{Bk} m_2^{-\tau |\delta|^-\frac{p}{p-\tau}}  \sum_{m_1=1}^{m_2}  m_1^{-\tau |\delta|^+\frac{p}{p-\tau}+1}
          \bigg]            
     \bigg)^{\frac{p-\tau}{p}}\\
     &\lesssim  2^{-j\tau(l-\beta)}\bigg( 
                                 \sum_{k=1}^{2^j} k^{(|\delta|-\beta)\tau \frac{p}{p-\tau}}
               \big[ 1+k^{-\frac{p\tau}{p-\tau}|\delta|^-+1}
             ]            
          \bigg)^{\frac{p-\tau}{p}}
\end{align*}
for the case  $3\frac{p-\tau}{p\tau}<|\delta|$, $2\frac{p-\tau}{p\tau}<|\delta|^+$, and $\frac{p-\tau}{p\tau}>|\delta|^-$.\\
We can further estimate
\begin{align*}
& 2^{-j\tau(l-\beta)}\bigg(  \sum_{k=1}^{2^j} k^{(|\delta|-\beta)\tau \frac{p}{p-\tau}}
         \underbrace{      \big[ 1+k^{-\frac{p\tau}{p-\tau}|\delta|^-+1}
             ]  }_{\lesssim k^{-\frac{p\tau}{p-\tau}|\delta|^-+1}}                \bigg)^{\frac{p-\tau}{p}}\\
 &= 2^{-j\tau(l-\beta)} \bigg(  \sum_{k=1}^{2^j}
 k^{(|\delta|-\beta-|\delta|^-)\frac{p\tau}{p-\tau}+1}
  \bigg)^{\frac{p-\tau}{p}}\\
          &= 2^{-j\tau(l-\beta)} \bigg(  \sum_{k=1}^{2^j}
           k^{(|\delta|^+-\beta)\frac{p\tau}{p-\tau}+1}
            \bigg)^{\frac{p-\tau}{p}}\\
 &\lesssim    2^{-j\tau(l-\beta)}  
  \begin{cases}
             1
            & \mbox{if }
             (|\delta|^+ -\beta)\frac{p\tau}{p-\tau}+1<-1
              \Longleftrightarrow 
             r<\frac{3}{2}(\beta-|\delta|^+) \:  (A)
               \\
   \left( 2^{j  [ (|\delta|^+-\beta)\frac{p\tau}{p-\tau}+2]}
     \right)^{\frac{p-\tau}{p}}       & \mbox{if } 
           (|\delta|^+ -\beta)\frac{p\tau}{p-\tau}+1>-1
                       \Longleftrightarrow 
                      r>\frac{3}{2}(\beta-|\delta|^+) \:  (B).
             \end{cases}     
                                \end{align*}
Now we consider the case $(A)$
and sum the estimate over $j$ from $0,...,\infty$. This yields
\begin{align*}
\displaystyle\sum_{j\geq 0} 2^{-j\tau(l-\beta)}<\infty \quad\iff\quad    \beta<l.
\end{align*}
For case $(B)$ we obtain
\begin{align*}
& \displaystyle\sum_{j\geq 0} 2^{-j\tau(l-\beta)} 2^{j(|\delta|^+ -\beta)\tau + 2j\frac{p-\tau}{p}}\\
&= \displaystyle\sum_{j\geq 0} 2^{-j [\tau (l-|\delta|^+)-2\frac{p-\tau}{p}]},
\end{align*}
which is finite precisely, if
\begin{align*}
   l-|\delta|^+-2 \frac{p-\tau}{p\tau}>0 \quad\iff\quad \frac{3}{2} (l-|\delta|^+)>r.  
\end{align*}
We summarize the conditions which we required in the proof using the relation $\frac{1}{\tau}=\frac{r}{3}+\frac{1}{p}\Longleftrightarrow \frac{p-\tau}{p\tau}=\frac{r}{3}$:
\begin{align*}
&(i)\:  3\frac{p-\tau}{p\tau}<|\delta|\Longleftrightarrow r<|\delta|\\
&(ii)\:  2\frac{p-\tau}{p\tau}<|\delta|^+\Longleftrightarrow r<\frac{3}{2}|\delta|^+\\
 &\left(  (iii)\: \frac{p-\tau}{p\tau}>|\delta|^- \Longleftrightarrow r>3|\delta|^- \right)\\
&(iv_A)\:  r<\frac{3}{2}(\beta-|\delta|^+)\\
&(v_A)\:   \beta<l\\
&(iv_B)\:  r>\frac{3}{2}(\beta-|\delta|^+)\\
&(v_B)\:  \frac{3}{2}(l-|\delta|^+)>r.
\end{align*}
The assumption \textit{(iii)} is always satisfied because $|\delta|^-$ is negative. Thus, according to the case $ (A) $, $u\in B^r_{\tau,\tau}(\mathbb{R}^3)$ for
\begin{align*}
   r<\min \{  |\delta|,\frac{3}{2}(\beta-|\delta|^+) \} \quad \text{and}\quad  \beta<l, 
\end{align*}
since $\min (\frac{3}{2}|\delta|^+, |\delta|)=|\delta|.$\\
Moreover, according to case $ (B) $, $u\in B^r_{\tau,\tau}(\mathbb{R}^3)$ for 
\begin{align*}
    \frac{3}{2}(\beta-|\delta|^+)<r<\min \{   |\delta|,\frac{3}{2}(l-|\delta|^+)\} . 
\end{align*}
Now we consider another interesting case, where we choose the second line from $(a)$ and the first one from $(b)$, i.e., when
$$3\frac{p-\tau}{p\tau}>|\delta| \quad \iff\quad r>|\delta|$$
and
$$2\frac{p-\tau}{p\tau}<|\delta|^+\quad \iff\quad r<\frac{3}{2}|\delta|^+ .$$
 Then we obtain in \eqref{6} that
\begin{align}
\label{uj}
& 2^{-j\tau(l-\beta)}\bigg( 
                            \sum_{k=1}^{2^j} k^{(|\delta|-\beta)\tau \frac{p}{p-\tau}}
          \bigg[ 
        \sum_{m_1=1}^{Bk}   m_1^{-\tau |\delta|\frac{p}{p-\tau}+2}\notag \\
         & \qquad+ \sum_{m_2=1}^{Bk} m_2^{-\tau |\delta|^-\frac{p}{p-\tau}}  \sum_{m_1=1}^{m_2}  m_1^{-\tau |\delta|^+\frac{p}{p-\tau}+1}
          \bigg]            
     \bigg)^{\frac{p-\tau}{p}}\notag\\
     &\lesssim   2^{-j\tau(l-\beta)}\bigg( 
                                 \sum_{k=1}^{2^j} k^{(|\delta|-\beta)\tau \frac{p}{p-\tau}}
               \bigg[ 
               k^{3-\frac{p\tau}{p-\tau}|\delta|}+
               \sum_{m_2=1}^{Bk}m_2^{-\tau |\delta|^- \frac{p}{p-\tau}}
            \bigg]            
          \bigg)^{\frac{p-\tau}{p}}\notag\\
             &  \lesssim   2^{-j\tau(l-\beta)}\bigg( 
                                           \sum_{k=1}^{2^j} k^{(|\delta|-\beta)\tau \frac{p}{p-\tau}}
                         \bigg[ 
                         k^{3-\frac{p\tau}{p-\tau}|\delta|}+
                        k^{-\frac{p\tau}{p-\tau}|\delta|^-+1}
                       \bigg]            
                    \bigg)^{\frac{p-\tau}{p}}\notag\\
  & = 2^{-j\tau(l-\beta)}\bigg( 
                                              \sum_{k=1}^{2^j}
k^{-\beta\frac{p\tau}{p-\tau}+3}     +      \sum_{k=1}^{2^j} k^{(|\delta|^+ -\beta)\frac{p\tau}{p-\tau}+1}                                        \bigg)^{\frac{p-\tau}{p}}\notag\\
&\lesssim  2^{-j\tau(l-\beta)} \left[ 
\left(
\sum_{k=1}^{2^j}
k^{-\beta\frac{p\tau}{p-\tau}+3}
 \right)^{\frac{p-\tau}{p}}+
\left( 
\sum_{k=1}^{2^j} k^{(|\delta|^+ -\beta)\frac{p\tau}{p-\tau}+1}
\right)^{\frac{p-\tau}{p}}
\right]=:\star.
\end{align}
Now we have four cases according to the exponents of $k$ in \eqref{uj}. We consider the first one, when
$$-\beta\frac{p\tau}{p-\tau}+3 >-1 \quad\iff\quad \frac{3}{4}\beta<r$$
and 
$$(|\delta|^+ -\beta)\frac{p\tau}{p-\tau}+1 >-1\quad\iff\quad \frac{3}{2}(\beta-|\delta|^+)<r.$$
Then we obtain that
\begin{align*}
(i) \quad \star \lesssim 2^{-j\tau(l-\beta)} \left[
\left(
(2^j)^{-\beta\frac{p\tau}{p-\tau}+4}
\right)^{\frac{p-\tau}{p}}
+
\left(
(2^j)^{(|\delta|^+ -\beta)\frac{p\tau}{p-\tau}+2}
\right)^{\frac{p-\tau}{p}}
\right].
\end{align*}
Now we consider the second case, when 
\begin{align*}
  -\beta\frac{p\tau}{p-\tau}+3 >-1\quad\iff\quad \frac{3}{4}\beta<r
\end{align*}
and 
\begin{align*}
   (|\delta|^+ -\beta)\frac{p\tau}{p-\tau}+1 <-1\quad\iff\quad \frac{3}{2}(\beta-|\delta|^+)>r,
\end{align*}
which imply
\begin{align*}
(ii) \quad \star \lesssim 2^{-j\tau(l-\beta)} \left[
\left(
(2^j)^{-\beta\frac{p\tau}{p-\tau}+4}
\right)^{\frac{p-\tau}{p}}
+
1
\right].
\end{align*}
In the third case we have
\begin{align*}
   -\beta\frac{p\tau}{p-\tau}+3 <-1\quad\iff\quad \frac{3}{4}\beta>r 
\end{align*}
and 
\begin{align*}
    (|\delta|^+ -\beta)\frac{p\tau}{p-\tau}+1 >-1\quad\iff\quad \frac{3}{2}(\beta-|\delta|^+)<r,
\end{align*}
from which we obtain
\begin{align*}
(iii) \quad \star \lesssim 2^{-j\tau(l-\beta)} \left[
1
+
\left(
(2^j)^{(|\delta|^+ -\beta)\frac{p\tau}{p-\tau}+2}
\right)^{\frac{p-\tau}{p}}
\right].
\end{align*}
And for the fourth case we have the conditions
\begin{align*}
    -\beta\frac{p\tau}{p-\tau}+3 <-1\quad\iff\quad \frac{3}{4}\beta>r
\end{align*}
and 
\begin{align*}
    (|\delta|^+ -\beta)\frac{p\tau}{p-\tau}+1 <-1\quad\iff\quad \frac{3}{2}(\beta-|\delta|^+)>r.
\end{align*}
Then we get that
\begin{align*}
(iv) \quad \star \lesssim 2^{-j\tau(l-\beta)} .
\end{align*}
Now we sum the estimate over $j$ from $0,...,\infty$. We detail only the first case $ (i) $, the other ones are very similar.
\begin{align*}
&\displaystyle\sum_{j\geq 0}
2^{-j\tau(l-\beta)} \left[
\left(
(2^j)^{-\beta\frac{p\tau}{p-\tau}+4}
\right)^{\frac{p-\tau}{p}}
+
\left(
(2^j)^{(|\delta|^+ -\beta)\frac{p\tau}{p-\tau}+2}
\right)^{\frac{p-\tau}{p}}
\right]\\
&=\displaystyle\sum_{j\geq 0} \left(
2^{-j(\tau l-4\frac{p-\tau}{p})}+2^{-j(\tau l-|\delta|^+\tau -2\frac{p-\tau}{p})}
\right),
\end{align*}
which is finite, if
\begin{align*}
  \tau l-4\frac{p-\tau}{p}>0 \quad \iff\quad \frac{3}{4}l>r 
\end{align*}
and 
\begin{align*}
  \tau l-|\delta|^+\tau -2\frac{p-\tau}{p}>0\quad \iff\quad  \frac{3}{2}(l-|\delta|^+)>r .
\end{align*}
Now we consider the case $ (ii) $:
For the first term we have the same condition as in $ (i) $, thus
\begin{align*}
    \frac{3}{4}l>r, 
\end{align*}
and in the second term we have
\begin{align*}
   \displaystyle\sum_{j\geq 0}
2^{-j\tau(l-\beta)}\cdot 1 <\infty \quad \iff\quad  l> \beta. 
\end{align*}
In the case $(iii)$ we obtain the two conditions 
\begin{align*}
   l>\beta \quad \text{and}\quad \frac{3}{2}(l-|\delta|^+)>r.
\end{align*}
Finally, in the last case $ (iv) $ for both terms we obtain the same condition, which is
\begin{align*}
   l>\beta.
\end{align*}
We summarize the conditions which we required in these cases. The function $u$ belongs to the Besov space $ B^r_{\tau,\tau}(\mathbb{R}^3)$ for
\begin{align*}
(i)\quad  \max \bigg\{\frac{3}{2}(l-|\delta|^+),\frac{3}{2}(\beta-|\delta|^+), |\delta|, \frac{3}{4}\beta  \bigg\}<r<\min \bigg\{ \frac{3}{2}|\delta|^+, \frac{3}{4}l\bigg\},
\end{align*}
and for
\begin{align*}
(ii)\quad \max \bigg\{ |\delta|, \frac{3}{4}\beta  \bigg\}<r<\min \bigg\{\frac{3}{2}|\delta|^+, \frac{3}{4}l,\frac{3}{2}(\beta-|\delta|^+)  \bigg\},\quad l>\beta,
\end{align*}
and for
\begin{align*}
(iii)\quad \max \bigg\{ |\delta|, \frac{3}{2}(\beta-|\delta|^+)  \bigg\}<r<\min \bigg\{\frac{3}{2}|\delta|^+, \frac{3}{4}\beta,\frac{3}{2}(l-|\delta|^+)  \bigg\},\quad l>\beta,
\end{align*}
and for
\begin{align*}
(iv)\quad |\delta| <r< \min \bigg\{\frac{3}{2}|\delta|^+, \frac{3}{4}\beta,\frac{3}{2}(\beta-|\delta|^+)  \bigg\},\quad l>\beta.
\end{align*}
Finally we consider one more case, when we choose the second line from $(a)$, as well as from $ (b) $, i.e., when
\begin{align*}
   3\frac{p-\tau}{p\tau}>|\delta| \quad \iff\quad r>|\delta|  
\end{align*}
and
\begin{align*}
   \frac{p-\tau}{p\tau}>|\delta|^+\quad \iff\quad r>\frac{3}{2}|\delta|^+ , 
\end{align*}
hence,
\begin{align*}
    r>\max\left(|\delta|,\frac{3}{2}|\delta|^+ \right)=\frac{3}{2}|\delta|^+.
\end{align*}
 Then we obtain in \eqref{6} that
\begin{align}
\label{ujj}
& 2^{-j\tau(l-\beta)}\bigg( 
                            \sum_{k=1}^{2^j} k^{(|\delta|-\beta)\tau \frac{p}{p-\tau}}
          \bigg[ 
        \sum_{m_1=1}^{Bk}    m_1^{-\tau |\delta|\frac{p}{p-\tau}+2} \notag\\
         &\qquad + \sum_{m_2=1}^{Bk} m_2^{-\tau |\delta|^-\frac{p}{p-\tau}}  \sum_{m_1=1}^{m_2}  m_1^{-\tau |\delta|^+\frac{p}{p-\tau}+1}
          \bigg]            
     \bigg)^{\frac{p-\tau}{p}}\notag\\
    & \lesssim 
     2^{-j\tau(l-\beta)}\bigg( 
      \sum_{k=1}^{2^j} k^{(|\delta|-\beta)\tau \frac{p}{p-\tau}}
               \bigg[ 
     k^{3-\frac{p\tau}{p-\tau}|\delta|}+
     \sum_{m_2=1}^{Bk} \underbrace{ m_2^{-\tau|\delta|^-\frac{p}{p-\tau}}m_2^{-\tau |\delta|^+\frac{p}{p-\tau}+2}}_{= m_2^{-\tau|\delta|\frac{p}{p-\tau}+2}}
     \bigg] 
     \bigg)^{\frac{p-\tau}{p}}=:\diamond.
\end{align}
Since we have the condition $r>|\delta|$, we have in \eqref{ujj} that the exponent of $m_2$ is $>-1$. Therefore,
\begin{align*}
&\diamond \lesssim 2^{-j\tau(l-\beta)}\bigg( 
      \sum_{k=1}^{2^j} k^{(|\delta|-\beta)\tau \frac{p}{p-\tau}}
               \bigg[ 
     k^{3-\frac{p\tau}{p-\tau}|\delta|}+
     k^{-|\delta|\frac{p\tau}{p-\tau}+3}
     \bigg] 
     \bigg)^{\frac{p-\tau}{p}}\\
     &= 2^{-j\tau(l-\beta)}\bigg( 
           \sum_{k=1}^{2^j} k^{(|\delta|-\beta)\tau \frac{p}{p-\tau}}\cdot 2\cdot k^{3-\frac{p\tau}{p-\tau}|\delta|}\bigg)^{\frac{p-\tau}{p}}\\
           &\lesssim 2^{-j\tau(l-\beta)}
           \bigg( 
                      \sum_{k=1}^{2^j}
                      k^{3-\beta \frac{p\tau}{p-\tau}}
                      \bigg)^{\frac{p-\tau}{p}}\\
                      &\lesssim 2^{-j\tau(l-\beta)}
       \begin{cases}
     \left[ (2^j)^{3-\beta \frac{p\tau}{p-\tau}+1}   \right]^{\frac{p-\tau}{p}} &\mbox{if }\: 3-\beta \frac{p\tau}{p-\tau}>-1\quad \iff\quad r>\frac{3}{4}\beta \quad (C)\\
      1 &\mbox{if }  \: 3-\beta \frac{p\tau}{p-\tau}<-1 \quad \iff\quad r<\frac{3}{4}\beta\quad (D).
              \end{cases}
\end{align*}
Summing over $j$ from $0,...,\infty$, we obtain for case $(C)$,
\begin{align*}
\displaystyle\sum_{j\geq 0} 2^{-j\tau(l-\beta)} 2^{j4\frac{p-\tau}{p}-j\beta \tau}=\sum_{j\geq 0} 2^{-j(\tau l-4\frac{p-\tau}{p})},
\end{align*}
which is finite, if
\begin{align*}
   \tau l-4\frac{p-\tau}{p}>0 \quad \iff\quad  \frac{3}{4}l>r. 
\end{align*}
For the case $(D)$ we have 
\begin{align*}
    \displaystyle\sum_{j\geq 0} 2^{-j\tau(l-\beta)}<\infty \quad \iff\quad  l>\beta. 
\end{align*}
Summarizing the conditions in these cases the function $u$ belongs to the Besov space $ B^r_{\tau,\tau}(\mathbb{R}^3)$ for
\begin{align*}
    (C)\quad \max \left\{ \frac{3}{2}|\delta|^+, \frac{3}{4}\beta  \right\}<r<\frac{3}{4}l 
\end{align*}
and for 
\begin{align*}
  (D)\quad \frac{3}{2}|\delta|^+<r< \frac{3}{4}\beta,\quad l>\beta. 
\end{align*}
This completes the proof.
\end{proof}

\section{Regularity results in Sobolev and Besov spaces for the Poisson equation}
\label{regularityproblems}

In this chapter we compare regularity results in Sobolev and Besov spaces for the Poisson equation with Dirichlet, Neumann and mixed boundary conditions.

\paragraph{Fundamental problem} The general boundary value problem we want to study is the following: 
\begin{equation}\label{generalproblem}
 \begin{cases} \ 
  -\Delta u \ =\ f &\text{in}\quad  K, \\
  \quad \quad u \ =\ g_{j} &\text{on}\quad  \Gamma_j \quad\text{for}\quad j\in{J_0}, \\
   \quad  \frac{\partial}{\partial \nu}u \ =\ g_{j} \quad &\text{on}\quad  \Gamma_j \quad\text{for}\quad j\in J_1,
 \end{cases}
\end{equation}
where   $J_0\cap J_1=\emptyset$ and $J_0\cup J_1=J:=\{1,...,n \}$. Moreover, $\nu$ denotes the outer normal on the faces $\Gamma_j$, $j=1,\ldots, n$ of the polyhedral cone $K$ and we denote the  whole boundary by $\Gamma:=\bigcup_{j=1}^n \Gamma_j$.  We will also need to consider the truncated cone  $\tilde{K}$  with boundary $\tilde{\Gamma}$.  
We separately discuss (and distinguish between) the following three  cases  in the sequel:  
\bit 
\item $J_0=J$:  Dirichlet problem, 
\item $J_0=\emptyset$:  Neumann problem, 
\item $\emptyset \neq J_0 \subsetneq J$: Mixed boundary values. 
\eit 


Concerning the regularity in the  scale of fractional Sobolev spaces we will use results from \cite{Gris,Zan00,Ag11} in order to obtain a suitable upper bound. For the regularity results in Besov spaces we apply the embeddings from Section \ref{embeddings3}. In particular, when $p=2$ we have  $B_{2,2}^s=H^s$ for $s>0$, and the embedding may be formulated as 
$$V_{\beta,\delta}^{l,2}({K})\cap H^s(\tilde{K})\hookrightarrow B_{\tau,\tau}^r(\tilde{K}),\qquad \frac{1}{\tau}=\frac{r}{3}+\frac{1}{2},  $$
subject to further restrictions on the parameters, cf. Theorems  \ref{theorem4}, \ref{theorem5}. 
We see that for the Besov regularity we need to  rely on regularity results in the weighted Sobolev spaces $V^{l,p}_{\beta,\delta}(K)$ as well. However, these results are stated with the help of so-called  operator pencils, which we briefly introduce below. Further information on this subject may be found in  \cite[Sect.~7.1]{MR10} or \cite[Sect.~4.1]{MR02}.  \\

\paragraph{Operator pencils generated by the boundary value problem} 
There are two types of operator pencils for  problem \eqref{generalproblem}. The first one is related to the edges of the cone and the second one to its vertex.\\

{\em 1. Operator pencils $A_{j}(\lambda)$ for edges:} \  Let $M_j$ be an edge of the cone ${K}$, and let $\Gamma_{j_{\pm}}$ be the faces adjacent to $M_j$. Then by $\mathcal{D}_j$ we denote the dihedron which is bounded by the half-planes $\mathring{\Gamma}_{j_{\pm}}$ tangent to $\Gamma_{j_{\pm}}$ at $M_j$. Let $r,\varphi$ be polar coordinates in the plane perpendicular to $M_j$ such that 
\[
\mathring{\Gamma}_{j_{\pm}}=\left\{x\in \real^3: \ r>0, \ \varphi=\pm \frac{\theta_j}{2}\right\}.
\]

For $\lambda \in \mathbb{C}$ we define the operators $\widetilde{\Delta}_j(\lambda)$ and $\mathcal{B}_{j_{\pm}}(\lambda)$ on the Sobolev space $H^2(I_j)$ with  $I_j:=\left(\frac{-\theta_j}{2}, \frac{\theta_j}{2}\right)$ by 
\[
\widetilde{\Delta}_j(\lambda)u(\varphi)=r^{2-\lambda} \big(-\Delta\big)(r^{\lambda}u(\varphi)), \qquad 
\mathcal{B}_{j\pm}(\lambda)=
\begin{cases}
u(\varphi)& \text{if } j_{\pm} \in J_0, \\
r^{1-\lambda}\frac{\partial}{\partial \varphi}\Big(r^{\lambda}u(\varphi)\Big)& \text{if } j_{\pm} \in J_1. 
\end{cases}
\]
By $A_j(\lambda)$ we denote the operator 
\[
H^2(I_j)\ni u\mapsto \Big(\widetilde{\Delta}_j(\lambda){u}, \ \mathcal{B}_{j+}(\lambda)u(\varphi)\big|_{\varphi=-\frac{\theta_j}{2}}, \ 
\mathcal{B}_{j-}(\lambda)u(\varphi)\big|_{\varphi=\frac{\theta_j}{2}}\Big)\in L_2(I_j)\times \mathbb{C}\times \mathbb{C}. 
\]
{\em 2. Operator pencil $\mathfrak{A}(\lambda)$ for the vertex:} We   introduce spherical coordinates $\rho=|x|$, $\omega=\frac{x}{|x|}$ on  {$K$} and consider the sperical cap $\Omega=K\cap S^2$.  On the space $\mathcal{H}_{\Omega}=\{u\in H^1(\Omega): \ u=0 \text{ on } \gamma_j=\Omega\cap \Gamma_j\text{ for } j\in J_0
\}$,  we consider the bilinear form 
\[
a(u,v;\lambda)=\frac{1}{\log 2} \int_{K\cap \{1<|x|<2\}} \nabla U\cdot \nabla \overline{V}~dx, \qquad u,v\in \mathcal{H}_{\Omega}, 
\]
where $U(x)=\rho^{\lambda}u(\omega)$, $V(x)=\rho^{-1-\overline{\lambda}}v(\omega)$, and $\lambda \in \mathbb{C}$. Then the operator  $\mathfrak{A}(\lambda):\mathcal{H}_{\Omega}\rightarrow \mathcal{H}_{\Omega}^{\ast}$ is defined by 
\[
\big(\mathfrak{A}(\lambda)u,v\big)_{\Omega}=a(u,v;\lambda), \qquad u,v\in \mathcal{H}_{\Omega}, 
\]
where $(\cdot, \cdot)_{\Omega}$ denotes the extension of the $L_2$ scalar product to $\mathcal{H}_{\Omega}^{\ast}\times \mathcal{H}_{\Omega}$. \\

It is known that the  spectra of the  pencils $A_j$ and  $\mathfrak{A}$ consist of isolated points, the eigenvalues of these pencils, which have no accumulation points, cf. \cite[p. 292]{MR10}.  
Moreover, we denote by  {$\delta_{\pm}^{(j)}$} the 
greatest positive real numbers such that the strip 
\[
-\delta_{-}^{(j)}<\mathrm{Re}\lambda<\delta_{+}^{(j)}
\]
is free of eigenvalues of the pencil $A_j(\lambda)$ with the exception of the eigenvalue $\lambda=0$ in case of the Neumann problem.\\

For  the Poisson equation with homogeneous boundary conditions,   
according to \cite[p.~72]{Gris} and \cite[Ch.~2]{KMR01} the eigenvalues $\big(\lambda_{j,m}\big)_{m\in \mathbb{Z}}$ of $A_j$ are given by 
\begin{align}
    \label{lambdajkm}
    \lambda_{j,m} &\ =\ m \frac{\pi}{\theta_j} &\quad &\text{for}\   j,\ k\in J_0, \quad m\neq 0,  &(\text{Dirichlet problem})\notag\\
    \lambda_{j,m}&\ =\ (m-1) \frac{\pi}{\theta_j} &\quad &\text{for}\  j,\ k\in  J_1, & (\text{Neumann problem})\notag\\
    \lambda_{j,m}&= \left(m-\frac{1}{2}\right) \frac{\pi}{\theta_j} &\quad &\text{for}\  j\in J_0 \wedge  k \in J_1\quad \text{or}\quad   j\in J_1\wedge k\in  J_0 & (\text{Mixed problem}).
\end{align}
Therefore, we deduce that $\delta_{\pm}^{(j)}=\frac{\pi}{\theta_j}$  for the pure Dirichlet and Neumann problem,  whereas  $\delta_{\pm}^{(j)}=\frac{\pi}{2\theta_j}$ for the mixed boundary value problem. 
Moreover, in this case  the eigenvalues of $\mathfrak{A}$ are given by 
\[
\Lambda_{\pm l}=-\frac 12\pm \sqrt{\widetilde{\lambda}_l+\frac 14}, 
\]
where $\widetilde{\lambda}_l$ are the eigenvalues of the Laplace-Beltrami operator $-\Delta_{\omega}$ on $\Omega$ 
with corresponding orthonormalized eigenfunctions $\psi_l$ (and boundary conditions), i.e., 
\[
-\Delta_{\omega}\psi_l=\widetilde{\lambda}_l\psi_l \quad \text{on}\quad \Omega. 
\]

In our regularity theorems below we will usually assume that the line $l-\beta-\frac 32$ is free of the eigenvalues of $\mathfrak{A}(\lambda)$. Since the set of eigenvalues of $\mathfrak{A}$ consists of isolated points,  this is not a severe restriction (shift $\beta\in \real$ slightly).

\subsection{Regularity for the Dirichlet problem}

First we want to study the regularity of the solution of the Poisson equation with Dirichlet boundary conditions only, i.e., $J_0=J$ in \eqref{generalproblem}, which reads as:
\begin{equation}\label{poisson}
    -\Delta u  =f \quad \text{in}\quad  K, \qquad 
    u =g_{j} \quad \text{on}\quad  \Gamma_j \quad\text{for}\quad j=1,...,n. 
\end{equation}




For the regularity in fractional Sobolev spaces we use the following result of Grisvard, cf.  \cite[Corollary~2.6.7]{Gris} or \cite[Proposition~4.1]{HS1}.

\begin{proposition}[\bf Fractional Sobolev regularity for  Dirchlet and Neumann problem with homogeneous boundary values]
\label{prop4.1}
Let $D\subset \real^3$ be any bounded Lipschitz domain. Then there exists a number $\alpha_0>3/2$ such that for every $f\in L_2(D)$ the  solution $u$ of the Poisson equation (either with homogeneous Dirichlet or with homogeneous Neumann boundary conditions) belongs to $H^s(D)$ for every $s<\alpha_0$.
\end{proposition}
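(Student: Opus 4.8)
The plan is to deduce the statement from the sharp ``shift'' theorem for the inhomogeneous Dirichlet and Neumann problems for the Laplacian on a bounded Lipschitz domain; everything surrounding that theorem is soft. First I would fix the meaning of ``the solution'': for the Dirichlet problem $u$ is the unique $u\in\mathring{H}^1(D)$ with $\int_D\nabla u\cdot\nabla\varphi\,dx=\int_D f\varphi\,dx$ for all $\varphi\in\mathring{H}^1(D)$ (Lax--Milgram), and for the Neumann problem the unique mean-zero $u\in H^1(D)$ solving the natural variational identity, with the compatibility condition $\int_D f\,dx=0$ tacitly assumed (equivalently one works in the quotient $H^1(D)/\real$). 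In either case $u\in H^1(D)$ is already granted. One may moreover split off a Newtonian potential: extending $f$ by zero to $\tilde f\in L_2(\real^3)$ and setting $w:=\mathcal N*\tilde f$ one has $-\Delta w=\tilde f$ on $\real^3$ and, by Calder\'on--Zygmund, $w|_D\in H^2(D)$, so that $v:=u-w|_D$ solves the \emph{homogeneous} equation with boundary data the trace, resp.\ conormal derivative, of $-w|_D$ (lying in $H^1(\partial D)$, resp.\ $L_2(\partial D)$). Together with Verchota's $L_2(\partial D)$-invertibility of the layer potentials this already gives $u\in H^{3/2}(D)$; but to pass beyond $3/2$ one genuinely needs the full theorem, which I would therefore quote directly.

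The key input --- this is precisely the content of \cite[Cor.~2.6.7]{Gris}, with harmonic-analytic substance due to Dahlberg--Kenig, Jerison--Kenig and Verchota in the Dirichlet case and to Zanger~\cite{Zan00} in the Neumann case --- reads: there is $\varepsilon=\varepsilon(D)>0$ such that the solution operator of $-\Delta u=F$ in $D$ with $u|_{\partial D}=0$ (resp.\ $\partial_\nu u=0$ on $\partial D$) is bounded from $H^{s-2}(D)$ into $H^s(D)$ for every $s\in(\tfrac{1}{2},\tfrac{3}{2}+\varepsilon(D))$. Granting this, I would then simply observe that $f\in L_2(D)=H^0(D)\hookrightarrow H^{s-2}(D)$ whenever $s\le 2$, whence $u\in H^s(D)$ for every $s<\min\{2,\tfrac{3}{2}+\varepsilon(D)\}=:\alpha_0$. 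Since $\alpha_0>\tfrac{3}{2}$ and depends only on $D$, this is exactly the claim; the Neumann case follows verbatim using the Neumann operator and \cite{Zan00} (with its own $\varepsilon(D)$), and one may take the smaller of the two $\alpha_0$'s.

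The essential difficulty is not in this reduction but inside the cited theorem, in the \emph{strict} inequality $\varepsilon(D)>0$: interpolating the variational solution ($s=1$) with the solution of the ``regularity problem'' ($s=\tfrac{3}{2}$, i.e.\ $L_2(\partial D)$ non-tangential gradient bounds coming from the $L_2$-invertibility of the layer potentials) only reaches the threshold $s=\tfrac{3}{2}$. Crossing it uses that those invertibilities persist on $L_p(\partial D)$ for $p$ in an open interval around $2$ --- via Rellich identities, the Coifman--McIntosh--Meyer bounds for the Cauchy integral, and stability of invertibility under interpolation --- which pushes the smoothness a definite amount past $\tfrac{3}{2}$. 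I would import this, and its Neumann counterpart in \cite{Zan00}, as a black box rather than reprove it.
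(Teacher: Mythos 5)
Your argument is correct and coincides with what the paper does: the paper gives no proof of Proposition \ref{prop4.1} but simply cites \cite[Cor.~2.6.7]{Gris} (resp.\ \cite[Prop.~4.1]{HS1}), and your reduction to the Lipschitz-domain shift theorem with range $s<\tfrac32+\varepsilon(D)$ (imported as a black box, with \cite{Zan00} for the Neumann case) is exactly the content of those references. The additional scaffolding (Newtonian potential splitting, layer potentials) is sound but not needed beyond the quoted theorem.
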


In order to also allow inhomogeneous Dirichlet boundary conditions we use  a result of Jerison and Kenig \cite[Thm.~5.1]{JK95}. 
\begin{proposition}\label{prop-dirichlet-inhom-0}
\label{JerisonKenig}
Let $D\subset \real^3$ be any bounded Lipschitz domain and let $g\in H^{s-1/2}(\partial D)$ for some $1/2<s<3/2$. Then, there exists a unique harmonic function $u$, i.e. $\Delta u=0$ in $D$, such that
\begin{align*}
    u\in H^s(D) \qquad \text{and}\qquad u=g \quad\text{on}\quad \partial D.
\end{align*}
\end{proposition}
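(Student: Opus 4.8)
The statement is precisely \cite[Thm.~5.1]{JK95}, so nothing is really left to do; still, let me indicate how the proof goes, since it also explains why the restriction $1/2<s<3/2$ is the natural one for a general Lipschitz domain. The plan is to reduce the inhomogeneous Dirichlet problem for $-\Delta$ to two endpoint cases and then interpolate. The two endpoints are the classical results on the Laplacian in bounded Lipschitz domains: first, the $L_2$-\emph{Dirichlet problem} (Dahlberg), which for $g\in L_2(\partial D)=H^0(\partial D)$ produces a harmonic $u$ whose nontangential maximal function lies in $L_2(\partial D)$ and which, as one checks, belongs to $H^{1/2}(D)$ with $\|u\,|H^{1/2}(D)\|\lesssim \|g\,|L_2(\partial D)\|$; and second, the $L_2$-\emph{regularity problem} (Jerison--Kenig, Verchota), which for $g\in H^1(\partial D)$ yields a harmonic $u$ with the nontangential maximal function of $\nabla u$ in $L_2(\partial D)$, hence $u\in H^{3/2}(D)$ with the corresponding estimate.

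Granting these two a priori bounds, I would first obtain existence together with $\|u\,|H^s(D)\|\lesssim \|g\,|H^{s-1/2}(\partial D)\|$ on the whole open range $1/2<s<3/2$ by complex interpolation between the endpoints $s=1/2$ and $s=3/2$, using that on a bounded Lipschitz domain the scales $H^\theta(D)$ and $H^{\theta-1/2}(\partial D)$ interpolate compatibly. Since $1/2<s<3/2$, the trace map $H^s(D)\to H^{s-1/2}(\partial D)$ is bounded, so the boundary identity $u=g$ on $\partial D$ is meaningful and is inherited from the endpoint constructions, where it holds via nontangential convergence.

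For uniqueness, let $u\in H^s(D)$ be harmonic with vanishing trace. As $s>1/2$, the function $u$ has nontangential limit $0$ almost everywhere on $\partial D$ and a square-integrable nontangential maximal function, so the uniqueness part of the $L_2$-Dirichlet problem in Lipschitz domains forces $u\equiv 0$; alternatively, for $s\ge 1$ one integrates by parts to get $\int_D|\nabla u|^2\,dx=0$.

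The hard part will be the endpoint $s=3/2$, i.e.\ the $L_2$-regularity problem — this is the only genuinely deep ingredient, and also the reason the interval cannot be enlarged for an arbitrary Lipschitz domain. Its proof rests on Rellich-type identities and the $L_2$-boundedness of the double-layer potential on $\partial D$; these are precisely the tools that degenerate once one tries to push past one boundary derivative, and explicit Lipschitz domains show that even smooth data need not give a solution in $H^{3/2+\varepsilon}(D)$. Once the two endpoints are available, the interpolation and trace bookkeeping is routine.
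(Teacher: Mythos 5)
The paper offers no proof of this proposition at all: it is imported verbatim as \cite[Thm.~5.1]{JK95}, which is exactly what you identify in your first sentence, so your proposal agrees with the paper's treatment on the only point that matters. Your additional sketch (Dahlberg's $L_2$-Dirichlet problem and the Jerison--Kenig/Verchota $L_2$-regularity problem as endpoints, interpolation for the open range $1/2<s<3/2$, trace and nontangential-limit arguments for the boundary identity and uniqueness) is an accurate summary of how the cited theorem is actually proved and correctly explains why the range of $s$ cannot be enlarged for general Lipschitz domains.
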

Proposition \ref{prop-dirichlet-inhom-0} allows us to reduce  \eqref{poisson} to a Dirichlet problem with homogeneous boundary values: 
If $u_1$ is the solution of the homogeneous problem 
\[
\Delta u_1=0\quad \text{in}\quad  D,  \qquad u_1=g \quad \text{on}\quad \partial D,
\]
and $u_2$ solves the  problem 
\[
    -\Delta u_2  =f \quad \text{in}\quad  D, \qquad 
    u_2 =0 \quad \text{on}\quad  \partial D, 
\]
then $u=u_1+u_2$ is the solution of the inhomogeneous problem 
\begin{equation}\label{inhom-dirichlet}
    -\Delta u  =f \quad \text{in}\quad  D, \qquad 
    u =g \quad \text{on}\quad  \partial D.  
\end{equation}
Therefore, a combination of  Propositions \ref{prop4.1} and  \ref{JerisonKenig}  gives the following regularity result: 
\begin{proposition}[\bf Fractional Sobolev regularity for the Dirichlet problem]\label{prop-dirichlet-inhom}
Let $D\subset \real^3$ be any bounded Lipschitz domain and assume that  $f\in L_2(D)$ and  $g\in H^{s-1/2}(\partial D)$ for some $1/2<s<3/2$. Moreover, let $\alpha_0>3/2$ be the constant from Proposition \ref{prop4.1}.  Then there exists a solution $u\in H^{\overline{\alpha}}(D)$ of \eqref{inhom-dirichlet} for   
%
every $\overline{\alpha}<\alpha_0$ in case of homogeneous boundary conditions ($g=0$) and $\overline{\alpha}=s$ in case of inhomogeneous boundary conditions.
\end{proposition}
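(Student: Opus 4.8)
The plan is to use linearity to reduce \eqref{inhom-dirichlet} to the two problems already settled in Propositions \ref{prop4.1} and \ref{JerisonKenig}. First I would introduce the splitting $u=u_1+u_2$, where $u_1$ is the harmonic function determined by $\Delta u_1=0$ in $D$ and $u_1=g$ on $\partial D$, and $u_2$ is the solution of $-\Delta u_2=f$ in $D$ with $u_2=0$ on $\partial D$. Existence and uniqueness of $u_2$ in $H^1_0(D)$ follow from the Lax--Milgram lemma, while the existence of $u_1$ together with its boundary behaviour is precisely the content of Proposition \ref{JerisonKenig}. By construction $u=u_1+u_2$ solves \eqref{inhom-dirichlet}.

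Next I would read off the regularity of the two pieces separately. Since $f\in L_2(D)$, Proposition \ref{prop4.1} gives $u_2\in H^{\overline{\alpha}}(D)$ for every $\overline{\alpha}<\alpha_0$, where $\alpha_0>3/2$ is the constant provided by that proposition. Since $g\in H^{s-1/2}(\partial D)$ with $1/2<s<3/2$, Proposition \ref{JerisonKenig} gives $u_1\in H^s(D)$.

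Finally I would combine the two. In the homogeneous case $g=0$ we have $u_1\equiv 0$, hence $u=u_2\in H^{\overline{\alpha}}(D)$ for every $\overline{\alpha}<\alpha_0$, which is the first assertion. In the inhomogeneous case, since $s<3/2<\alpha_0$, the choice $\overline{\alpha}=s$ is admissible in Proposition \ref{prop4.1}, so $u_2\in H^s(D)$ as well; adding the two contributions yields $u=u_1+u_2\in H^s(D)$, which is the second assertion.

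There is no genuine analytic obstacle here once the two forerunner results are available; the only point worth emphasising is that the Sobolev exponent in the inhomogeneous case cannot be pushed beyond $s$ (in particular not beyond $3/2$), because on a general bounded Lipschitz domain the harmonic extension of boundary data in $H^{s-1/2}(\partial D)$ need not lie in any $H^{s'}(D)$ with $s'>3/2$. Thus the restriction $1/2<s<3/2$ in the hypotheses is intrinsic to Proposition \ref{JerisonKenig} and not an artefact of the argument, and it is exactly this loss of smoothness at the boundary which the weighted Sobolev (and Besov) estimates in the later sections are designed to circumvent.
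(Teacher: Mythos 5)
Your proposal is correct and follows essentially the same route as the paper: the paper likewise splits $u=u_1+u_2$ with $u_1$ the harmonic extension of $g$ handled by Proposition \ref{JerisonKenig} and $u_2$ the solution of the homogeneous Dirichlet problem handled by Proposition \ref{prop4.1}, then combines the two regularity statements (using $s<3/2<\alpha_0$ in the inhomogeneous case). Your closing remark on why the bound $\overline{\alpha}=s$ cannot be improved on a general Lipschitz domain is a sensible addition but not part of the paper's argument.
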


Moreover, in order to establish regularity in Besov spaces we rely on the   following important regularity result  of Maz'ya and Rossmann in weighted Sobolev spaces, whose proof can be found in \cite[p.~127, Thm.~3.5.10]{MR10}. Since we want to apply it in combination with the embeddings from Section \ref{embeddings3} we restrict ourselves to the case when $p=2$.

\begin{theorem}[\bf Weighted Sobolev regularity for the Dirichlet Problem]
\label{theoremMR}
 Let $K$ be a polyhedral cone, $f\in V_{\beta,\delta}^{l-2,2}(K),\: l\geq 1$, and  $g_{j}\in V_{\beta,\delta}^{l-k+1/2,2}(\Gamma_j) $ for $j=1,...,n$. We assume that the line $\text{Re}\: \lambda=l-\beta-3/2$ does not contain eigenvalues of the pencil $\mathfrak{A}(\lambda)$ and that the components of $\delta$ satisfy 
 \begin{align}
 \label{conditionsept15}
     -\delta_{+}^{(j)}<\delta_j-l+1 <\delta_{-}^{(j)}\quad \text{for}\quad j=1,...,n.   
 \end{align}
 Then there exists a unique solution $u\in V_{\beta,\delta}^{l,2}(K)$ of the problem \eqref{poisson} satisfying the estimate
 \begin{align}
     \label{estimatesept15}
     \|u|V_{\beta,\delta}^{l,2}(K)\|\leq c \left( \|f|V_{\beta,\delta}^{l-2,2}(K)\|+\displaystyle\sum_{j=1}^n\ \|g_{j}|V_{\beta,\delta}^{l-k+1/2,2}(\Gamma_j) \|  \right)
 \end{align}
 with a constant $c$ independent of $f$ and $g_{j}$.
 \end{theorem}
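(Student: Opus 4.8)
The plan is to follow the classical Kondratiev programme for elliptic boundary value problems on domains with conical points and edges, now carried out in the mixed-weight scale $V_{\beta,\delta}^{l,2}$. First one reduces to homogeneous boundary data: by the trace description of Definition \ref{tracespaceV} each $g_{j}$ lifts to a function $G\in V_{\beta,\delta}^{l,2}(K)$ with $G=g_{j}$ on $\Gamma_j$ and $\|G|V_{\beta,\delta}^{l,2}(K)\|\lesssim\sum_{j}\|g_{j}|V_{\beta,\delta}^{l-1/2,2}(\Gamma_j)\|$; replacing $u$ by $u-G$ and $f$ by $f+\Delta G$ (which still belongs to $V_{\beta,\delta}^{l-2,2}(K)$ since $\partial^{\alpha}$ maps $V_{\beta,\delta}^{l,2}(K)$ into $V_{\beta-|\alpha|,\delta-|\alpha|}^{l-|\alpha|,2}(K)$, as follows directly from the definition of the $V$-spaces) one is left with $g_{j}\equiv 0$. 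Existence of a variational solution is then obtained from the Dirichlet form $a(u,v)=\int_{K}\nabla u\cdot\nabla\bar v\,dx$ together with a Fredholm argument; the decisive hypotheses are that the line $\mathrm{Re}\,\lambda=l-\beta-3/2$ carries no eigenvalue of $\mathfrak{A}(\lambda)$ and that \eqref{conditionsept15} places the edge weight lines inside the eigenvalue-free strips of the pencils $A_j(\lambda)$, which is exactly the invertibility condition for the associated model operators.

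The core of the argument is the a priori estimate \eqref{estimatesept15}, proved by localization. Away from the singular set $S$ one uses interior and boundary-face elliptic regularity of Agmon--Douglis--Nirenberg type, the weights being there bounded above and below. Near an edge $M_j$ but away from the vertex one freezes the problem to the dihedral model operator assembled from $\widetilde{\Delta}_j(\lambda)$ and $\mathcal{B}_{j\pm}(\lambda)$, applies the partial Mellin transform in the distance $r_j$ to the edge, and inverts the one-dimensional pencil $A_j(\lambda)$ on the relevant weight line; condition \eqref{conditionsept15} guarantees that line is free of eigenvalues, so the model estimate holds and a dyadic rescaling along $M_j$ propagates it. Near the vertex one applies the Mellin transform in $\rho_0=|x|$; the eigenvalue-free line hypothesis for $\mathfrak{A}$ gives invertibility of the conical model, whose transverse part is itself a boundary value problem on the spherical polygon $\Omega=K\cap S^2$ with corners $\gamma_j$, and whose solution must be controlled in the corresponding $V$-scale on $\Omega$ (which is where the edge pencils $A_j$ re-enter). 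It is precisely this coupling of the vertex pencil $\mathfrak{A}$ with the edge pencils $A_j$ that forces the use of mixed weights.

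The technically most delicate regions are the corner-edge transition zones where $\rho_0$ and some $r_k$ are simultaneously small. There the natural variable is the quotient $r_k/\rho_0$ entering the norm in \eqref{norm}, and one decomposes $K$ into anisotropic dyadic boxes on which, after the corresponding affine rescaling, $-\Delta$ is uniformly elliptic and the weight collapses to a bounded factor; the local estimate on each box follows from the edge and interior estimates already obtained, and the weights $\rho_0^{p(\beta-l+|\alpha|)}\prod_k(r_k/\rho_0)^{p(\delta_k-l+|\alpha|)}$ are the ones that make the sum over all boxes converge. Summing the squared local estimates against a smooth partition of unity subordinate to this decomposition, the commutator terms arising from derivatives of the cut-offs land in $V_{\beta-1,\delta-1}^{l-1,2}(K)$ and are removed either by interpolation or by a compactness argument using the injectivity already established. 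Uniqueness is then immediate: for $f=0$ and $g_{j}=0$ the estimate \eqref{estimatesept15} forces $u=0$, since under \eqref{conditionsept15} the space $V_{\beta,\delta}^{l,2}(K)$ embeds into the energy space on which $a(\cdot,\cdot)$ is coercive.

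I expect the main obstacle to be the vertex analysis together with the corner-edge transition: well-posedness of the transverse conical problem in the $V$-spaces on $\Omega$ requires feeding the spectral information of the edge pencils $A_j$ into the vertex pencil $\mathfrak{A}$, and the simultaneous bookkeeping of the two degenerating weights in the overlap regions is exactly what makes the mixed-weight framework of \cite{MR10} unavoidable; a single Kondratiev weight would not close the estimate there.
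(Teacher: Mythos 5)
The paper does not prove this theorem at all: it is quoted verbatim (modulo a typographical leftover ``$l-k+1/2$'', which should read $l-1/2$ for the Dirichlet trace) from Maz'ya--Rossmann, \cite[Thm.~3.5.10, p.~127]{MR10}. So the only meaningful comparison is with the proof in that reference, and your outline does capture its architecture correctly: reduction to homogeneous traces, model problems in the dihedron and in the cone, Mellin transform, invertibility of the pencils $A_j(\lambda)$ and $\mathfrak{A}(\lambda)$ on the weight lines prescribed by \eqref{conditionsept15} and by $\mathrm{Re}\,\lambda=l-\beta-3/2$, and a localization/rescaling argument to glue the estimates. As a map of where the difficulties sit (vertex--edge interaction, the two degenerating weights), the sketch is sound.

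Two concrete points are wrong, however. First, the mapping property you invoke for the reduction to homogeneous data is misstated: from \eqref{norm} one checks that $\partial^{\gamma}$ maps $V_{\beta,\delta}^{l,2}(K)$ into $V_{\beta,\delta}^{l-|\gamma|,2}(K)$ \emph{with the same} $\beta$ and $\delta$ (substituting $\alpha'=\alpha+\gamma$ leaves the exponents $\beta-l+|\alpha'|$ and $\delta_k-l+|\alpha'|$ unchanged), not into $V_{\beta-|\gamma|,\delta-|\gamma|}^{l-|\gamma|,2}(K)$; on the unbounded cone these two target spaces are not comparable. Your conclusion $f+\Delta G\in V_{\beta,\delta}^{l-2,2}(K)$ survives, but only via the correct statement. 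Second, the uniqueness argument does not close: for general admissible $(\beta,\delta)$ the space $V_{\beta,\delta}^{l,2}(K)$ does \emph{not} embed into the energy space $H^1(K)$, and the Dirichlet form is not coercive there (the cone is unbounded and the weights may be strongly degenerate or growing). In \cite{MR10} uniqueness is obtained from the Fredholm theory of the parameter-dependent model operators together with the asymptotic expansions across weight lines --- essentially, injectivity is transported from the variational weight index to the given $(\beta,\delta)$ using the absence of eigenvalues in the intervening strip --- and cannot be replaced by the coercivity argument you propose. Also note that deriving $u=0$ from the a priori estimate \eqref{estimatesept15} alone is circular: an a priori bound on solutions that are already assumed to lie in $V_{\beta,\delta}^{l,2}(K)$ gives uniqueness only once one knows the estimate holds for every such solution, which is exactly the Fredholm input you would still have to supply.
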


We can now combine Proposition \ref{prop-dirichlet-inhom} and  Theorem \ref{theorem4},  
which together with  Theorem \ref{theoremMR} 
yield the following: 

\begin{theorem}[\bf Besov regularity for the Dirichlet problem] \label{theorem7}
Let $K$ be a  polyhedral cone 
and let the right-hand side of \eqref{poisson} satisfy $f\in V_{\beta,\delta}^{l-2,2}(K)\cap L_2(\tilde{K})$ with $l\in\mathbb{N}$, $\beta\in \mathbb{R}$, and $\delta=(\delta_1,\ldots, \delta_n)\in \mathbb{R}^n$. Concerning the boundary data let $g\in H^{s-1/2}(\tilde{\Gamma})$ for some $1/2<s<3/2$ with $g=g_j$ on $\Gamma_j$ and $g_{j}\in V_{\beta,\delta}^{l-1/2,2}(\Gamma_j)$ for $j=1,...,n$. 
Moreover, we assume that the line $\text{Re}\: \lambda=l-\beta-3/2$ does not contain eigenvalues of the pencil $\mathfrak{A}(\lambda)$ and that the components of $\delta$ satisfy \eqref{conditionsept15} and $\delta_j>0 $ for all $j=1,...,n$ and $l>\beta$. Then for the solution $u$ of \eqref{poisson} we have
\begin{equation}\label{ass-B}
 u\in B_{\tau,\tau}^r(\tilde{K}) \qquad  \text{for} \quad r<\min\Big\{  l,3(l-|\delta|),3 \overline{\alpha} \Big\} , \qquad \frac{1}{\tau}=\frac{r}{3}+\frac{1}{2},
\end{equation}
where $\overline{\alpha}(K)=\overline{\alpha}$  is the number from Proposition \ref{prop-dirichlet-inhom}, i.e., $\overline{\alpha}<\alpha_0$ in case of homogeneous boundary conditions ($g=0$) and $\overline{\alpha}=s$ in case of inhomogeneous boundary conditions.
\end{theorem}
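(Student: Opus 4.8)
The plan is to combine the three ingredients the theorem explicitly quotes: the fractional Sobolev regularity of Proposition \ref{prop-dirichlet-inhom}, the weighted Sobolev regularity of Theorem \ref{theoremMR}, and the embedding of Theorem \ref{theorem4}. First I would observe that under the hypotheses imposed on $f$, $g_j$, $\beta$, $\delta$ — in particular condition \eqref{conditionsept15} on the components of $\delta$ relative to the edge-pencil exponents $\delta_\pm^{(j)}$, the assumption that $\mathrm{Re}\,\lambda = l-\beta-3/2$ is free of eigenvalues of $\mathfrak{A}(\lambda)$, and $f\in V_{\beta,\delta}^{l-2,2}(K)$, $g_j\in V_{\beta,\delta}^{l-1/2,2}(\Gamma_j)$ (taking $k=1$ in Theorem \ref{theoremMR}) — Theorem \ref{theoremMR} applies verbatim and yields a unique solution $u\in V_{\beta,\delta}^{l,2}(K)$ of \eqref{poisson} on the infinite cone $K$, together with the a priori estimate \eqref{estimatesept15}.

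Next I would pass to the truncated cone $\tilde K$ and establish $u\in H^{\overline{\alpha}}(\tilde K)$. Here one invokes Proposition \ref{prop-dirichlet-inhom} with $D=\tilde K$: since $f\in L_2(\tilde K)$ and $g\in H^{s-1/2}(\tilde\Gamma)$ with $g|_{\Gamma_j}=g_j$, the decomposition $u=u_1+u_2$ into the harmonic part with boundary data $g$ and the part with right-hand side $f$ and homogeneous data (as spelled out before Proposition \ref{prop-dirichlet-inhom}) gives $u\in H^{\overline\alpha}(\tilde K)$, where $\overline\alpha<\alpha_0$ if $g=0$ and $\overline\alpha=s$ otherwise. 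A small point to address is that the solution produced by Theorem \ref{theoremMR} on $K$ and the one produced by Proposition \ref{prop-dirichlet-inhom} on $\tilde K$ agree on $\tilde K$: both solve the same Poisson problem with the same data, and uniqueness (which holds in $H^1$ for the Dirichlet problem on the Lipschitz domain $\tilde K$, and is part of both quoted statements) forces them to coincide. Hence the restriction of the $V$-space solution to $\tilde K$ lies in $V_{\beta,\delta}^{l,2}(\tilde K)\cap H^{\overline\alpha}(\tilde K)$; here $V_{\beta,\delta}^{l,2}(\tilde K)$ denotes the weighted space on the bounded cone, and membership follows since the weights on $\tilde K$ are, up to constants, controlled by the ones on $K$ (and $\rho_0\sim\rho$ away from the truncation).

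Finally I would apply the embedding of Theorem \ref{theorem4} on the bounded polyhedral cone $\tilde K$ with $p=2$, using $B_{2,2}^s=H^s$: since $\delta_j>0$ for all $j$, $l>\beta$, and $u\in V_{\beta,\delta}^{l,2}(\tilde K)\cap H^{\overline\alpha}(\tilde K)=V_{\beta,\delta}^{l,2}(\tilde K)\cap B_{2,2}^{\overline\alpha}(\tilde K)$, Theorem \ref{theorem4} (with $s$ there replaced by $\overline\alpha$) gives $u\in B_{\tau,\tau}^r(\tilde K)$ for all $r<\min\{l,\,3(l-|\delta|),\,3\overline\alpha\}$ with $\frac1\tau=\frac r3+\frac12$, which is exactly \eqref{ass-B}. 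The one genuinely delicate point — and the place I would expect to spend the most care — is the matching/uniqueness step linking the global $V$-space solution on $K$ to the $H^{\overline\alpha}$ solution on $\tilde K$, since the two source theorems are stated for different domains and in different scales; everything else is a bookkeeping verification that the parameter hypotheses of Theorems \ref{theoremMR} and \ref{theorem4} are met and a routine comparison of weights on $K$ versus $\tilde K$.
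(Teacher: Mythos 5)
Your proposal is correct and follows essentially the same route as the paper: Theorem \ref{theoremMR} gives $u\in V^{l,2}_{\beta,\delta}(K)$, Proposition \ref{prop-dirichlet-inhom} gives $u\in H^{\overline{\alpha}}(\tilde K)$, and Theorem \ref{theorem4} with $p=2$ and $B^{s'}_{2,2}=H^{s'}$ yields \eqref{ass-B}. Your extra care about identifying the $V$-space solution on $K$ with the $H^{\overline{\alpha}}$ solution on $\tilde K$ via uniqueness is a point the paper leaves implicit, but it does not change the argument.
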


\begin{proof}
 Our assumptions on $f$ together with the regularity results from Theorem \ref{theoremMR} and Proposition \ref{prop-dirichlet-inhom} yield the existence of a solution $u$ such that 
\[  u\in V^{l,2}_{\beta, \delta}(K)\cap H^{s'}(\tilde{K}) \qquad \text{for every}\quad s'\leq \overline{\alpha}. \]
Using  the fact that  $B^{s'}_{2,2}=H^{s'}$ in the sense of equivalent norms, 
 Theorem \ref{theorem4} implies that $u\in B_{\tau,\tau}^r(\tilde{K})$ with the restrictions from \eqref{ass-B}.

\end{proof}

\begin{remark}
\label{remark7}
   Theorem \ref{theorem7} also holds  for $\delta$ with negative components. In this case we  use  Theorem \ref{theorem5} in the proof of Theorem \ref{theorem7} above: Then instead of the parameter restriction 
   $r<\min\{  l,3(l-|\delta|),3\overline{\alpha} \}$ we obtain  that $r<3\overline{\alpha}$ and one of the conditions for  $r$ in \eqref{sept16} holds.
\end{remark}

\subsection{Regularity for the Neumann  and for mixed boundary value problems}

In order to obtain similar regularity results  for the Neumann and mixed boundary value problems we first need to adapt our weighted Sobolev spaces appropriately. Following \cite[Subsection~2.2, p.~438]{MR02} we now consider the following generalization of the spaces $V_{\beta,\delta}^{l,p}$. 

\begin{definition}[\bf Weighted Sobolev spaces $\mathcal{W}_{\beta,\delta}^{l,p}(K;\tilde{J})$]\label{def-W-space}
Let $K$ be a (bounded or unbounded) polyhedral cone in $\mathbb{R}^3$ and  $S=\{0\}\cup M_1 \cup...\cup M_n$. Moreover, 
let $\tilde{J}$ be a subset of $J=\{1,2,...,n   \}$, $l\in \mathbb{N}_0$, $1\leq p<\infty$, $\beta\in \mathbb{R}$, $\delta=(\delta_1,...,\delta_n)\in \mathbb{R}^n$ and $\delta_j>-2/p$ for $j\in J\setminus\tilde{J}$. 
Then the space $\mathcal{W}_{\beta,\delta}^{l,p}(K;\tilde{J})$ is defined as the closure of the set $$C_*^{\infty}(K,S):=\{ u|_K: \: u\in C_0^{\infty}(\mathbb{R}^3\setminus S) \}$$ 
with respect to the norm 
\begin{align}
\label{specWspace}
    \|u|\mathcal{W}_{\beta,\delta}^{l,p}(K;\tilde{J}) \|:=\displaystyle \left(\int_K \sum_{|\alpha|\leq l} \rho^{p(\beta-l+|\alpha|)}  \prod_{j\in \tilde{J}} \left(\frac{r_j}{\rho}\right)^{p(\delta_j-l+|\alpha|)}  \prod_{j\in J \setminus \tilde{J}} 
    \left(\frac{r_j}{\rho}\right)^{p\delta_j}  |\partial^{\alpha}u|^p\: dx
    \right)^{1/p}. 
\end{align}
Furthermore, we put $V_{\beta,\delta}^{l,p}(K):=\mathcal{W}_{\beta,\delta}^{l,p}(K;J)$ and $ W_{\beta,\delta}^{l,p}(K):=\mathcal{W}_{\beta,\delta}^{l,p}(K;\emptyset)$.
\end{definition}

\begin{remark}\label{rem-traces-cond}
\begin{itemize}
    \item[(i)] According to the definition of the norms one immediately obtains  that
\begin{align}
    \label{sept23}
    V_{\beta,\delta}^{l,p}(K) \subset \mathcal{W}_{\beta,\delta}^{l,p}(K;\tilde{J}) \subset  W_{\beta,\delta}^{l,p}(K) .
\end{align}
\item[(ii)] 
The trace spaces for $\mathcal{W}_{\beta,\delta}^{l,p}(K;\tilde{J})$ and $W_{\beta,\delta}^{l,p}(K)$ when $l\geq 1$ on $\Gamma_j$  can be defined in the same way as in Definition \ref{tracespaceV} and are denoted by $\mathcal{W}_{\beta,\delta}^{l-\frac 1p,p}(\Gamma_j;\tilde{J})$ and ${W}_{\beta,\delta}^{l-\frac 1p,p}(\Gamma_j)$, respectively.
\item[(iii)] The set $\tilde{J}$ will be chosen to contain all $j\in J$ such that the Dirichlet condition of problem \eqref{generalproblem} is given on at least one face $\Gamma_{j_{\pm}}$ adjacent to the edge $M_j$. In particular, for the Neumann problem this yields $\tilde{J}=\emptyset$. 
\item[(iv)] The restriction $\delta_j>-2/p$ for $j\in J\setminus \tilde{J}$ (i.e., Neumann boundary conditions on both sides) comes from the fact that for  Neumann boundary conditions we need to be a little careful when dealing with the traces on the faces $\Gamma_{j_{\pm}}$ adjacent to an edge $M_j$ (the functions $g_{j_{\pm}}$ need to satisfy certain 'compatibility' conditions). In particular, if $p=2$, it can be shown that the trace of a function $u\in \mathcal{W}_{\beta,\delta}^{l,p}(K;\tilde{J})$ with support near an edge $M_j$ with  $j\in J\setminus \tilde{J}$  belongs to the Sobolev-Slobodeckij space 
$$H_p^{l-\delta_j-2/p}(M_j)\qquad  \text{if}\qquad  -2/p<\delta_j<l-2/p$$ and $\delta_j+2/p$ is not integer, cf. \cite[Lem.~6.2.2]{MR10}. Note that for $p=2$ the Sobolev-Slobodeckij spaces coincide with our fractional Sobolev spaces $H^s$.  For $\delta_j\geq l-2/p$  the traces of functions $u\in \mathcal{W}_{\beta,\delta}^{l,p}(K;\tilde{J})$ in general do not exist on $M_j$, cf. \cite[p.~225]{MR10}. 
\end{itemize}
\end{remark}

In particular,  the extension operator $\mathfrak{E}$ from  Theorem \ref{extop} generalizes to the setting of   the spaces  $\mathcal{W}_{\beta,\delta}^{l,p}(K;\tilde{J}) $.

\begin{theorem}[\bf Extension operator for the spaces $\mathcal{W}_{\beta,\delta}^{l,p}(K;\tilde{J})$]
\label{extopneu}
Let $K\subset \mathbb{R}^3 $ be a  polyhedral cone and let $\tilde{J}$ be a subset of $J=\{1,2,...,n   \}$, $l\in \mathbb{N}_0$, $1\leq p<\infty$, $\beta\in \mathbb{R}$, $\delta=(\delta_1,...,\delta_n)\in \mathbb{R}^n$, and $\delta_j>-2/p$ for $j\in J\setminus\tilde{J}$. Then there exists a bounded linear extension operator $$\mathfrak{E}:\mathcal{W}_{\beta,\delta}^{l,p}(K;\tilde{J})\rightarrow
\mathcal{W}_{\beta,\delta}^{l,p}(\mathbb{R}^3;\tilde{J}).
$$ 
\end{theorem}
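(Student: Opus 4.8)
The plan is to use the \emph{same} extension operator $\mathfrak{E}$ as in Theorem \ref{extop}. Its construction in \cite{SS20} depends only on the geometry of the pair $(K,S)$ — a partition of unity subordinate to a Whitney-type dyadic decomposition of $K$ near the vertex and the edges, glued together with local (reflection- and Stein-type) extension operators on the individual pieces — and not on the weight parameters $\beta,\delta$. Hence it suffices to prove the norm bound $\|\mathfrak{E}u\,|\,\mathcal{W}^{l,p}_{\beta,\delta}(\mathbb{R}^3;\tilde{J})\|\lesssim\|u\,|\,\mathcal{W}^{l,p}_{\beta,\delta}(K;\tilde{J})\|$, which I would establish by revisiting the estimates behind Theorem \ref{extop} and tracking the modified weight in \eqref{specWspace}.

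First I would split the $\mathcal{W}$-norm according to the regions occurring in the dyadic decomposition: (i) away from $S$; (ii) near the vertex but away from all edges; (iii) near an edge $M_j$ with $j\in\tilde{J}$; (iv) near an edge $M_j$ with $j\in J\setminus\tilde{J}$. In the regions (i)--(iii) the factor of the weight \eqref{specWspace} that matters is precisely the one appearing in the $V$-space norm \eqref{norm} (with the edges now indexed by $\tilde{J}$), whereas the remaining factors $\prod_{j\in J\setminus\tilde{J}}(r_j/\rho)^{p\delta_j}$ are, on those pieces, smooth and bounded above and below, hence harmless pointwise multipliers (the multiplier property \eqref{okt15d} carries over to $\mathcal{W}$-spaces verbatim). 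Consequently the local estimates from the proof of Theorem \ref{extop} apply with only cosmetic changes, and the partition-of-unity reassembly is unchanged.

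The genuinely new point is region (iv), a neighbourhood of a Neumann edge $M_j$ with $j\notin\tilde{J}$, where the weight carries the \emph{order-independent} power $r_j^{p\delta_j}$ (times the $\rho$-factors, handled as in the $V$-case). After localizing and rescaling one is reduced to a model problem on a dihedral angle $\mathcal{D}_j$ carrying the weight $r^{p\delta_j}$ in the plane transverse to the edge. Since this weight does not depend on the order of differentiation, a Hestenes reflection across each of the two adjacent faces suffices: such an extension satisfies a pointwise estimate $|\partial^{\alpha}(\mathfrak{E}u)(x)|\lesssim\sum_{|\gamma|\le l}|\partial^{\gamma}u(\Phi(x))|$ with $\Phi$ the reflection, and $\Phi$ alters the distance to $M_j$ only by a factor depending on $\theta_j$, so the weight at $x$ is comparable to the weight at $\Phi(x)$; a change of variables then yields the local bound. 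The hypothesis $\delta_j>-2/p$ is exactly what makes $r^{p\delta_j}$ locally integrable in the transverse plane, so that the model space is well defined; no Muckenhoupt/$A_p$ theory is needed here precisely because the weight is order-independent.

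I expect step (iv) — and in particular the book-keeping in the overlap region where one is simultaneously close to the vertex and to a Neumann edge, so that the $\rho$-weight and the $r_j$-weight interact, as in the ``edges near the vertex'' part of the proof of Theorem \ref{extop} — to be the main technical obstacle. The resolution is that replacing the order-dependent edge weight $(r_j/\rho)^{p(\delta_j-l+|\alpha|)}$ of the $V$-space by the order-independent one $(r_j/\rho)^{p\delta_j}$ only decreases the moduli of the relevant exponents, so the dyadic summations carried out there converge a fortiori, and finally the global estimate follows by summing the local bounds against the partition of unity exactly as in \cite{SS20}.
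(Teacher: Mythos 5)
Your proposal is correct and follows essentially the same route as the paper: the paper's proof simply notes that the norm localization of \cite[Lem.~2.4]{SS20} carries over to the spaces $\mathcal{W}_{\beta,\delta}^{l,p}(K;\tilde{J})$ (citing \cite{MR02}), so that the construction behind Theorem \ref{extop} transfers mutatis mutandis, and your region-by-region verification --- reusing the same operator, treating the order-independent factors $(r_j/\rho)^{p\delta_j}$ as harmless where $r_j\sim\rho$, and checking weight comparability under the local reflections near Neumann edges with $\delta_j>-2/p$ ensuring transverse integrability --- is precisely the content of that ``mutatis mutandis''. Your write-up is in fact more explicit than the paper's two-line argument.
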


\begin{remark}
The norm of the space $ \mathcal{W}_{\beta,\delta}^{l,p}(\mathbb{R}^3;\tilde{J})$ is defined  similarly as in  \eqref{specWspace}  replacing the integral domain by $\mathbb{R}^3$.
\end{remark}

\begin{proof}
According to \cite[Sect.~2, p.~439]{MR02} the norm localization in \cite[Lem.~2.4]{SS20} holds also for the spaces $\mathcal{W}_{\beta,\delta}^{l,p}(K;\tilde{J})$. Therefore, \cite[Lem.~3.8]{SS20} and Theorem \ref{extop} can be transferred to these spaces mutatis mutandis, which proofs Theorem \ref{extopneu}.
\end{proof}

With this extension operator at  hand we can further refine the embedding results of Theorems \ref{theorem4} and \ref{theorem5}. 

\begin{theorem}[\bf Embeddings for the spaces $\mathcal{W}_{\beta,\delta}^{l,p}(K;\tilde{J})$]\label{embeddingBesovWspaces}
Let $K$ be a bounded polyhedral cone in $\mathbb{R}^3$. Moreover, let $\tilde{J}$ be a subset of $J=\{1,2,...,n   \}$, $l\in \mathbb{N}_0$, $1< p<\infty$, $\beta\in \mathbb{R}$ with $l>\beta$, and  $\delta=(\delta_1,...,\delta_n)\in \mathbb{R}^n$.  
Then  we have a continuous embedding
$$\mathcal{W}_{\beta,\delta}^{l,p}(K;\tilde{J})\cap B_{p,p}^s(K)\hookrightarrow B_{\tau,\tau}^r(K),\quad \frac{1}{\tau}=\frac{r}{3}+\frac{1}{p},$$
where either 
$$ r<\min\Big\{l,3(l-|\delta|), 3s\Big\} $$ 
if $\delta_j>0$ for all $j=1,...,n$ or  
\[r<3s \quad \text{and one of the conditions for  $r$ in \eqref{sept16} holds},  \]
if $\delta$ has  at least one negative component and $\delta_j>-\frac{2}{p}$ for all $j\in J\setminus \tilde{J}$.
\end{theorem}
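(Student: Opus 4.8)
The plan is to re-run the proofs of Theorems~\ref{theorem4} and~\ref{theorem5} essentially verbatim, substituting the extension operator of Theorem~\ref{extopneu} for the one of Theorem~\ref{extop}, and observing that the wavelet estimates only ever see a piece of the weighted norm that is common to the $V$- and the $\mathcal{W}$-spaces. In particular, no genuinely new estimate is needed; the point is to audit which parts of those two proofs actually depend on $u$.

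The key observation I would isolate first is the following. By Definition~\ref{def-W-space}, the summand of $\|u\,|\,\mathcal{W}_{\beta,\delta}^{l,p}(K;\tilde{J})\|^p$ belonging to the derivatives of order $|\alpha|=l$ equals
\[
\int_K \rho^{p\beta}\prod_{j\in\tilde{J}}\Big(\tfrac{r_j}{\rho}\Big)^{p\delta_j}\prod_{j\in J\setminus\tilde{J}}\Big(\tfrac{r_j}{\rho}\Big)^{p\delta_j}\,|\partial^{\alpha}u|^p\,dx
=\int_K |x|^{p(\beta-|\delta|)}\prod_{k=1}^n r_k(x)^{p\delta_k}\,|\partial^{\alpha}u|^p\,dx,
\]
which is exactly the order-$l$ summand of $\|u\,|\,V_{\beta,\delta}^{l,p}(K)\|^p$; the two norms differ only in the lower-order terms. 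Since in both proofs the coefficient estimate~\eqref{kl} (and its analogue in Theorem~\ref{theorem5}) rests on Whitney's approximation theorem together with the moment conditions of $\psi_I$, only the Sobolev seminorm $|\tilde{u}|_{W_p^l(Q(I))}$, i.e.\ derivatives of order exactly $l$, enters. Hence the quantities $\mu_I$ are literally unchanged, and the bound for term $II$ in \textit{Step 2} now reads
\[
\Big(\sum_{(I,\psi)\in\Lambda_j^0}\mu_I^p\Big)^{1/p}
\lesssim\Big(\sum_{|\alpha|=l}\int_K |x|^{p(\beta-|\delta|)}\prod_{k=1}^n r_k^{p\delta_k}\,|\partial^{\alpha}\tilde{u}|^p\,dx\Big)^{1/p}
\le\|u\,|\,\mathcal{W}_{\beta,\delta}^{l,p}(K;\tilde{J})\|,
\]
where $\tilde{u}=\mathfrak{E}u$ is the extension furnished by Theorem~\ref{extopneu}; note that the standing assumption $\delta_j>-2/p$ for $j\in J\setminus\tilde{J}$ is precisely what makes $\mathcal{W}_{\beta,\delta}^{l,p}(\mathbb{R}^3;\tilde{J})$ — and therefore Theorem~\ref{extopneu} — available.

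All remaining parts of the two proofs then transfer unchanged. The term $I$ in \textit{Step 2} depends only on $|I|$, $\rho_I$, $r_I$, $r_I^{\pm}$ and on the cardinality bounds for $\Lambda_{j,k}$, $\Lambda_{j,k,m}$, $\Lambda_{j,k,(m_1,m_2)}$, so all nine case distinctions of Theorem~\ref{theorem4} (valid when $\delta_j>0$ for all $j$) and the splitting into $|\delta|^+,|\delta|^-$ of Theorem~\ref{theorem5} (when $\delta$ has a negative component) reproduce verbatim, giving in the first case $r<\min\{l,3(l-|\delta|),3s\}$ and in the second case one of the conditions listed up to~\eqref{sept16}; and \textit{Steps 3} and \textit{4}, which use only that $\tilde{u}\in B_{p,p}^s(\mathbb{R}^3)$ together with the fact that the singular set has dimension $0$ at the vertex and $1$ along the edges, again produce the single restriction $r<3s$. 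Summing the resulting geometric series over $j$ yields~\eqref{normabecsles} with $\|u\,|\,V_{\beta,\delta}^{l,p}(K)\|$ replaced by $\|u\,|\,\mathcal{W}_{\beta,\delta}^{l,p}(K;\tilde{J})\|$, which is the assertion.

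The only real work, and the step I would be most careful with, is the audit sketched above: one must confirm that nowhere in the (lengthy) proofs of Theorems~\ref{theorem4} and~\ref{theorem5} is the full $V_{\beta,\delta}^{l,p}$-norm used in place of its order-$l$ part, so that the weaker control which $\mathcal{W}_{\beta,\delta}^{l,p}(K;\tilde{J})$ provides on low-order derivatives near the edges in $J\setminus\tilde{J}$ does no harm. I expect a purely mechanical crude embedding $\mathcal{W}_{\beta,\delta}^{l,p}(K;\tilde{J})\hookrightarrow V_{\beta,\tilde\delta}^{l,p}(K)$ (with $\tilde\delta_j=\delta_j+l$ for $j\in J\setminus\tilde{J}$) to be insufficient, since it would replace $|\delta|$ by the larger $|\delta|+l\cdot\#(J\setminus\tilde{J})$ in the smoothness bound; it is exactly the coincidence of the top-order weights that keeps $|\delta|$ itself in the statement.
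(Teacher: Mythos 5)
Your proposal is correct and coincides with the paper's own (very brief) proof: the paper likewise argues that only the top-order term $|\alpha|=l$ of the weighted norm enters the estimates of Theorems~\ref{theorem4} and~\ref{theorem5}, and that this term is identical for $V_{\beta,\delta}^{l,p}(K)$ and $\mathcal{W}_{\beta,\delta}^{l,p}(K;\tilde{J})$, with Theorem~\ref{extopneu} supplying the extension operator. Your audit of the Whitney/moment-condition step and the remark on why a crude embedding into a $V$-space would lose the bound $3(l-|\delta|)$ are accurate elaborations of the same argument.
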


\begin{proof}
The proof follows along the same lines as in Theorems \ref{theorem4} and \ref{theorem5} replacing the space $ V_{\beta,\delta}^{l,p}(K)$ by $\mathcal{W}_{\beta,\delta}^{l,p}(K;\tilde{J})$. In particular, we remark that in the estimates there we only considered the term with the highest derivatives $|\alpha|=l$ appearing in the norm of the spaces $V_{\beta,\delta}^{l,p}(K)$, which coincides with the corresponding term in the norm of the spaces $\mathcal{W}_{\beta,\delta}^{l,p}(K;\tilde{J})$. 
\end{proof}

\subsubsection{The Neumann problem}

In this subsection we study the Besov regularity of the solution of the Poisson equation with Neumann boundary conditions only, i.e., $J_0=\emptyset$ in \eqref{generalproblem}, which reads as:
\begin{equation}\label{neumannp}
    -\Delta u  =f \quad \text{in}\quad  K, \qquad 
    \frac{\partial}{\partial \nu}u =g_{j} \quad \text{on}\quad  \Gamma_j \quad\text{for}\quad j=1,...,n. 
\end{equation}


For the regularity of the  Neumann problem with homogeneous boundary conditions in fractional Sobolev spaces we can again rely on Proposition \ref{prop4.1}.  For inhomogeneous boundary values we use  \cite[Thm.~2.1]{Dah2001}, whose original proof can be found in \cite{Zan00}.

\begin{proposition}
\label{Zanger-Dahlke-0}
Let $D\subset \real^3$ be any bounded Lipschitz domain whose complement is connected. Moreover, let $g\in H^{s-3/2}_{1^\perp}(\partial D):=\{g\in H^{s-3/2}(\partial D): \ g(1)=0\}$ for some $1/2<s<3/2$ (here $H^r(\partial D)$ with $r<0$ stands for the dual space of $H^{-r}(\partial D)$). Then there exists a unique harmonic function $u$, i.e. $\Delta u=0$ in $D$, such that
\begin{align*}
    u\in H^s(D) \qquad \text{and}\qquad \frac{\partial u}{\partial \nu} =g \quad\text{on}\quad \partial D.
\end{align*}
\end{proposition}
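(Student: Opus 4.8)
\emph{The plan} is to establish Proposition~\ref{Zanger-Dahlke-0} by the boundary layer potential method, which is the natural route to the $L_2$-Neumann theory on Lipschitz domains (Jerison--Kenig, Verchota, Zanger). Let $E(z)=\tfrac{1}{4\pi|z|}$ be the fundamental solution of $-\Delta$ on $\real^3$ and, for a density $h$ on $\partial D$, set
\[
  \mathcal{S}h(x):=\int_{\partial D}E(x-y)\,h(y)\,d\sigma(y),\qquad x\in\real^3\setminus\partial D .
\]
Then $\mathcal{S}h$ is harmonic in $D$ and in the exterior domain, and its interior normal derivative equals $\partial_\nu(\mathcal{S}h)\big|_{\partial D}=\big(-\tfrac12 I+K^{\ast}\big)h$, where $K^{\ast}$ is the (adjoint) Neumann--Poincaré operator and the sign reflects the convention $-\Delta E=\delta$ with $\nu$ outward. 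Thus I would reduce the problem to solving $\big(-\tfrac12 I+K^{\ast}\big)h=g$ for a density $h$ in a suitable boundary space and then take $u:=\mathcal{S}h$.

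\emph{First}, as a warm-up and to fix normalisations, I would note that for $s=1$ the statement is elementary: the weak formulation $\int_D\nabla u\cdot\nabla v\,dx=\langle g,\,v|_{\partial D}\rangle$ for all $v\in H^1(D)$ is uniquely solvable modulo constants by Lax--Milgram on $H^1(D)/\real$, using the Poincaré--Wirtinger inequality and the compatibility condition $g(1)=0$, i.e. $\langle g,1\rangle=0$. \emph{Next}, for the full range $1/2<s<3/2$ (so that $s-\tfrac32\in(-1,0)$) I would invoke the mapping property $\mathcal{S}\colon H^{s-3/2}(\partial D)\to H^{s}(D)$ --- itself a regularity statement for the single layer potential on Lipschitz domains --- and the boundedness of $K^{\ast}$ on $H^{s-3/2}(\partial D)$. \emph{The core step} is the invertibility of $-\tfrac12 I+K^{\ast}$: via the Rellich--Ne\v{c}as identities one gets the a priori bound $\|h\|_{L_2(\partial D)}\lesssim\|(-\tfrac12 I+K^{\ast})h\|_{L_2(\partial D)}$, and combining this with a Fredholm-type argument that uses the connectedness of $\real^3\setminus\overline{D}$ to pin down the null space shows that $-\tfrac12 I+K^{\ast}$ maps $L_2(\partial D)$ boundedly onto $L_{2,0}(\partial D):=\{g\in L_2(\partial D):\langle g,1\rangle=0\}$ with bounded inverse; the range restriction to $L_{2,0}(\partial D)$ is automatic since $\int_{\partial D}\partial_\nu(\mathcal{S}h)\,d\sigma=\int_D\Delta(\mathcal{S}h)\,dx=0$. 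I would then transfer invertibility from $L_2(\partial D)$ to the scale $H^{s-3/2}(\partial D)$, $1/2<s<3/2$, using the square-function / fractional layer-potential estimates of Jerison--Kenig together with duality and interpolation.

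\emph{To conclude}, given $g\in H^{s-3/2}_{1^\perp}(\partial D)$ the core step produces $h\in H^{s-3/2}(\partial D)$ with $(-\tfrac12 I+K^{\ast})h=g$, and then $u:=\mathcal{S}h\in H^{s}(D)$ is harmonic with $\partial_\nu u=g$ on $\partial D$; uniqueness (modulo constants, resp.\ after the normalisation built into the layer representation) I would get from the energy identity $\int_D|\nabla u|^2\,dx=\langle\partial_\nu u,\,u|_{\partial D}\rangle=0$ for a harmonic $u$ with vanishing Neumann data, interpreted via non-tangential maximal functions when $s\le1$. \emph{The main obstacle} is the core step: on a general Lipschitz domain $K^{\ast}$ is \emph{not} compact, so classical Fredholm theory (available on $C^1$ domains) does not apply, and one must rely on the Rellich identity technique of Verchota and the harmonic-analysis estimates of Jerison--Kenig; the passage to negative-order Sobolev spaces on the boundary additionally requires the corresponding fractional-order bounds for layer potentials. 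For these facts I would cite \cite{JK95,Zan00} rather than reproduce them, which is consistent with how the surrounding results in this section are handled.
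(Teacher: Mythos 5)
The paper offers no proof of this proposition: it is imported verbatim from \cite[Thm.~2.1]{Dah2001}, whose original proof is in \cite{Zan00}, and your layer-potential sketch (single layer representation, Rellich/Verchota invertibility of $-\tfrac12 I+K^{\ast}$ on the mean-zero subspace, passage to the fractional boundary scale via the Jerison--Kenig estimates) is precisely the argument of those references, so it is consistent with how the paper handles this result. Your remark that uniqueness holds only modulo constants (or after a normalisation) is also correct and identifies the one point where the statement as quoted is slightly imprecise.
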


\begin{remark} 
Note that since we are interested in solutions of the Neumann problem, we  consider function spaces on the boundary with functions having mean-value $0$. This is reflected in choosing the spaces 
\[
H^{s-3/2}_{1^\perp}(\partial D)=\{g\in H^{s-3/2}(\partial D): \ g(1)=0\}, 
\]
where any $g\in H^{s-3/2}(\partial D)$  with $1/2<s<3/2$ can be identified as a linear
functional on the  function space $H^{3/2-s}(\partial D)$. 
Moreover, it can be shown that if  $\mathcal{H}\subset L_1(\partial D)$ is a dense subset of $H^{s-3/2}(\partial D)$  then 
\[
\mathcal{H}_{1^\perp}:=\left\{
f-\frac{1}{|\partial D|}\int_{\partial D}f \mathrm{d}\sigma : \ 
f\in \mathcal{H} 
\right\}
\]
is dense in $H^{s-3/2}_{1^\perp}(\partial D)$, cf. \cite[p.~1781]{Zan00}. 
\end{remark}

Proposition \ref{Zanger-Dahlke-0} now allows us to reduce the inhomogeneous Neumann problem 
\begin{equation}\label{neumannp-inhom}
    -\Delta u  =f \quad \text{in}\quad  D, \qquad 
    \frac{\partial}{\partial \nu }u =g \quad \text{on}\quad  \partial D, 
\end{equation}
to the homogeneous one similarly as explained for the Dirichlet problem after Proposition \ref{prop-dirichlet-inhom-0}. Therefore, in view of  Propositions \ref{prop4.1} and \ref{Zanger-Dahlke-0} we obtain   the following:

\begin{proposition}
[\bf Fractional Sobolev regularity for  the Neumann problem]
\label{Zanger-Dahlke}
Let $D\subset \real^3$ be any bounded Lipschitz domain whose complement is connected. Moreover, let $g\in H^{s-3/2}_{1^{\perp}}(\partial D)$ for some $1/2<s<3/2$ and  $\alpha_0>3/2$ be the constant from Proposition \ref{prop4.1}. Then there exists a unique solution $u\in H^{\overline{\alpha}}(D)$ of \eqref{neumannp-inhom} for every  $\overline{\alpha}<\alpha_0$ in case of homogeneous boundary conditions ($g=0$) and $\overline{\alpha}=s$ in case of inhomogeneous boundary conditions.
\end{proposition}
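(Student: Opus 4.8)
The plan is to reduce \eqref{neumannp-inhom} to the two subproblems already treated, exactly as was done for the Dirichlet problem after Proposition~\ref{prop-dirichlet-inhom-0}. I would write $u=u_1+u_2$, where $u_1$ is the harmonic function solving $\Delta u_1=0$ in $D$, $\partial u_1/\partial\nu=g$ on $\partial D$, and $u_2$ solves $-\Delta u_2=f$ in $D$, $\partial u_2/\partial\nu=0$ on $\partial D$. Then $u$ solves \eqref{neumannp-inhom} by linearity, and it only remains to locate each summand in the appropriate fractional Sobolev scale and to combine the two.

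First I would invoke Proposition~\ref{prop4.1} for the homogeneous Neumann problem to obtain $u_2\in H^{\overline\alpha}(D)$ for every $\overline\alpha<\alpha_0$. Next I would invoke Proposition~\ref{Zanger-Dahlke-0}: this is precisely where the hypotheses that $\real^3\setminus D$ is connected and that $g\in H^{s-3/2}_{1^{\perp}}(\partial D)$ with $1/2<s<3/2$ enter, and it yields a uniquely determined harmonic function $u_1\in H^s(D)$. In the inhomogeneous case, since $s<3/2<\alpha_0$ we may in particular take $\overline\alpha=s$ for $u_2$, so that $u=u_1+u_2\in H^s(D)$; in the homogeneous case $g=0$ forces $u_1\equiv 0$ by the uniqueness part of Proposition~\ref{Zanger-Dahlke-0}, hence $u=u_2\in H^{\overline\alpha}(D)$ for every $\overline\alpha<\alpha_0$. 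Uniqueness of $u$ is then inherited from the uniqueness of $u_1$ and $u_2$, understood in the same normalized sense (e.g.\ prescribing vanishing mean value over $D$) as in Propositions~\ref{prop4.1} and~\ref{Zanger-Dahlke-0}.

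Since both ingredients are already available, there is no genuine obstacle here; the only point requiring a little care is the bookkeeping of the mean-value and compatibility normalizations. The side condition $g(1)=0$ built into $H^{s-3/2}_{1^{\perp}}(\partial D)$ is exactly what makes the Neumann data admissible, and one should fix the (harmless) additive constant in the same way throughout, so that the decomposition $u=u_1+u_2$ is consistent and the asserted uniqueness is meaningful.
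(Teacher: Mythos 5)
Your proof follows exactly the route the paper takes: the paper's justification is precisely the reduction $u=u_1+u_2$ via Proposition~\ref{Zanger-Dahlke-0} for the harmonic part with inhomogeneous Neumann data and Proposition~\ref{prop4.1} for the homogeneous Neumann problem, stated in the paragraph preceding the proposition. Your additional remarks on the mean-value normalization and the compatibility of the data are a sensible (and slightly more careful) elaboration of the same argument.
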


Concerning the regularity in Sobolev spaces for the Neumann problem, we use 
 the following result of Maz'ya and Rossmann \cite[Section~6.1, p.~459]{MR02}.

\begin{theorem}[\bf Weighted Sobolev regularity for the Neumann Problem]
\label{dec10de}
Let $K$ be a  polyhedral cone in $\mathbb{R}^3$.
Moreover, let $l\in \mathbb{N}_0,\: l\geq 2$, $\beta\in \mathbb{R}$, and  $\delta=(\delta_1,...,\delta_n)\in \mathbb{R}^n$. 
Then the problem \eqref{neumannp} is uniquely solvable in $W_{\beta,\delta}^{l,2}(K)$ for arbitrary $f\in W_{\beta,\delta}^{l-2,2}(K)$ and $g_j\in W_{\beta,\delta}^{l-3/2,2}(\Gamma_j)$, $j=1,...,n$, if $l-\beta-3/2$ is not an eigenvalue of the pencil $\mathfrak{A}(\lambda)$ and the components of $\delta$ satisfy the inequalities
\begin{align}
\label{dec10}
    \max(l-\delta_{+}^{(j)},0)< \delta_j+1<l,\qquad j=1,...,n.  
\end{align}
\end{theorem}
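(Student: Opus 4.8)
This statement is quoted verbatim from \cite[Sect.~6.1]{MR02}, so in the paper it is used as a black box; let me nevertheless sketch the argument one would carry out to prove it, following the Kondratiev--Maz'ya--Rossmann scheme for elliptic problems on domains with conical points and edges. The plan has three stages: (i) solve the model problems attached to the vertex and to each edge of $K$; (ii) derive a weighted a priori estimate for \eqref{neumannp} by localization and gluing; (iii) combine this estimate with a variational solution to obtain existence, uniqueness and full regularity in $W^{l,2}_{\beta,\delta}(K)$.

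First I would treat the \emph{model problems}. Near the vertex $x=0$ the leading behaviour is that of the cone $K$ itself; applying the Mellin transform $\rho\mapsto\lambda$ turns $-\Delta u=f$, $\partial u/\partial\nu=g_j$ into a $\lambda$-dependent boundary value problem for the Laplace--Beltrami operator on the spherical cap $\Omega$, i.e.\ into the operator pencil $\mathfrak{A}(\lambda)$. The hypothesis that the line $\operatorname{Re}\lambda=l-\beta-3/2$ carries no eigenvalue of $\mathfrak{A}(\lambda)$ is precisely what renders this problem invertible with norm bounds uniform along that line (for an unbounded cone the same Mellin contour also governs the behaviour at infinity), and an inverse Mellin transform then produces the weighted estimate near the vertex; note that the Neumann eigenvalue $\lambda=0$ of $\mathfrak{A}$ is thereby automatically excluded, i.e.\ $\beta\neq l-3/2$. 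Near an edge $M_j$ but away from the vertex the leading behaviour is that of the dihedron $\mathcal{D}_j$; a partial Fourier transform along $M_j$ combined with a Mellin transform in the transversal polar radius $r$ reduces the problem to the edge pencil $A_j(\lambda)$ on $H^2(I_j)$, whose Neumann eigenvalues are $\lambda_{j,m}=(m-1)\pi/\theta_j$. The admissibility conditions \eqref{dec10}, i.e.\ $\max(l-\delta^{(j)}_+,0)<\delta_j+1<l$, put the relevant Mellin line strictly between consecutive eigenvalues of $A_j(\lambda)$ inside the strip $-\delta^{(j)}_-<\operatorname{Re}\lambda<\delta^{(j)}_+$, which yields the edge estimate in the weight $r_j^{\delta_j}$; the extra room in the lower bound (as compared with the $V$-space requirement) is exactly the manifestation of the weaker top-order weight exponent $r_j^{p\delta_j}$ used for Neumann edges, cf.\ Definition \ref{def-W-space} and Remark \ref{rem-traces-cond}.

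Second, I would \emph{glue}. Take a partition of unity on $K$ subordinate to a neighbourhood of the vertex, tubular neighbourhoods of the edges, and coordinate patches in the smooth interior and on the smooth parts of the faces, where classical $H^l$ elliptic (Neumann) regularity applies. Transplanting the pieces $\varphi_k u$ to the model domains, invoking the estimates of stage (i), and collecting the commutators $[\Delta,\varphi_k]$ --- which are of lower differential order, hence absorbable after an induction on $l$ --- yields
\[
\|u|W^{l,2}_{\beta,\delta}(K)\|\ \lesssim\ \|f|W^{l-2,2}_{\beta,\delta}(K)\|+\sum_{j=1}^n\|g_j|W^{l-3/2,2}_{\beta,\delta}(\Gamma_j)\|+\|u|W^{l-1,2}_{\beta-1,\delta-1}(K)\|,
\]
the last term being of lower order. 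Here the extension operator of Theorem \ref{extopneu} lets one pass freely between $K$ and $\mathbb{R}^3$ during the localization. By the Fredholm alternative, solvability now reduces to triviality of the homogeneous problem in $W^{l,2}_{\beta,\delta}(K)$; under \eqref{dec10} and the eigenvalue condition there are no admissible homogeneous vertex/edge solutions, so a weighted Liouville argument together with the energy identity forces $u\equiv 0$. Existence is then obtained by first constructing a weak solution via Lax--Milgram on the quotient of $H^1(K)$ by the constants --- legitimate because of the mean-value compatibility already built into the spaces $H^{s-3/2}_{1^\perp}$ appearing in Proposition \ref{Zanger-Dahlke} --- and bootstrapping its regularity with the a priori estimate.

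The step I expect to be the main obstacle is the edge analysis for Neumann data and its compatibility with the vertex analysis: the two weight families --- $\rho_0$ near the vertex and $r_j/\rho_0$ near the edges --- must be controlled simultaneously inside a single norm, and one has to show that \eqref{dec10}, together with the specific choice of top-order weight $r_j^{p\delta_j}$ rather than $r_j^{p(\delta_j-l+|\alpha|)}$, is exactly the parameter range in which \emph{both} localized estimates hold and glue without loss. The technical heart of this is the behaviour of the Neumann traces $g_j$ on faces adjacent to an edge, whose Sobolev--Slobodeckij regularity (and the attendant compatibility conditions) is recalled in Remark \ref{rem-traces-cond}(iv).
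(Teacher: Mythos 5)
You have correctly identified the key fact: the paper does not prove Theorem \ref{dec10de} at all, but imports it verbatim from Maz'ya--Rossmann \cite[Sect.~6.1, p.~459]{MR02}, so there is no internal proof to compare your sketch against. Your outline of the argument one would find in the cited source --- Mellin analysis of the vertex model problem via the pencil $\mathfrak{A}(\lambda)$, Fourier--Mellin analysis of the dihedral edge model problems via the pencils $A_j(\lambda)$ with Neumann eigenvalues $(m-1)\pi/\theta_j$, localization and gluing to get a weighted a priori estimate, and a Fredholm/uniqueness argument to upgrade a variational solution --- is the correct high-level scheme, and you rightly tie the two halves of condition \eqref{dec10} to the edge-pencil spectrum on one side and to the weaker top-order edge weight $r_j^{p\delta_j}$ of the $W$-spaces (hence the trace/compatibility discussion of Remark \ref{rem-traces-cond}(iv)) on the other.

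Two caveats. First, your appeal to the extension operator of Theorem \ref{extopneu} inside the localization step is out of place: that operator is a contribution of the present paper, postdates \cite{MR02}, and plays no role in the proof of this regularity theorem; the localization in Maz'ya--Rossmann is carried out with dyadic partitions of unity adapted to $\rho_0$ and $r_j$ and coordinate dilations onto the model cone and dihedra, not by extending to $\mathbb{R}^3$. Second, the existence step "Lax--Milgram on $H^1(K)$ modulo constants" is the bounded-domain picture; here $K$ is an infinite cone, so the variational solution must be produced in a weighted energy space (and the passage from it to $W^{l,2}_{\beta,\delta}(K)$ uses the a priori estimate together with the spectral conditions to rule out the homogeneous vertex and edge solutions, much as you say). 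These are presentation-level inaccuracies in a sketch of an external result, not gaps in the paper's own logic, since the paper's only obligation is to cite the theorem correctly --- which it does.
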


\begin{remark} As for condition \eqref{dec10} we refer to Remark \ref{rem-traces-cond} (iv). 
\end{remark}

With Propostion \ref{Zanger-Dahlke} and Theorem \ref{dec10de} we obtain the following counterpart of Theorem \ref{theorem7} for the Neumann problem.

\begin{theorem}[\bf Besov regularity for the Neumann problem]
\label{pdeforWspace}
Let $K$ be a polyhedral cone and 
let $l\in\mathbb{N},\: l\geq 2$, $\beta\in \mathbb{R}$, $l>\beta$, and $\delta=(\delta_1,\ldots, \delta_n)\in \mathbb{R}^n$.  
Further we assume that  the right-hand side of \eqref{neumannp} satisfies $f\in W_{\beta,\delta}^{l-2,2}(K)\cap L_2(\tilde{K})$, and for the boundary data we have $g\in H^{s-3/2}_{1^{\perp}}(\tilde{\Gamma})$ for some $1/2<s<3/2$ with  $g=g_j$ on $\Gamma_j$ and    $g_{j}\in W_{\beta,\delta}^{l-3/2,2}(\Gamma_j)$ for $j=1,...,n$. 
Assume that $l-\beta-3/2$ is not an eigenvalue of the pencil $\mathfrak{A}(\lambda)$ and the components of $\delta$ satisfy 
\begin{align}
\label{dec11}
    \max(l-\delta_{+}^{(j)},0)< \delta_j+1<l, \qquad j=1,...,n. 
\end{align}
Then for the solution $u$ of \eqref{neumannp} we have
\begin{align*}
    u\in B_{\tau,\tau}^r(\tilde{K}), \quad \frac{1}{\tau}=\frac{r}{3}+\frac{1}{2},
\end{align*}
where either 
$$ r<\min\Big\{{l},{3(l-|\delta|)}, {3\overline{\alpha}} \Big\} $$ 
if $\delta_j>0$ for all $j=1,...,d$ or  
\[r<3\overline{\alpha}  \quad \text{and one of the conditions for  $r$ in \eqref{sept16} holds},  \]
if $\delta$ has  at least one negative component, 
where $\overline{\alpha}(K)=\overline{\alpha}$ is the number from Proposition \ref{Zanger-Dahlke}. 
\end{theorem}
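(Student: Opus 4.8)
The plan is to mimic the structure of the proof of Theorem \ref{theorem7}, combining the fractional Sobolev regularity result for the Neumann problem with the weighted Sobolev regularity result and the embedding theorem for the $\mathcal{W}$-spaces. First I would invoke Theorem \ref{dec10de}: under the stated hypotheses on $l$, $\beta$, $\delta$ (in particular the pencil condition and the inequalities \eqref{dec11}, which are precisely \eqref{dec10}) together with $f\in W_{\beta,\delta}^{l-2,2}(K)$ and $g_j\in W_{\beta,\delta}^{l-3/2,2}(\Gamma_j)$, the Neumann problem \eqref{neumannp} has a unique solution $u\in W_{\beta,\delta}^{l,2}(K)$. Since $W_{\beta,\delta}^{l,2}(K)=\mathcal{W}_{\beta,\delta}^{l,2}(K;\emptyset)$ by Definition \ref{def-W-space}, this places $u$ in the scale covered by the embedding Theorem \ref{embeddingBesovWspaces}.

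Next I would obtain the fractional Sobolev regularity on the truncated cone. Applying Proposition \ref{Zanger-Dahlke} to the bounded Lipschitz domain $\tilde{K}$ (whose complement is connected), with $f\in L_2(\tilde{K})$ and boundary data $g\in H^{s-3/2}_{1^{\perp}}(\tilde{\Gamma})$, yields a solution in $H^{\overline{\alpha}}(\tilde{K})$ with $\overline{\alpha}<\alpha_0$ in the homogeneous case and $\overline{\alpha}=s$ in the inhomogeneous case; equivalently $u\in H^{s'}(\tilde{K})$ for every $s'\le \overline{\alpha}$. Here one should note, as in the Dirichlet case, that the solution furnished by the weighted theory and the one furnished by the $H^s$-theory coincide by uniqueness (both solve \eqref{neumannp} with the given data), so that the \emph{same} function $u$ lies in $W_{\beta,\delta}^{l,2}(K)\cap H^{s'}(\tilde{K})$ for all $s'\le\overline{\alpha}$.

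Finally, using $B^{s'}_{2,2}(\tilde{K})=H^{s'}(\tilde{K})$ in the sense of equivalent norms, I would apply Theorem \ref{embeddingBesovWspaces} with $p=2$, $\tilde{J}=\emptyset$ and $s=s'$ (letting $s'\uparrow\overline{\alpha}$). This gives $u\in B_{\tau,\tau}^r(\tilde{K})$ with $\frac{1}{\tau}=\frac{r}{3}+\frac12$, where $r<\min\{l,3(l-|\delta|),3\overline{\alpha}\}$ if all $\delta_j>0$, and $r<3\overline{\alpha}$ together with one of the conditions for $r$ in \eqref{sept16} if $\delta$ has a negative component (the side condition $\delta_j>-2/p=-1$ for $j\in J\setminus\tilde{J}=J$ being guaranteed by \eqref{dec11}, which forces $\delta_j>-1$). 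This is exactly the asserted conclusion.

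The proof is essentially a bookkeeping assembly of three black boxes, so there is no deep obstacle; the main point requiring care is the compatibility of the two solution concepts — one must check that the hypotheses on $\tilde{K}$ (bounded Lipschitz, connected complement) are met so that Proposition \ref{Zanger-Dahlke} applies, and that uniqueness in both the weighted and the fractional settings forces the two solutions to agree, so that the intersection space $W_{\beta,\delta}^{l,2}(K)\cap H^{s'}(\tilde{K})$ is genuinely nonempty and contains our $u$. One should also double-check that the parameter ranges in Theorem \ref{embeddingBesovWspaces} are compatible with $l\ge 2$ and the constraints \eqref{dec11}, but this is routine.
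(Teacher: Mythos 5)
Your proposal is correct and follows exactly the route of the paper's own (very brief) proof, which simply assembles Theorem \ref{dec10de}, Proposition \ref{Zanger-Dahlke}, and Theorem \ref{embeddingBesovWspaces} in place of their Dirichlet counterparts from Theorem \ref{theorem7}. Your additional checks (uniqueness forcing the two solutions to coincide, and $\delta_j>-1$ following from \eqref{dec11}) are sensible details that the paper leaves implicit.
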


\begin{proof}
The proof is a counterpart of Theorem \ref{theorem7} replacing Theorems \ref{theorem4} and  \ref{theorem5} by Theorem \ref{embeddingBesovWspaces}, Theorem \ref{theoremMR} by Theorem \ref{dec10de}, and Proposition \ref{prop-dirichlet-inhom} by Proposition \ref{Zanger-Dahlke}.
\end{proof}

\paragraph{Comparision with previous results} We compare Theorem \ref{pdeforWspace} with the result of Dahlke and Sickel in \cite[Thm.~3.1, p.~11]{HS1} which reads as follows: 

\begin{theorem}[\bf Besov regularity result from Dahlke/Sickel]
\label{theorem8}
Suppose that the right-hand side of \eqref{neumannp} satisfies $f\in W_{\beta,\delta}^{l-2,2}(K)\cap L_2(\tilde{K})$, where $l\geq 2$ is a natural number. Further assume that $g_j\in W_{\beta,\delta}^{l-3/2,2}(\Gamma_j),\: j=1,...,n$. Let $\alpha_0=\alpha_0(K)$ be the number defined in Proposition \ref{prop4.1}. Then there exists a countable set $E$ of complex numbers such that the following holds: If the real number $\beta$ and the vector $\delta$ are chosen such that $\beta<l$,
\begin{align}
\label{jan7}
    \lambda\neq l-\beta-3/2 \quad \text{for all} \: \: \lambda\in E,
\end{align}
and 
\begin{align*}
    \max(l-\frac{\pi}{\theta_j},0)<\delta_j +1<l, \quad j=1,...,n,
\end{align*}
where $\theta_j$ denotes the angle at the edge $M_j$, then the solution of \eqref{neumannp} satisfies
\begin{align*}
    u\in B_{\tau,\tau}^r(\tilde{K}),\qquad  r<\min\left\{l,\frac{3}{2}\alpha_0,3(l-|\delta|)\right\}, \quad  \frac{1}{\tau}=\frac{r}{3}+\frac{1}{2}.
\end{align*}
\end{theorem}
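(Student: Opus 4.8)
The plan is to mimic the three-step scheme already used for Theorems \ref{theorem7} and \ref{pdeforWspace}, but with the weaker embedding that was available before an extension operator for the weighted spaces (Theorem \ref{extopneu}) was at hand. The three ingredients are: (a) a Maz'ya--Rossmann type regularity result for the Neumann problem in the spaces $W_{\beta,\delta}^{l,2}(K)$, i.e. the analogue of Theorem \ref{dec10de}; (b) the classical fractional Sobolev regularity from Proposition \ref{prop4.1}; (c) an embedding of $W$-spaces into Besov spaces of the form displayed below.

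\textbf{Step 1 (weighted Sobolev regularity).} First I would apply the weighted regularity result for the Neumann problem to $f\in W_{\beta,\delta}^{l-2,2}(K)$ and $g_j\in W_{\beta,\delta}^{l-3/2,2}(\Gamma_j)$ to obtain a solution $u\in W_{\beta,\delta}^{l,2}(K)$ with the corresponding a priori estimate. Here the countable set $E$ is precisely the spectrum of the vertex pencil $\mathfrak{A}(\lambda)$, so condition \eqref{jan7} is exactly the requirement that the line $\mathrm{Re}\,\lambda=l-\beta-3/2$ be free of eigenvalues of $\mathfrak{A}$; the edge conditions $\max(l-\pi/\theta_j,0)<\delta_j+1<l$ are the translation of the abstract inequalities \eqref{dec10} once one recalls that for the pure Neumann problem the edge pencils $A_j(\lambda)$ have eigenvalues $(m-1)\pi/\theta_j$, hence $\delta_\pm^{(j)}=\pi/\theta_j$.

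\textbf{Step 2 (fractional Sobolev regularity).} Since $f\in L_2(\tilde K)$ and $\tilde K$ is a bounded Lipschitz domain, Proposition \ref{prop4.1} gives $u\in H^{s'}(\tilde K)$ for every $s'<\alpha_0$ in the homogeneous case, inhomogeneous boundary data being reduced to the homogeneous one by subtracting a harmonic lifting exactly as explained after Proposition \ref{prop-dirichlet-inhom-0}. Multiplying by a smooth cut-off supported in a ball around the vertex (a pointwise multiplier for both scales) one then has $u\in W_{\beta,\delta}^{l,2}(K)\cap H^{s'}(\tilde K)$ for all $s'<\alpha_0$.

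\textbf{Step 3 (embedding) and the main obstacle.} Finally I would invoke an embedding
\[
W_{\beta,\delta}^{l,2}(K)\cap H^{s'}(\tilde K)\hookrightarrow B_{\tau,\tau}^r(\tilde K),\qquad r<\min\Big\{l,\tfrac{3}{2}s',3(l-|\delta|)\Big\},\qquad \tfrac1\tau=\tfrac r3+\tfrac12,
\]
and let $s'\uparrow\alpha_0$ to reach $r<\min\{l,\tfrac32\alpha_0,3(l-|\delta|)\}$. The proof of this embedding runs along the lines of Section \ref{embeddings3}: decompose the wavelet index set into the blocks $\Lambda_{j,k,m}$ of \eqref{lambda}, estimate $|\langle u,\psi_I\rangle|$ via Whitney's approximation theorem together with the moment conditions and Hölder's inequality, and sum the resulting series in $j,k,m$, tracking the analogue of the nine cases of Theorem \ref{theorem4}. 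The crucial — and only genuinely delicate — point is the treatment of the wavelets whose supports meet the smooth faces $\Gamma_j$ of the cone (the counterpart of \emph{Step 2} in the proof of Theorem \ref{theorem4}): without the extension operator of Theorem \ref{extopneu} one cannot first extend $u$ to $\real^3$ and estimate on cubes $Q(I)$ straddling $\Gamma_j$; instead one must use boundary-adapted cubes and a reflection argument, which costs a factor and is exactly what forces the weaker restriction $r<\tfrac32 s'$ in place of $r<3s'$. Apart from this step, and the bookkeeping needed to check that the accumulated conditions collapse to the three displayed bounds, everything is a routine adaptation of the computations already carried out above.
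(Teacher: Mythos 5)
There is a fundamental mismatch here: the paper does not prove Theorem \ref{theorem8} at all. It is quoted verbatim from Dahlke and Sickel \cite[Thm.~3.1]{HS1} purely for the purpose of comparison with Theorem \ref{pdeforWspace}, and the paper explicitly states that, as formulated, it is \emph{not quite correct}. Your proposal therefore attempts to prove a statement that the authors themselves flag as deficient, and the two places where your argument would break are precisely the two defects they identify.

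First, your Step 2 does not go through. The hypotheses of Theorem \ref{theorem8} only give $g_j\in W_{\beta,\delta}^{l-3/2,2}(\Gamma_j)$; this does not imply $g\in H^{s-3/2}_{1^\perp}(\tilde{\Gamma})$, which is what Proposition \ref{Zanger-Dahlke-0} requires in order to construct the harmonic lifting for the Neumann problem (note also that the reduction you cite, ``as explained after Proposition \ref{prop-dirichlet-inhom-0}'', is the Dirichlet version; the Neumann version needs the mean-zero dual-space setup of Proposition \ref{Zanger-Dahlke-0}). Without that lifting you cannot conclude $u\in H^{s'}(\tilde K)$ for any $s'$ in the inhomogeneous case, so the intersection $W_{\beta,\delta}^{l,2}(K)\cap H^{s'}(\tilde K)$ needed for Step 3 is unavailable. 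Moreover, even when the lifting does apply, the inhomogeneous case only yields $\overline{\alpha}=s<3/2$, not every $s'<\alpha_0$, so your limit $s'\uparrow\alpha_0$ is only legitimate for homogeneous data. This is exactly the correction the paper makes: ``the assumptions on the boundary data $g_j$ are not sufficient in order [to] guarantee a solution $u\in H^s(\tilde K)$.'' Second, the edge conditions $\max(l-\pi/\theta_j,0)<\delta_j+1<l$ permit negative components of $\delta$ (e.g.\ $l=2$ and $\theta_j<\pi/2$ allow any $\delta_j\in(-1,1)$), and for such $\delta$ the embedding you invoke in Step 3 with the clean bound $r<\min\{l,\tfrac32 s',3(l-|\delta|)\}$ is false; the correct conditions are the far more intricate ones of Theorem \ref{theorem5} / \eqref{sept16}. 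Your remark that the nine-case bookkeeping of Theorem \ref{theorem4} ``collapses to the three displayed bounds'' silently assumes $\delta_j>0$ throughout. (Your structural observation that the factor $\tfrac32$ instead of $3$ in front of $s'$ stems from the absence of an extension operator for the weighted spaces is, on the other hand, exactly the diagnosis the paper gives in its comparison paragraph.)
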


 Most obvious, we see in comparison that the restriction in the upper bound for $r$ in terms of  $r<\frac{3}{2}\alpha_0$ in Theorem \ref{theorem8} was  improved to $r<3\alpha_0$ in Theorem \ref{pdeforWspace}. The improvement comes from the fact that we use the  extension operator from Theorem \ref{extopneu} for the proof of the embedding in Theorem \ref{embeddingBesovWspaces}. This way (in the  proof of Theorem \ref{embeddingBesovWspaces}) we can consider the wavelets with support intersecting the surface of the cone in the same way as the interior wavelets. Therefore, only the wavelets  which are close to the singular set, i.e.,  the vertex and the edges of the cone, have to be treated in a different manner. In the corresponding proof for the embedding of Dahlke and Sickel there is no such extension operator and therefore, they have to treat these wavelets as the other boundary wavelets, which leads to the additional restriction on $r$.\\
 Moreover, we remark that Theorem \ref{theorem8} does not hold in this form for $\delta$ with negative components. In this case one requires the additional assumptions from \eqref{sept16}, since the estimates when the components of $\delta$ are negative are more delicate, cf. the proof of Theorem \ref{theorem5} compared to the proof of Theorem \ref{theorem4}. \\  
 There is one more apparent difference between the two theorems: In Theorem \ref{pdeforWspace} we require that $l-\beta-3/2$ is not an eigenvalue of the pencil $\mathfrak{A}(\lambda)$, but  according to \cite[p.~97, 292/293]{MR10} this is equivalent to the condition  \eqref{jan7} in Theorem \ref{theorem8}.\\
 Finally, let us remark that Theorem \ref{theorem8} above as stated in \cite{HS1}  is not quite correct since the assumptions on the boundary data $g_j$ are not sufficient in order guarantee a solution $u\in H^s(\tilde{K})$. One has to add our assumptions from  Theorem \ref{pdeforWspace} for  inhomogeneous boundary data.

 \begin{remark}
 Let us mention that there is another regularity result of Dahlke  in \cite[Thm.~3.3]{Dah2001}, which gives  for the  homogeneous Neumann problem \eqref{neumannp-inhom} on a bounded Lipschitz domain $D$ with $f=0$ and 
   boundary values $g\in H^{s-\frac 32}(\partial D)$,  $1/2<s<3/2$, that   the solution $u$  belongs to $B^r_{\tau,\tau}(D)$ for $0<r<\frac{3}{2}s+\frac{3}{4}$. It is obtained using a bootstrapping strategy and also improves the bound  $r<\frac{3}{2}s$.  However, we see that our restriction  $r<3s$  is even better (but restricted to polyhedral cones $K$).  
 \end{remark}

\subsubsection{Mixed boundary value problems}

We consider now mixed boundary value problems \eqref{generalproblem}, where sets $J_0$ and $J_1$ are both supposed to be non-empty, i.e.,  some of the faces ($\Gamma_j$ with $j\in J_0$) of the cone satisfy  Dirichlet  and the other ones ($\Gamma_j$ with $j\in J_1$) Neumann boundary conditions. 


Moreover, we denote by $\tilde{J}$ the set of all $j\in J$ such that the Dirichlet condition is given on at least one face $\Gamma_k$ adjacent to the edge $M_j$, i.e., $M_j\subset \overline{\Gamma}_k$ for at least one $k\in J_0$.

\paragraph{Fractional Sobolev regularity for mixed problems} 
 Concerning the fractional Sobolev regularity of mixed boundary value problems, we need a counterpart of Proposition \ref{prop4.1}, 
 since it holds only for pure Dirichlet or pure Neumann boundary conditions. For this we  use the following result of Grisvard, whose proof can be found in \cite[Thm.~2.6.3, p.~75]{Gris}.


\begin{proposition}
\label{propGris}
Let $D\subset \mathbb{R}^3$ be a bounded polyhedral domain. The open neighbourhood near a vertex of $D$ is a polyhedral cone $K$ which in spherical coordinates can be described as  
$$K=\{(\rho,\omega): \ 0<\rho<c, \ \omega\in \Omega:=K\cap S^2\}.$$  
We denote by $\Delta_{\omega}$ the Laplace-Beltrami operator on $\Omega$ with eigenvalues $\widetilde{\lambda}_l $  and  corresponding orthonormalized eigenfunctions $\psi_l$, i.e., 
\[
-\Delta_{\omega}\psi_l=\widetilde{\lambda}_l\psi_l \quad \text{on}\quad \Omega. 
\]
For $f\in L^2(D)$ let $u$ be the solution of \eqref{generalproblem} with homogeneous boundary conditions. Then there exists unique numbers $c_l$ such that
\begin{align}
    \label{uregGrisvard}
    u_{\text{reg}}=u-\displaystyle\sum_l c_l \rho^{-\frac{1}{2}+ \sqrt{ \widetilde{\lambda}_l+\frac{1}{4}}} \psi_l(\omega) \in H^s (K), 
\end{align}
where  the sum is over all $l$ such that $0\leq \widetilde{\lambda}_l\leq s^2-2s+\frac{3}{4}$,   $s\leq 2$ with $s<\Lambda+1$ and 
\begin{align}
    \label{LambdaGris}
    \Lambda:=\min \{ \lambda_{j,m} \in (0,1): \ j=1,\ldots, n;  \  m\geq 1 \},
\end{align}
where $\lambda_{j,m} $ are the eigenvalues at the edge $M_{j}$.

\end{proposition}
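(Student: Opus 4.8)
The plan is to localise with a partition of unity adapted to the singular set $S=\{0\}\cup M_1\cup\dots\cup M_n$, to invoke the classical local singularity theory in each piece, and then to glue the results. Fix a partition $1=\phi_0+\sum_{j=1}^n\phi_j+\phi_\infty$ on a neighbourhood of $0$ in $\overline{K}$ in which $\phi_0$ is supported in a small ball $B(0,\varepsilon)$, each $\phi_j$ is supported in a conical neighbourhood of the edge $M_j$ that stays away from the vertex, and $\phi_\infty$ is supported away from all of $S$. On the support of $\phi_\infty$ the problem is a smooth interior/boundary problem for $-\Delta$ with right-hand side in $L_2$, so interior and smooth-boundary elliptic regularity give $\phi_\infty u\in H^2\subset H^s$ for every $s\le 2$, with nothing to subtract there.

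Near an edge $M_j$ but bounded away from the vertex, $K$ coincides with the product of a two-dimensional wedge of opening $\theta_j$ and the edge direction, so that $\phi_j u$ solves, up to compactly supported lower-order terms, the Laplace equation on this wedge with the prescribed homogeneous Dirichlet, Neumann or mixed conditions. Applying a partial Fourier transform along the edge together with the two-dimensional corner theory (as in \cite{Gris}), $\phi_j u$ decomposes into a part in $H^s$ plus a finite sum of edge singular functions $r_j^{\lambda_{j,m}}\eta_{j,m}(\varphi)$ over the positive edge eigenvalues with $\lambda_{j,m}<s-1$. By the definition \eqref{LambdaGris} of $\Lambda$ together with the hypotheses $s\le 2$ and $s<\Lambda+1$ one has $s-1<\Lambda\le\lambda_{j,m}$ for every edge eigenvalue lying in $(0,1)$, while every remaining edge eigenvalue is $\ge1>s-1$; hence no edge singular function occurs and $\phi_j u\in H^s$. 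The same inequality shows that the candidate vertex singular functions $\rho^{-1/2+\sqrt{\widetilde{\lambda}_l+1/4}}\psi_l(\omega)$ — whose angular factors $\psi_l$ carry only corner singularities of exponents $\lambda_{j,m}$ at the points $\overline{M}_j\cap S^2$ — already lie in $H^s(K\setminus B(0,\varepsilon/2))$.

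The heart of the matter is the neighbourhood of the vertex. In spherical coordinates $(\rho,\omega)$ one has $\rho^2\Delta=(\rho\partial_\rho)^2+(\rho\partial_\rho)+\Delta_\omega$, so the Mellin transform in $\rho$ converts the localised equation $-\Delta(\phi_0 u)=\phi_0 f+(\text{lower-order terms supported away from }0)$ into a parameter-dependent problem on the spherical polygon $\Omega$ whose inverse involves the resolvent of the operator pencil $\mathfrak{A}(\lambda)$; since $\mathfrak{A}(\lambda)$ acts essentially as $-\Delta_\omega-\lambda(\lambda+1)$, its spectrum is $\{\Lambda_{\pm l}=-\tfrac12\pm\sqrt{\widetilde{\lambda}_l+\tfrac14}\}$, and $\Lambda_{+l}\ge0$ while $\Lambda_{-l}\le-\tfrac12$. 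Starting from the a priori membership $u\in H^1$ near the vertex — which, via a Hardy-type inequality, places the Mellin inversion contour on the line $\text{Re}\,\lambda=-\tfrac12$, free of poles once $J_0\neq\emptyset$ forces $\widetilde{\lambda}_l>0$ — and shifting the contour to the line $\text{Re}\,\lambda=s-\tfrac32$ that governs $H^s$ near a vertex in $\real^3$, one picks up residues exactly at the eigenvalues $\Lambda_{+l}$ with $\Lambda_{+l}\le s-\tfrac32$. By the elementary equivalence $\Lambda_{+l}\le s-\tfrac32\iff\widetilde{\lambda}_l\le s^2-2s+\tfrac34$, these are precisely the indices occurring in the sum of \eqref{uregGrisvard}; each residue contributes a term $c_l\rho^{\Lambda_{+l}}\psi_l(\omega)$, and the shifted contour integral is the regular remainder, which lies in $H^s(B(0,\varepsilon))$. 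Gluing is then immediate: on $K\setminus B(0,\varepsilon/2)$ the function $u-\sum_l c_l\rho^{\Lambda_{+l}}\psi_l$ lies in $H^s$ by the edge and interior analysis, while on $B(0,\varepsilon)$ it equals the regular remainder; a partition-of-unity argument yields $u_{\mathrm{reg}}\in H^s(K)$. Uniqueness of the $c_l$ follows since the $\rho^{\Lambda_{+l}}\psi_l$ are linearly independent modulo $H^s$ near the vertex, by orthonormality of the $\psi_l$ in $L_2(\Omega)$ and distinctness of the exponents $\Lambda_{+l}$.

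\textbf{The main obstacle.} The delicate point is not any single region but the interaction of vertex and edge asymptotics. Since $\Omega$ is a spherical polygon and not smooth, the Mellin argument above cannot be run in plain $H^s$ but only in Kondratiev-type spaces on $K$ carrying weights for the distance to the vertex \emph{and} to the edges (the $V$- and $\mathcal{W}$-spaces of this paper). One must therefore (i) first establish that the weak solution lies in such a weighted space (a Hardy/Poincaré-type step), (ii) carry out the Mellin/residue argument with that weighted space as target, and (iii) convert weighted-space membership back to $H^s$ near the vertex — a conversion that is legitimate exactly because the restriction $s<\Lambda+1$ keeps $s$ below the edge-singularity threshold, so the edge weights cost nothing. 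For the mixed problem one must in addition identify the operator pencils $A_j(\lambda)$ and $\mathfrak{A}(\lambda)$ correctly with the Dirichlet/Neumann pattern encoded by $J_0,J_1$ and $\tilde{J}$, which is what fixes the eigenvalues in \eqref{lambdajkm}.
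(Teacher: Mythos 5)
You should first be aware that the paper does not prove this proposition at all: it is imported verbatim from Grisvard, with the proof attributed to \cite[Thm.~2.6.3, p.~75]{Gris}. So there is no in-paper argument to compare against; the relevant comparison is with the classical proof in the cited reference. Your sketch follows exactly that classical route (localization around the vertex, the edges, and the smooth part; two-dimensional corner theory in the wedge cross-sections; Mellin transform and residues at the eigenvalues of the vertex pencil $\mathfrak{A}(\lambda)$), and your numerical identifications are all correct: $\rho^2\Delta=(\rho\partial_\rho)^2+(\rho\partial_\rho)+\Delta_\omega$, the pencil eigenvalues $\Lambda_{\pm l}=-\tfrac12\pm\sqrt{\widetilde{\lambda}_l+\tfrac14}$, the equivalence $\Lambda_{+l}\le s-\tfrac32\iff\widetilde{\lambda}_l\le s^2-2s+\tfrac34$ (which is consistent with \eqref{febr2}), and the role of $s<\Lambda+1$ in suppressing the edge singular functions.

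The genuine gap is the one you name yourself in your final paragraph. The statement that the shifted contour integral ``is the regular remainder, which lies in $H^s(B(0,\varepsilon))$'' is precisely what has to be proved, and it is not a formality: because $\Omega$ is a spherical polygon, the resolvent $\mathfrak{A}(\lambda)^{-1}$ maps into a space of functions on $\Omega$ that themselves carry corner singularities at the points $\overline{M}_j\cap S^2$, so the contour-shift argument a priori yields membership only in a Kondratiev-type space with edge weights, and uniform resolvent bounds on the shifted line as $|\mathrm{Im}\,\lambda|\to\infty$ must be established before the contour can be moved at all. Your items (i)--(iii) are exactly the content of Grisvard's theorem, and listing them as ``one must therefore'' leaves the proof as a program rather than an argument. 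Two smaller points: the parenthetical claim that the line $\mathrm{Re}\,\lambda=-\tfrac12$ is pole-free ``once $J_0\neq\emptyset$ forces $\widetilde{\lambda}_l>0$'' is both unnecessary and slightly misleading --- since $\Lambda_{+l}\ge 0$ and $\Lambda_{-l}\le -1$ for every $\widetilde{\lambda}_l\ge 0$, that line avoids the spectrum in the Neumann case too; and at the endpoint $s=2$ your exclusion of edge singular functions via ``every remaining edge eigenvalue is $\ge 1>s-1$'' degenerates to a non-strict inequality when some $\lambda_{j,m}=1$, a borderline case the cited proof has to treat separately.
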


\begin{remark}
The function $u_{\text{reg}}$ is the regular part of $u$ and the terms $ \rho^{-\frac{1}{2}+ \sqrt{ \widetilde{\lambda}_l+\frac{1}{4}}} \psi_l$ constitute the singular part of $u$  denoted  by $u_{\text{sing}}$. 
\end{remark}

Using this result of Grisvard we obtain the following substitute of  Proposition \ref{prop4.1} for mixed boundary value problems: 

\begin{proposition}[\bf Fractional Sobolev regularity for  mixed problems with homogeneous boundary values]
\label{H5/4theorie}
Let $D$ be a bounded polyhedral domain in $\mathbb{R}^3$. Then there exists a number $\alpha_0\geq 5/4$ such that for every $f\in L_2(D)$ the solution $u$ of the mixed boundary value problem   \eqref{generalproblem} with homogeneous boundary conditions belongs to $H^s(D)$ for every $s<\alpha_0$.
\end{proposition}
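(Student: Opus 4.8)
The plan is to derive the statement from Grisvard's vertex decomposition in Proposition~\ref{propGris}, supplemented by the classical two\nobreakdash-dimensional corner analysis along the edges and by interior and flat\nobreakdash-boundary elliptic regularity. (Existence and uniqueness of the weak solution $u$ follows from Lax--Milgram, since $J_0\neq\emptyset$ makes the Dirichlet form coercive on $\{v\in H^1(D):\ v=0\text{ on }\Gamma_j,\ j\in J_0\}$.) Fix a vertex of $D$ with local cone $K$. By Proposition~\ref{propGris}, for every $s\le 2$ with $s<\Lambda+1$ one has $u=u_{\mathrm{reg}}+u_{\mathrm{sing}}$, where $u_{\mathrm{reg}}\in H^s(K)$ and $u_{\mathrm{sing}}=\sum_l c_l\,\rho^{-1/2+\sqrt{\widetilde\lambda_l+1/4}}\psi_l(\omega)$, the sum running over those $l$ with $0\le\widetilde\lambda_l\le s^2-2s+\tfrac34$. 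The elementary point is that $s^2-2s+\tfrac34=(s-1)^2-\tfrac14<0$ for all $s\in(\tfrac12,\tfrac32)$; since a Dirichlet condition is prescribed on at least one face ($J_0\neq\emptyset$), the eigenvalues $\widetilde\lambda_l$ of the Laplace--Beltrami operator on $\Omega$ with the induced boundary conditions are nonnegative, so the index set defining $u_{\mathrm{sing}}$ is empty for $s\in(\tfrac12,\tfrac32)$. Hence $u=u_{\mathrm{reg}}\in H^s(K)$ for every $s\in(\tfrac12,\min\{\tfrac32,\Lambda+1\})$, and a fortiori (using $H^1(K)\hookrightarrow H^{s}(K)$ for $s\le1$ and $\Lambda+1>1$) for every $s<\min\{\tfrac32,\Lambda+1\}$.

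Next I would bound $\Lambda$ from below using the explicit edge eigenvalues in \eqref{lambdajkm}. For an edge $M_j$ whose two adjacent faces carry the same type of boundary condition, the smallest positive eigenvalue of the edge pencil equals $\pi/\theta_j\ge\tfrac12$ (for a pure Neumann edge the eigenvalue $0$ corresponds to a smooth contribution and is irrelevant), while for an edge at which one adjacent face is Dirichlet and the other Neumann it equals $\lambda_{j,1}=\pi/(2\theta_j)$. Since $0<\theta_j\le 2\pi$, in every case the smallest positive edge eigenvalue $\mu_j$ satisfies $\mu_j\ge\tfrac14$. In particular every $\lambda_{j,m}\in(0,1)$ is $\ge\tfrac14$, so $\Lambda\ge\tfrac14$ (with the usual convention that the restriction $s<\Lambda+1$ is void when no edge eigenvalue lies in $(0,1)$), whence $\min\{\tfrac32,\Lambda+1\}\ge\tfrac54$.

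It remains to globalise. I would take a smooth partition of unity subordinate to a finite cover of $\overline D$ by neighbourhoods of the vertices (treated above), tubular neighbourhoods of the edges $M_j$ that stay away from the vertices (where, after straightening the edge, the problem decouples into a two\nobreakdash-dimensional wedge problem of opening $\theta_j$, and the classical corner theory gives $H^s$\nobreakdash-regularity for $s<1+\mu_j$), neighbourhoods of interior points of the smooth faces $\Gamma_j$ (where $f\in L_2$ together with homogeneous Dirichlet or Neumann data yields $H^2$\nobreakdash-regularity), and interior balls (again $H^2$). Multiplying $u$ by the partition of unity and summing the local norms gives $u\in H^s(D)$ for every $s<\alpha_0$ with
\[
\alpha_0:=\min\Bigl\{\tfrac32,\ \Lambda+1,\ \min_j(1+\mu_j)\Bigr\}\ \ge\ \tfrac54 .
\]

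The analytic content of the argument is negligible: it rests on the inequality $(s-1)^2<\tfrac14$ on $(\tfrac12,\tfrac32)$ and on $\theta_j\le 2\pi$. Accordingly, the main obstacle is organisational rather than computational --- one has to invoke the two\nobreakdash-dimensional wedge regularity theory along the edges with exactly the right boundary data, make sure the vertex singular exponents $-\tfrac12+\sqrt{\widetilde\lambda_l+\tfrac14}$ and the edge eigenvalues are correctly matched, and verify that the various local regularity statements patch together to a genuine global $H^s(D)$\nobreakdash-estimate.
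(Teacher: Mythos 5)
Your proof is correct, and its decisive quantitative step --- bounding $\Lambda\ge\tfrac14$ from the mixed-edge eigenvalues $\lambda_{j,1}=\pi/(2\theta_j)$ together with $\theta_j\le 2\pi$, whence $\min\{\tfrac32,\Lambda+1\}\ge\tfrac54$ --- is exactly the one the paper uses. Where you genuinely diverge is in the treatment of the vertex singular part. You observe that $s^2-2s+\tfrac34=(s-1)^2-\tfrac14<0\le\widetilde{\lambda}_l$ for $s\in(\tfrac12,\tfrac32)$, so the index set in Proposition \ref{propGris} is empty and $u=u_{\mathrm{reg}}$ outright; given the statement of Proposition \ref{propGris} as quoted, this is airtight and makes the singular part a non-issue. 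The paper instead keeps $u_{\mathrm{sing}}$ and shows separately that each term $\rho^{-1/2+\sqrt{\widetilde{\lambda}_l+1/4}}\psi_l$ lies in $H^s(K)$ for $s<\tfrac54$, by combining $\psi_l\in H^s(\Omega)$ for $s<\tfrac54$ (Grisvard, Lem.~2.6.2), the computation $\rho^{\lambda}\in H^{3/2+\varepsilon}(K)$, and a Runst--Sickel multiplier estimate; your shortcut renders that machinery unnecessary in the range $s<\tfrac32$, while the paper's version is the one that would survive if one pushed $s$ past $\tfrac32$, where the sum becomes nonempty and the limited regularity of the $\psi_l$ is the binding constraint. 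You also make explicit the globalisation from vertex neighbourhoods to all of $D$ (edge tubes, smooth faces, interior) via a partition of unity; the paper leaves this step implicit, and your edge bound $s<1+\mu_j$ is consistent with the condition $s<\Lambda+1$ already encoded in Grisvard's vertex statement, so nothing is lost there. In short: same skeleton, a cleaner handling of the singular sum, and a more carefully stated localisation.
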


\begin{proof}
We show first that the number $\Lambda\geq \frac 14$ in  Proposition \ref{propGris}. According to \cite[p.~72]{Gris} the eigenvalues $\lambda_{j,m}>0$, $m\in \mathbb{N}$,  at an edge $\ M_j=\overline{\Gamma}_j \cap \overline{\Gamma}_k\ $ are given by 
\begin{align}
    \label{lambdajkm_neu}
    \lambda_{j,m} &\ =\ m \frac{\pi}{\theta_j} &\quad &\text{for}\quad  j,\ k\in J_0,   \notag\\
    \lambda_{j,m}&\ =\ (m-1) \frac{\pi}{\theta_j} &\quad &\text{for}\quad j,\ k\in  J_1, \notag\\
    \lambda_{j,m}&= \left(m-\frac{1}{2}\right) \frac{\pi}{\theta_j} &\quad &\text{for}\quad j\in J_0 \:\text{ and }\: k \in J_1 \quad \text{or} \quad j\in J_1 \:\text{ and }\: k\in  J_0,
\end{align}
where $\theta_j$ denotes the angle at the edge $M_j$. According to the definition of $\Lambda$ in \eqref{LambdaGris} we obtain the smallest eigenvalues in \eqref{lambdajkm} if we choose $\theta_j=2\pi$, which  in the first line for $m=1$ gives $\lambda_{j,1}=\frac{1}{2}$, in the second line for $m=2$ we get $ \lambda_{j,2}=\frac{1}{2}$ (here $m=1$ leads to the eigenvalue $\lambda_{j,1}=0$ which is not admissible in the set $\Lambda$), and in the last line $m=1$ yields  $\lambda_{j,1}=\frac{1}{4}$. Since we have mixed boundary condition, there exist at least one edge $M_j$ of the cone with two adjacent faces  such that one of them has Dirichlet and the other one  Neumann boundary conditions. Therefore, the eigenvalues in the third line of \eqref{lambdajkm} appear and we deduce  $\Lambda\geq \frac{1}{4}$. Thus, according to Proposition \ref{propGris} we have
\begin{align}
    \label{ureg}
  u_\text{reg} \in H^{s}(K) \quad \text{for}\quad s< \frac{5}{4}.
\end{align}
We now consider the singular part of $u$. According to \cite[Lem.~2.6.2, p.~75]{Gris} the eigenfunctions satisfy $\psi_l\in H^{s}(\Omega)$ for $s< \frac{5}{4}$, where $\Omega=K\cap S^2$   is the intersection of the cone with the unit sphere $S^2$. Since these eigenfunctions only depend on $\omega$ but not on  $\rho$,  we immediately get $\psi_l\in H^{s}(K) $ for $s< \frac{5}{4}$.  Moreover, we show that 
\begin{align}
    \label{u_sing}
    \rho^{\lambda}:=\rho^{   -1/2+ \sqrt{ \widetilde{\lambda}_l+1/4}} \in H^{3/2+\varepsilon}(K)
\end{align}
for small $\varepsilon>0$.  We have 
\begin{align}
\label{febr2}
    \| \rho^{\lambda}|H^s(K) \|^2 \sim \displaystyle\int_{0}^c \rho^{(\lambda-s)2} \rho^2 \: d\rho \sim  \Big[ \rho^{(\lambda-s)2+3}\Big]_0^c < \infty \quad \iff \quad s <\frac{3}{2}+\lambda.
\end{align}
Since $\widetilde{\lambda}_l\geq 0$,   we obtain  $\lambda=0$ when $\widetilde{\lambda}_l=0$ in which case  the function $\rho^{\lambda}=1$ is constant and trivially belongs to $H^s(K)$ for any $s$. Otherwise,  we have  $\lambda>0$  and according to \eqref{febr2} the statement \eqref{u_sing} holds for arbitrary small $\varepsilon>0$. Now  we apply the multiplier results from \cite[Thm.~1(i), Sect.~4.6.1, p.~177]{RuSi}, which state that 
\[
\|uv|H^{s_1}(\real^3)\|\lesssim \|v|H^{s_1}(\real^3)\|\cdot \|u|H^{s_2}(\real^3)\|\quad \text{for} \quad s_2>s_1, \ s_1+s_2>0,\  s_2>\frac 32
\]
together with the extension operator for the $H^s$-spaces from  \cite[Thm.~1.2.10]{Gris} and obtain for
$u\in H^{s_2}(K)$ and $v\in H^{s_1}(K)$ that
\begin{align}
    \label{runstsickel}
     \| uv|H^{s_1}(K)\| & \lesssim
     \|\text{Ex}\: u \cdot \text{Ex}\: v  |H^{s_1}(\mathbb{R}^3)\| \notag\\
    & \lesssim  
     \|\text{Ex}\: u |H^{s_2}(\mathbb{R}^3)\| \cdot \| \text{Ex}\: v|H^{s_1}(\mathbb{R}^3)\| \notag\\
    & \lesssim \| u|H^{s_2}(K)\| \cdot \| v|H^{s_1}(K)\|,
\end{align}
where $\text{Ex}\: u,\: \text{Ex}\: v$ are the corresponding extensions from $K$ to $\mathbb{R}^3$ of $u$ and $v$.
We choose in  \eqref{runstsickel} the functions $u=\rho^{\lambda} $, $v=\psi_l $, and the regularity parameters $0<s_1<\frac{5}{4}$, $s_2=\frac{3}{2}+\varepsilon$ and obtain that
\begin{align}
    \label{using2}
    u_{\text{sing}}= \rho^{-\frac{1}{2}+ \sqrt{ \widetilde{\lambda}_l+\frac{1}{4}}} \psi_l \in H^{s}(K) \quad \text{for} \quad s<\frac{5}{4}.
\end{align}
In view of Proposition \ref{propGris}, by \eqref{ureg} and \eqref{using2} we obtain the desired regularity result, which  completes the proof.
\end{proof}

In order to allow inhomogeneous mixed boundary values  we invoke  the following result of Agranovich \cite[Thm.~4.2]{Ag11}.
\begin{proposition}
\label{agranovich}
Let $D\subset\mathbb{R}^3$ be a bounded Lipschitz domain and assume that the boundary $\partial D=\Gamma$ consists of  two domains $ \Gamma^0$ and $\Gamma^1$  and their common boundary $\partial \Gamma^j$, which is a  $1$-dimensional closed Lipschitz curve  without self intersections. Then there exists $\varepsilon>0$ such that for boundary data 
$g_0\in H^{s-1/2}(\Gamma^0)$ and $g_1\in H^{s-3/2}(\Gamma^1)$ with $1-\varepsilon<s<1+\varepsilon$,  there exists  a unique  harmonic function $u$, i.e., $\Delta u =0$ in $D$, such that  
\[
u\in H^s(D) \qquad with \qquad u=g_0\quad \text{on}\quad \Gamma^0, \qquad \frac{\partial u}{\partial \nu}=g_1\quad \text{on}\quad \Gamma^1. 
\]
\end{proposition}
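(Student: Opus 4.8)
Strictly speaking this statement is a specialisation of \cite[Thm.~4.2]{Ag11} from strongly elliptic second-order systems to the scalar Laplacian, so the proof in the paper reduces to that reference. Let me nonetheless describe the route one would take. The plan is to treat the mixed (\emph{Zaremba}-type) problem by the method of harmonic layer potentials on Lipschitz domains, converting it into an elliptic pseudodifferential equation on $\Gamma=\partial D$ and then invoking the $L_2$-based boundary integral theory (Verchota, Jerison--Kenig, Mitrea--Taylor) together with Agranovich's Fredholm analysis across the interface curve $\partial\Gamma^j$.

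First I would make the single-layer ansatz $u=\mathcal{S}\varphi$ with an unknown density $\varphi\in H^{s-3/2}(\Gamma)$, where $\mathcal{S}$ denotes the harmonic single-layer potential. Then $u$ is automatically harmonic in $D$, and the jump relations give $u|_{\Gamma}=\mathcal{S}\varphi$ (continuously) and $\partial u/\partial\nu|_{\Gamma}=(-\tfrac12 I+\mathcal{K}')\varphi$, with $\mathcal{K}'$ the adjoint double-layer operator. Imposing $u=g_0$ on $\Gamma^0$ and $\partial u/\partial\nu=g_1$ on $\Gamma^1$ turns the problem into the boundary equation $\mathcal{A}\varphi=(g_0,g_1)$, where
\[
\mathcal{A}\colon\ H^{s-3/2}(\Gamma)\ \longrightarrow\ H^{s-1/2}(\Gamma^0)\times H^{s-3/2}(\Gamma^1)
\]
acts by restricting $\mathcal{S}\varphi$ to $\Gamma^0$ and $(-\tfrac12 I+\mathcal{K}')\varphi$ to $\Gamma^1$. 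Away from the interface, $\mathcal{A}$ decouples microlocally into the pure Dirichlet operator ($\mathcal{S}$ itself) over $\Gamma^0$ and the pure Neumann operator ($-\tfrac12 I+\mathcal{K}'$) over $\Gamma^1$; for Lipschitz $D$ both are bounded and invertible on the relevant $H^s$-scales in an open interval around the energy exponent $s=1$, the width depending only on the Lipschitz character.

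Next I would globalise. Localising with a partition of unity subordinate to $\Gamma^0$, $\Gamma^1$ and a tubular neighbourhood of $\partial\Gamma^j$, one shows that $\mathcal{A}$ is Fredholm of index zero for $s$ in a (possibly smaller) symmetric window $(1-\varepsilon,1+\varepsilon)$ — this is the step where the limited smoothness of the transmission curve, across which the boundary condition changes type (a crack-type singularity), forces the $\varepsilon$-restriction. Injectivity of $\mathcal{A}$ at $s=1$ comes from uniqueness of the harmonic function with homogeneous mixed data: integrating $|\nabla u|^2$ by parts shows the Dirichlet integral of $u$ vanishes (the boundary terms die because $u=0$ on $\Gamma^0$ and $\partial_\nu u=0$ on $\Gamma^1$), hence $u$ is constant on $D$ and, since $\Gamma^0\neq\emptyset$, $u\equiv 0$; then $\varphi$, being the jump of $\partial_\nu u$ across $\Gamma$, vanishes. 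Fredholm index zero together with the trivial kernel gives bijectivity of $\mathcal{A}$ at $s=1$, and a standard perturbation/continuity argument upgrades invertibility to all $s\in(1-\varepsilon,1+\varepsilon)$. Solving for $\varphi$ and plugging it back into $u=\mathcal{S}\varphi$ yields the unique harmonic $u\in H^s(D)$ with the prescribed Dirichlet and Neumann traces, together with the a priori estimate.

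The main obstacle is the Fredholm and invertibility analysis of $\mathcal{A}$ near the interface curve $\partial\Gamma^j$: this is where one needs the full harmonic-analysis machinery for layer potentials on Lipschitz domains and the delicate localisation argument of Agranovich, and it is also precisely the reason that only a small symmetric window $(1-\varepsilon,1+\varepsilon)$ survives, in contrast to the wider ranges available for the purely Dirichlet or purely Neumann problems (cf. Propositions \ref{JerisonKenig} and \ref{Zanger-Dahlke-0}). Since all of this is carried out in detail in \cite{Ag11}, it suffices to quote that theorem here.
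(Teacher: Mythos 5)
Your proposal is correct and takes the same route as the paper: Proposition \ref{agranovich} is stated there purely as a quotation of \cite[Thm.~4.2]{Ag11}, with no proof supplied beyond the citation, so deferring to that reference is exactly what the paper does. Your additional layer-potential/Fredholm sketch is a plausible outline of how such a result is established (and is closer in spirit to the creased-domain analysis of \cite{mitrea} than to Agranovich's own argument, which proceeds from the variational solution at $s=1$ and a stability-of-invertibility argument on the interpolation scale), but it is not needed for, and not contained in, the paper itself.
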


\begin{remark}
Similar results were obtained in \cite[Thm., p.~4145]{mitrea} for so-called {'creased'} domains (here the authors additionally require  that the angle between the normals to $\Gamma^0$ and $\Gamma^1$ at the points of $\partial\Gamma_j$ are less than $\pi$). Unfortunately, the precise range of   $\varepsilon=\varepsilon(D,\Gamma^0,\Gamma^1)>0$ is not specified any further in \cite{Ag11,mitrea}. 
\end{remark}

Together Propositions \ref{agranovich}  and  \ref{H5/4theorie} allow us to reduce the inhomogeneous mixed problem 
\begin{equation}\label{inhom_mixed}
   -\Delta u=f \quad \text{on}\quad D, \qquad  u=g_0\quad \text{on}\quad \Gamma^0, \qquad \frac{\partial u}{\partial \nu}=g_1\quad \text{on}\quad \Gamma^1 
\end{equation}

to the homogeneous one as explained before for pure Dirichlet and Neumann problems. In total we obtain the following result: 

\begin{proposition}[\bf Fractional Sobolev regularity for mixed problems]
\label{prop-frac-mixed}
Let $D\subset\mathbb{R}^3$ be a bounded Lipschitz domain and assume that the boundary $\partial D=\Gamma$ consists of  two domains $ \Gamma^0$ and $\Gamma^1$  and their common boundary $\partial \Gamma^j$, which is a  $1$-dimensional closed Lipschitz curve  without self intersections. 
 Moreover, let $\varepsilon>0$ be small and consider boundary data 
$g_0\in H^{s-1/2}(\Gamma^0)$ and $g_1\in H^{s-3/2}(\Gamma^1)$ with   $1-\varepsilon<s<1+\varepsilon$. Let  $\alpha_0>5/4$ be the constant from Proposition \ref{propGris}. Then   there exists  a unique  solution $u\in H^{\overline{\alpha}}(D)$ of \eqref{inhom_mixed} for every  $\overline{\alpha}<\alpha_0$ in case of homogeneous boundary conditions ($g_0=g_1=0$) and $\overline{\alpha}=s$ in case of inhomogeneous boundary conditions.
\end{proposition}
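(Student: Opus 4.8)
The plan is to follow the same superposition scheme already used for the pure Dirichlet and Neumann problems (cf.\ the reductions after Propositions~\ref{prop-dirichlet-inhom-0} and~\ref{Zanger-Dahlke-0}): I would split the solution of \eqref{inhom_mixed} into a harmonic part carrying the inhomogeneous boundary data and a part with vanishing boundary data carrying the right-hand side. Concretely, let $u_1$ be the unique harmonic function furnished by Proposition~\ref{agranovich}, i.e.\ $\Delta u_1=0$ in $D$, $u_1=g_0$ on $\Gamma^0$, $\partial u_1/\partial\nu=g_1$ on $\Gamma^1$, with $u_1\in H^s(D)$; and let $u_2$ be the weak solution (obtained by Lax--Milgram on the closed subspace of $H^1(D)$ of functions with vanishing trace on $\Gamma^0$) of $-\Delta u_2=f$ in $D$, $u_2=0$ on $\Gamma^0$, $\partial u_2/\partial\nu=0$ on $\Gamma^1$. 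Since $f\in L_2(D)$ and $D$ is polyhedral, Proposition~\ref{H5/4theorie} then gives $u_2\in H^{\overline\alpha}(D)$ for every $\overline\alpha<\alpha_0$ with $\alpha_0>5/4$. By linearity, $u:=u_1+u_2$ solves \eqref{inhom_mixed}.

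It then remains to read off the regularity. In the homogeneous case $g_0=g_1=0$ one takes $u_1\equiv 0$, so $u=u_2\in H^{\overline\alpha}(D)$ for all $\overline\alpha<\alpha_0$. In the inhomogeneous case, since $\varepsilon>0$ may be taken small we have $s<1+\varepsilon\le 5/4<\alpha_0$, hence also $u_2\in H^s(D)$, and therefore $u=u_1+u_2\in H^s(D)$, i.e.\ $\overline\alpha=s$. As in Propositions~\ref{prop-dirichlet-inhom} and~\ref{Zanger-Dahlke}, it is the smaller Sobolev exponent $s$ of the harmonic boundary part, not the larger exponent $\alpha_0$, that dictates the regularity of $u$.

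For uniqueness I would use the standard energy identity: if $u$ solves \eqref{inhom_mixed} with $f=0$, $g_0=0$, $g_1=0$, then integration by parts yields $\int_D|\nabla u|^2\,dx=\int_{\Gamma^0}u\,\frac{\partial u}{\partial\nu}\,d\sigma+\int_{\Gamma^1}u\,\frac{\partial u}{\partial\nu}\,d\sigma=0$, since $u=0$ on $\Gamma^0$ and $\partial u/\partial\nu=0$ on $\Gamma^1$. Thus $u$ is constant on the connected set $D$, and $u|_{\Gamma^0}=0$ (with $\Gamma^0$ of positive surface measure) forces $u\equiv 0$; the uniqueness assertion follows by subtracting two solutions of \eqref{inhom_mixed}.

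The routine ingredients here are the Lax--Milgram solvability of the $u_2$-problem and the justification of the integration by parts in the energy identity. The genuinely delicate point --- and the reason why two separate results, Propositions~\ref{agranovich} and~\ref{H5/4theorie}, are needed rather than a single elliptic-regularity black box --- is that on a Lipschitz domain with a mixed Dirichlet--Neumann interface the solution picks up two competing sources of low regularity: the edge singularities along the interface curve $\partial\Gamma^j$ (which cap the harmonic part at $s<1+\varepsilon$) and the vertex and edge singularities of the cone itself (which cap the interior part at $\overline\alpha<\alpha_0$, via Grisvard's decomposition in Proposition~\ref{propGris}). Combining the two and observing $\alpha_0>5/4>1+\varepsilon\ge s$ is exactly what makes the statement go through; keeping track of which exponent wins in which regime is where a little care is required.
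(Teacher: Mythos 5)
Your proposal follows essentially the same route as the paper: the superposition $u=u_1+u_2$ with the harmonic part $u_1$ supplied by Proposition~\ref{agranovich} and the homogeneous-boundary part $u_2$ handled by Proposition~\ref{H5/4theorie}, then observing that $s<1+\varepsilon<5/4<\alpha_0$ so the exponent $s$ governs the inhomogeneous case. The extra details you supply (Lax--Milgram solvability of the $u_2$-problem and the energy-identity uniqueness argument) are correct and consistent with what the paper leaves implicit.
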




\paragraph{Besov regularity of the mixed problem}

Before we come to the regularity of the solution of  \eqref{generalproblem} in Besov spaces,
we first deduce the following regularity assertion in weighted Sobolev spaces  as a consequence of  Maz'ya and Rossmann \cite[Lem.~4.4, Thm.~4.4]{MR02}.

\begin{theorem}[\bf Weighted Sobolev regularity]
\label{mixedregularity_neu}
 Let $K$ be a polyhedral cone and let $J_0,\: J_1$ be non-empty subsets of $J=\{1,2,...,n   \}$, $l\in\mathbb{N}$, $l\geq 2$, $\beta\in \mathbb{R}$, $\delta=(\delta_1,...,\delta_n)\in \mathbb{R}^n$. Moreover, let $f\in  \mathcal{W}_{\beta,\delta}^{l-2,2}(K,\tilde{J})$, $g_j\in \mathcal{W}_{\beta,\delta}^{l-3/2,2}(\Gamma_j;\tilde{J}) $  for $j\in J_1$ and $g_j\in \mathcal{W}_{\beta,\delta}^{l-1/2,2}(\Gamma_j;\tilde{J}) $ for $j\in J_0$. Suppose that the line $\text{Re}\: \lambda=l-\beta-3/2$ does not contain eigenvalues of the pencil $\mathfrak{A}$ and that the components of $\delta$ satisfy the inequalities \begin{align}
\label{dec11b_alt}
    l-\delta_{+}^{(j)} <\delta_j+1<l &\quad\text{for}\quad j\in \tilde{J}, \notag\\
    \max\left(l-\delta_{+}^{(j)},l-2\right)<\delta_j+1<l &\quad\text{for}\quad j\in J\setminus\tilde{J},
\end{align}
where $\theta_j$ denotes the angle at the edge $M_j$. 
Then the problem \eqref{generalproblem} has a unique solution $u\in\mathcal{W}_{\beta,\delta}^{l,2}(K,\tilde{J})$.
\end{theorem}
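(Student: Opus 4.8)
The plan is to deduce the statement from the general theory of elliptic boundary value problems on polyhedral cones of Maz'ya and Rossmann, specialized to the Laplacian, to $p=2$, and to the weight class $\mathcal{W}_{\beta,\delta}^{l,2}(K,\tilde{J})$. The argument proceeds by the usual localization. First I would cover the (truncated) cone by a neighbourhood $U_0$ of the vertex, neighbourhoods $U_j$ of the edges $M_j$ that stay away from the vertex, and further neighbourhoods of interior points and of points on the smooth parts of the faces; then, subordinate to this covering, I would choose a smooth partition of unity $\{\varphi_\nu\}$ and treat each piece $\varphi_\nu u$ with the model problem appropriate to its location.

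On the pieces away from the singular set $S$ the weights $\rho_0$ and $r_k$ are comparable to positive constants, so $\mathcal{W}_{\beta,\delta}^{l,2}$ coincides locally with the unweighted Sobolev space and classical interior and smooth-boundary elliptic regularity applies (each such piece meets at most one face, so only a pure Dirichlet or pure Neumann model appears there). Near an edge $M_j$ the problem is modelled on the dihedron $\mathcal{D}_j$ carrying Dirichlet data on the faces indexed by $J_0$ and Neumann data on those indexed by $J_1$; here I would invoke \cite[Lem.~4.4]{MR02}, which yields unique solvability in the relevant edge-weighted space provided the line carrying the weight exponent lies in the strip $-\delta_-^{(j)}<\mathrm{Re}\,\lambda<\delta_+^{(j)}$ that is free of eigenvalues of the edge pencil $A_j(\lambda)$ (with the harmless exception of $\lambda=0$ when both adjacent faces carry Neumann data, i.e. $j\in J\setminus\tilde{J}$). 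This is precisely where the two-line condition \eqref{dec11b_alt} enters: the weight exponent on the dihedron is $\delta_j-l+|\alpha|$ for $j\in\tilde{J}$ but $\delta_j$ for $j\in J\setminus\tilde{J}$, as dictated by the definition of the $\mathcal{W}$-norm, and the bound $l-\delta_+^{(j)}<\delta_j+1$ places the corresponding line inside the admissible strip, while $\delta_j+1<l$ (and, for $j\in J\setminus\tilde{J}$, additionally $\delta_j+1>l-2$, cf. Remark \ref{rem-traces-cond}(iv)) secures the required integrability and guarantees that the boundary data $g_j$ lie in the stated trace spaces.

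For the piece near the vertex I would pass to spherical coordinates and apply the Mellin transform in $\rho$, reducing the model problem to the vertex pencil $\mathfrak{A}(\lambda)$; the hypothesis that the line $\mathrm{Re}\,\lambda=l-\beta-3/2$ is free of eigenvalues of $\mathfrak{A}$, together with the edge conditions already verified, is exactly what \cite[Thm.~4.4]{MR02} requires in order to solve the model problem uniquely in $\mathcal{W}_{\beta,\delta}^{l,2}$ in a neighbourhood of the vertex, with the corresponding a priori estimate in the spirit of \eqref{estimatesept15}. Finally I would reassemble: writing $u=\sum_\nu\varphi_\nu u$, the commutators $[\Delta,\varphi_\nu]u$ contribute only terms of lower differentiation order supported off the singular stratum relevant to $\varphi_\nu$, hence lying in $\mathcal{W}_{\beta,\delta}^{l-2,2}(K)$ by the chain of embeddings recorded after Definition \ref{tracespaceV}; patching the local solutions gives the global one, and uniqueness is inherited from the uniqueness in each model problem via the a priori estimates.

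The hard part will be the bookkeeping at the edges: one must check that, after freezing coefficients on the model dihedron $\mathcal{D}_j$, the mixed $\mathcal{W}$-weight translates into exactly the weighted Sobolev space on $\mathcal{D}_j$ for which \cite[Lem.~4.4]{MR02} is formulated, and that the admissibility window for the edge pencil $A_j(\lambda)$ obtained this way is precisely \eqref{dec11b_alt} --- with the Neumann-on-both-sides edges requiring the extra care of Remark \ref{rem-traces-cond}(iv) because of the compatibility conditions on $g_{j_{\pm}}$. Once both families of conditions are matched against the hypotheses of \cite[Lem.~4.4, Thm.~4.4]{MR02}, the global statement follows by the standard patching argument.
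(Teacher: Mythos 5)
Your overall strategy --- localize, treat model problems at the vertex and the edges via the pencils $\mathfrak{A}(\lambda)$ and $A_j(\lambda)$, and patch --- is the route by which Maz'ya and Rossmann establish their results in the first place, but it is not the route the paper takes, and as sketched it has a genuine gap concerning the regularity level $l$. The results you cite do not do what you ascribe to them: \cite[Thm.~4.4]{MR02} is a \emph{global} solvability statement for the cone at the base level $l=2$ only (not a vertex model problem at level $l$), and \cite[Lem.~4.4]{MR02} is a regularity-lifting lemma (not an edge/dihedron solvability result). Consequently your argument never explains how to obtain a solution in $\mathcal{W}^{l,2}_{\beta,\delta}(K,\tilde{J})$ for $l>2$; that passage is the entire content of the paper's proof. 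The paper first shifts the data down to the base case using the elementary embedding \eqref{emb-w-spaces} together with \eqref{sept23}, so that $f\in W^{0,2}_{\beta-(l-2),\delta'}(K)$ with $\delta_j'=\delta_j-(l-2)$ and the hypotheses \eqref{dec11b_alt} become exactly the $l=2$ admissibility window \eqref{assumptionfordelta'_neu}; it observes that the critical line $\mathrm{Re}\,\lambda=l-\beta-3/2$ is invariant under this shift; it then applies \cite[Thm.~4.4]{MR02} to obtain a unique solution in $\mathcal{W}^{2,2}_{\beta-(l-2),\delta'}(K,\tilde{J})$; and finally it lifts this to $\mathcal{W}^{l,2}_{\beta,\delta}(K,\tilde{J})$ via \cite[Lem.~4.4]{MR02}. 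This also shows that the lower bound $l-2<\delta_j+1$ for $j\in J\setminus\tilde{J}$ is an artifact of the reduction (the shifted exponents must satisfy $\delta_j'>-1$), not a trace-compatibility requirement as you suggest.

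Beyond the misattribution, the steps you defer are precisely the hard ones. Patching local solutions of model problems yields a priori estimates and a parametrix, not immediately existence; the commutators $[\Delta,\varphi_\nu]u$ involve derivatives of $u$ itself, so controlling them presupposes regularity of $u$ and forces an iteration rather than a one-shot argument; and global uniqueness does not follow from uniqueness of the local model problems without a global variational or Fredholm argument. If you want to avoid redoing all of this, follow the paper: quote the $l=2$ solvability and the regularity lemma as black boxes and supply only the parameter bookkeeping for the shift $\beta\mapsto\beta-(l-2)$, $\delta\mapsto\delta-(l-2)$.
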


\begin{proof} We apply 
the following elementary embedding from \cite[p.~439]{MR02}, i.e., 
\begin{equation}\label{emb-w-spaces}
     \mathcal{W}_{\beta+1,\delta}^{l+1,2}(K,\tilde{J})\hookrightarrow \mathcal{W}_{\beta,\delta'}^{l,2}(K,\tilde{J})
\end{equation}
with $\delta_j-\delta_j'\leq 1$ for $j=1,...,n$ and $\delta_j,\:\delta_j'>-1$ for $j\in J\setminus \tilde{J}$, which together with the embedding \eqref{sept23} yields 
\begin{align}
    \label{embeddingfor_f}
    f\in \mathcal{W}_{\beta,\delta}^{l-2,2}(K,\tilde{J}) \hookrightarrow \mathcal{W}_{\beta-(l-2),\delta-(l-2)}^{0,2}(K,\tilde{J})\hookrightarrow W_{\beta-(l-2),\delta'}^{0,2}(K),
\end{align}
where $\delta_j':=\delta_j-(l-2)$ and 
\begin{align}
    \label{assumptionfordelta'_neu}
        2-\frac{\pi}{\theta_j}<\delta_j'+1<2 &\quad\text{for}\quad j\in \tilde{J}, \notag\\
    \max\left(2-\frac{\pi}{\theta_j},0\right)<\delta_j'+1 <2 &\quad\text{for}\quad j\in J\setminus\tilde{J}. 
\end{align}
Another application of \eqref{embeddingfor_f} to the boundary values gives 
\begin{align}
    \label{functiongj_neu}
   g_j\in \mathcal{W}_{\beta,\delta}^{l-1/2,2}(\Gamma_j;\tilde{J}) &\hookrightarrow \mathcal{W}_{\beta-(l-2),\delta'}^{2-1/2,2}(\Gamma_j;\tilde{J})\qquad \text{for}   \quad  j\in J_0,\notag\\
   g_j\in \mathcal{W}_{\beta,\delta}^{l-1-1/2,2}(\Gamma_j;\tilde{J})&\hookrightarrow \mathcal{W}_{\beta-(l-2),\delta'}^{1-1/2,2}(\Gamma_j;\tilde{J}) \qquad \text{for} \quad  j\in J_1.
\end{align}
Moreover, from the initial assumptions we see that the line 
$$\text{Re} \: \lambda=2-(\beta-(l-2))-3/2=l-\beta-3/2$$ does not contain eigenvalues of the pencil $\mathfrak{A}$. Hence, according to \eqref{embeddingfor_f}, \eqref{assumptionfordelta'_neu} and \eqref{functiongj_neu} we can apply \cite[Thm.~4.4]{MR02} and obtain that there is a unique solution $u\in \mathcal{W}_{\beta-(l-2),\delta'}^{2,2}(K,\tilde{J}) $ of \eqref{generalproblem}. Furthermore, with this also the assumptions of \cite[Lem.~4.4]{MR02} are satisfied for $k=2$, and we deduce that the solution $u$ in fact belongs to $\mathcal{W}_{\beta,\delta}^{l,2}(K,\tilde{J})$.
\end{proof}

\begin{remark} Note that the lower bound for $\delta_j$ when $j\in J\setminus \tilde{J}$ in \eqref{dec11b_alt} can probably be replaced by 
\[
\max\left(l-\delta_{+}^{(j)},0\right)<\delta_j+1 
\]
if we combine \cite[Thms.~5.4,~5.5]{MR02} and work with right-hand sides $f$ in \eqref{generalproblem} being functionals from the dual space  $V^{-1,2}_{\beta,\delta}(K)$ of $V^{1,2}_{-\beta,-\delta}(K)$. However, in order to keep our presentation as simple as possible, we decided to stick with the additional assumption $l-2<\delta_j+1$ above for the time being. 
\end{remark}

Finally, combining the regularity results from Proposition \ref{prop-frac-mixed} and  Theorem \ref{mixedregularity_neu}   we obtain the following regularity result in Besov spaces for the mixed boundary value problem.

\begin{theorem}[\bf Besov regularity for mixed boundary value problems]
\label{pdeforWspacemixed}
Let $K$ be a polyhedral cone and $\tilde{K}$ its truncation.      
For non-empty subsets $J_0$ and $J_1$  of $J=\{1,2,...,n   \}$ with $J_0\cup J_1=J$ and $J_0\cap J_1=\emptyset$  put  
$\Gamma^i:=\bigcup_{j\in J_i}\Gamma_j$ \text{and} $\tilde{\Gamma}^i:=\Gamma^i\cap \tilde{K}$ for $i=0,1$.  
Additionally,   assume that the intersection $\overline{\Gamma^0}\cap \overline{\Gamma^1}$ consists of two edges of $K$ only. \\  
Let  $l\in\mathbb{N}$, $l\geq 2$, $\beta\in \mathbb{R}$, $l>\beta$,  $\delta=(\delta_1,...,\delta_n)\in \mathbb{R}^n$,  
and assume  the right-hand side of \eqref{generalproblem} satisfies $f\in \mathcal{W}_{\beta,\delta}^{l-2,2}(K)\cap L_2(\tilde{K})$. For $\varepsilon>0$ small and $1-\varepsilon<s<1+\varepsilon$, let the boundary data satisfy 
$g^1\in H^{s-3/2}(\tilde{\Gamma}^1)$ with $g^1=g_j$ on $\Gamma_j$ and $g_j\in \mathcal{W}_{\beta,\delta}^{l-3/2,2}(\Gamma_j;\tilde{J}) $ for $j\in J_1$ and $g^0\in H^{s-1/2}(\tilde{\Gamma}^0)$ with
$g^0=g_j$ on $\Gamma_j$ and
$g_j\in \mathcal{W}_{\beta,\delta}^{l-1/2,2}(\Gamma_j;\tilde{J}) $ for $j\in J_0$. 
Further assume that the line $\text{Re}\: \lambda=l-\beta-3/2$ does not contain eigenvalues of the pencil $\mathfrak{A}$
and the components of $\delta$ satisfy \eqref{dec11b_alt}.
Then for the solution $u$ of \eqref{generalproblem} we have
\begin{align*}
    u\in B_{\tau,\tau}^r(\tilde{K}), \qquad r<\min\Big\{{l},{3(l-|\delta|)}, 3\overline{\alpha}  \Big\}, \quad  \frac{1}{\tau}=\frac{r}{3}+\frac{1}{2}, 
\end{align*}
where $\overline{\alpha}(K)=\overline{\alpha}$ is the number from Proposition \ref{prop-frac-mixed}.
\end{theorem}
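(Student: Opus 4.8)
The plan is to mirror the proof of Theorem \ref{theorem7} (the Dirichlet case) and of Theorem \ref{pdeforWspace} (the Neumann case), replacing each ingredient by its mixed-problem counterpart. First I would invoke Theorem \ref{mixedregularity_neu}: under the stated assumptions on $l$, $\beta$, the pencil $\mathfrak{A}$ and the components of $\delta$ (condition \eqref{dec11b_alt}), together with the membership $f\in \mathcal{W}_{\beta,\delta}^{l-2,2}(K)$ and the boundary data $g_j\in \mathcal{W}_{\beta,\delta}^{l-1/2,2}(\Gamma_j;\tilde{J})$ for $j\in J_0$, $g_j\in \mathcal{W}_{\beta,\delta}^{l-3/2,2}(\Gamma_j;\tilde{J})$ for $j\in J_1$, there is a unique solution $u\in \mathcal{W}_{\beta,\delta}^{l,2}(K;\tilde{J})$ of \eqref{generalproblem}.

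Next I would handle the fractional Sobolev regularity. The decomposition $u=u_1+u_2$ into the solution $u_1$ of the homogeneous PDE with the prescribed inhomogeneous boundary data and the solution $u_2$ of the inhomogeneous PDE with homogeneous boundary data reduces matters to Proposition \ref{prop-frac-mixed}: the hypothesis $f\in L_2(\tilde{K})$, $g^0\in H^{s-1/2}(\tilde{\Gamma}^0)$, $g^1\in H^{s-3/2}(\tilde{\Gamma}^1)$ with $1-\varepsilon<s<1+\varepsilon$, plus the geometric assumption that $\overline{\Gamma^0}\cap\overline{\Gamma^1}$ is a Lipschitz curve (here: two edges), gives $u\in H^{\overline{\alpha}}(\tilde{K})$, with $\overline{\alpha}<\alpha_0$ in the homogeneous case and $\overline{\alpha}=s$ otherwise. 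Thus $u\in \mathcal{W}_{\beta,\delta}^{l,2}(K;\tilde{J})\cap H^{s'}(\tilde{K})$ for every $s'\le\overline{\alpha}$.

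Finally, since $B^{s'}_{2,2}(\tilde{K})=H^{s'}(\tilde{K})$ in the sense of equivalent norms, I would apply the embedding Theorem \ref{embeddingBesovWspaces} with $p=2$ and $\delta_j>0$ for all $j$: it yields $u\in B^r_{\tau,\tau}(\tilde{K})$ with $\frac{1}{\tau}=\frac{r}{3}+\frac{1}{2}$ and $r<\min\{l,3(l-|\delta|),3\overline{\alpha}\}$, which is exactly the claimed conclusion. (If $\delta$ has negative components one uses the second alternative in Theorem \ref{embeddingBesovWspaces}, giving $r<3\overline{\alpha}$ together with one of the conditions in \eqref{sept16}, as in Remark \ref{remark7}.)

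The only genuinely non-routine point is checking that the parameter hypotheses of the three cited results are mutually compatible and correctly translated: one must verify that the pencil condition $\mathrm{Re}\,\lambda=l-\beta-3/2$ being eigenvalue-free and the inequalities \eqref{dec11b_alt} are precisely what Theorem \ref{mixedregularity_neu} requires, that the geometric hypothesis on $\overline{\Gamma^0}\cap\overline{\Gamma^1}$ matches the Lipschitz-curve assumption in Proposition \ref{prop-frac-mixed}, and that $l>\beta$ is the extra hypothesis needed to run Theorem \ref{embeddingBesovWspaces}. Everything else is a direct concatenation of the quoted theorems, exactly parallel to the proofs of Theorems \ref{theorem7} and \ref{pdeforWspace}, so I would keep the write-up short and simply point to those proofs.

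\begin{proof}
The proof is the counterpart of Theorems \ref{theorem7} and \ref{pdeforWspace} for mixed boundary values. Our assumptions on $f$, $g_j$ and on the parameters $l,\beta,\delta$ together with Theorem \ref{mixedregularity_neu} guarantee the existence of a unique solution $u\in\mathcal{W}_{\beta,\delta}^{l,2}(K;\tilde{J})$ of \eqref{generalproblem}. On the other hand, splitting $u=u_1+u_2$ into the solution $u_1$ of the homogeneous PDE with the prescribed boundary data $g^0,g^1$ and the solution $u_2$ of $-\Delta u_2=f$ with homogeneous mixed boundary data, Proposition \ref{prop-frac-mixed} applies (the assumption that $\overline{\Gamma^0}\cap\overline{\Gamma^1}$ consists of two edges of $K$ ensures that $\partial\Gamma^j$ is a closed Lipschitz curve without self-intersections) and yields $u\in H^{\overline{\alpha}}(\tilde{K})$, where $\overline{\alpha}<\alpha_0$ in case of homogeneous boundary conditions and $\overline{\alpha}=s$ otherwise. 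Hence
\[
u\in \mathcal{W}_{\beta,\delta}^{l,2}(K;\tilde{J})\cap H^{s'}(\tilde{K})\qquad\text{for every}\quad s'\le\overline{\alpha}.
\]
Since $B^{s'}_{2,2}(\tilde{K})=H^{s'}(\tilde{K})$ in the sense of equivalent norms and $l>\beta$, Theorem \ref{embeddingBesovWspaces} (with $p=2$) implies that $u\in B_{\tau,\tau}^r(\tilde{K})$ with $\frac{1}{\tau}=\frac{r}{3}+\frac{1}{2}$ and $r<\min\{l,3(l-|\delta|),3\overline{\alpha}\}$, which is the desired conclusion.
\end{proof}
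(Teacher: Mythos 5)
Your proposal is correct and follows exactly the paper's route: the paper's own proof is the one-line concatenation of Theorem \ref{mixedregularity_neu} (weighted Sobolev regularity), Proposition \ref{prop-frac-mixed} (fractional Sobolev regularity via the splitting $u=u_1+u_2$ already set up before that proposition), and the embedding Theorem \ref{embeddingBesovWspaces}. You have simply written out the details that the paper leaves implicit.
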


\begin{proof}
The proof follows immediately from  Proposition  \ref{prop-frac-mixed}  and  Theorem \ref{mixedregularity_neu} combined with Theorem \ref{embeddingBesovWspaces}.
\end{proof}

\paragraph{Acknowledgement.} The authors thank Stephan Dahlke and Winfried Sickel for helpful discussions  and  pointing out some references on the subject. 



\begin{thebibliography}{KemXX}



\bibitem{ADN59}
Agmon, S.,   Douglis, A.,  Nierenberg, L. (1959).   
\newblock Estimates near the boundary for solutions of elliptic partial differential equations satisfying general boundary conditions I.  
\newblock{\em Comm. Pure Appl. Math.} {\bf 12}, 623--727.  

\bibitem{Ag11}
Agranovich, M. S. (2011).   
\newblock Mixed Problems in a Lipschitz Domain for Strongly Elliptic
Second-Order Systems.  
\newblock{\em Funct. Anal.  Appl.} {\bf 45}, no. 2, 81--98.




\bibitem{Cio20}
{Cioica-Licht, P. A.} (2020). 
{\em An $L_p$-theory for the stochastic heat equation on angular domains in $\real^2$ with mixed weights}.
{Preprint}, arXiv:2003.03782v2 [math.PR]. 

\bibitem{CLK19}
{Cioica-Licht, P. A., Kim, K.-H.,  Lee, K. } (2019). 
{On the regularity of the stochastic heat equation on polygonal domains in $\real^2$}.
{\em J. Differential Equations}, {\bf 267}, 6447--6479. 

\bibitem{CW20}
Cioica-Licht, P., Weimar, M. (2020).
\newblock {On the limit regularity in Sobolev and Besov scales related to approximation theory.}
\newblock {\em J. Fourier Anal. Appl.} {\bf 26}(1), 10.

\bibitem{Cost19}
Costabel, M. (2019).
\newblock {On the limit Sobolev regularity for Dirichlet and Neumann problems on Lipschitz domains.}
\newblock {\em Math. Nachr.} {\bf 292}, 2165--2173.




 \bibitem{Dah99b}
Dahlke, S. (1999).  
\newblock Besov regularity for elliptic boundary value problems on polygonal domains.  
\newblock {\em Appl. Math.  Lett.} {\bf 12}(6),  31--38.

 \bibitem{Dah2001}
Dahlke, S. (2001).  
\newblock Besov Regularity for the Neumann Problem.
\newblock {\em Berichte aus der Technomathematik.} Report 01-11.



\bibitem{DDD}
Dahlke, S., Dahmen, W.,  DeVore, R. (1997). 
\newblock {Nonlinear approximation and adaptive techniques for solving elliptic operator equations}.  
\newblock {\em Multiscale Wavelet Methods for Partial Differential Equations, (W. Dahmen, A.J. Kurdila, and P. Oswald, eds), Wavelet Analysis and Applications}, vol. 6, Academic Press, San Diego,  237-283.  




 \bibitem{DDV97}
Dahlke, S., DeVore, R. A. (1997). 
\newblock{Besov regularity for elliptic boundary value problems.} 
\newblock {\em Comm. Partial Differential Equations} {\bf 22}, No. 1-2, 1--16.




  \bibitem{SMCW}
Dahlke, S., Hansen, M., Schneider, C., Sickel, W. (2020).
\newblock{\em Properties of Kondratiev spaces.} 
\newblock Preprint.


\bibitem{DNS2} 
 Dahlke, S., Novak, E.,   Sickel, W. (2006).  
 Optimal approximation of elliptic problems by linear and nonlinear mappings. II. 
 \emph{J.\ Complexity} \textbf{22}, 549--603.

 


  \bibitem{DS19}
Dahlke, S., Schneider, C. (2019).
\newblock{\em Besov regularity of parabolic and hyperbolic PDEs.} 
\newblock {\em Anal. Appl.} {\bf 17},  235-291.

 \bibitem{HS1}
Dahlke, S., Sickel, W. (2008). 
\newblock{Besov Regularity for the Poisson Equation in Smooth and Polyhedral Cones.} 
\newblock {\em Sobolev spaces in mathematics II. Applications in analysis and partial differential equations}, 123--145. 





\bibitem{GM09}
Gaspoz, F. D., Morin, P. (2009).
\newblock Convergence rates for adaptive finite elements.
\newblock {\em IMA J. Numer. Anal.} {\bf 29}(4), 917--936.


\bibitem{Gris}
Grisvard, P. (1992).
\newblock{\em Singularities in Boundary Value Problems.}
\newblock Research Notes in Applied Mathematics 22, Springer, Berlin.

\bibitem{Gris11}
Grisvard, P. (2011).
\newblock {\em Elliptic problems in nonsmooth domains.}
\newblock Reprint of the 1985 original. { Classics in Applied Mathematics}, 69, SIAM, Philadelphia. 



\bibitem{Han15}
Hansen, M. (2015).
\newblock{\em Nonlinear approximation rates and Besov regularity for elliptic PDEs on polyhedral domains.}
\newblock {\em Found. Comput. Math.} {\bf 15}, 561--589.


\bibitem{SS20}
Hansen, M., Schneider, C., Szemenyei, F. (2020).
\newblock{\em An extension operator for Sobolev spaces with mixed weights.}
\newblock {\em Submitted}, arXiv:2102.09937 [math.FA].


\bibitem{JK95}
Jerison, D.,  Kenig, C. E. (1995). 
\newblock{The inhomogeneous Dirichlet problem in Lipschitz domains.}
\newblock {\em J. of Func. Anal.} {\bf 130}, 161--219. 




 


 \bibitem{KMR01}
Kozlov, V. A., Maz'ya, V. G., Rossmann, J. (2001). 
\newblock{\em Spectral problems associated with corner singularities of solutions to elliptic equations.} 
\newblock {Mathematical Surveys and Monographs} {\bf 85}, AMS, Providence, RI. 


\bibitem{Kuf}
Kufner, A. (1980).
\newblock{\em Weighted Sobolev spaces.}
\newblock Teubner-Texte zur Mathematik. 

 
 
 
 
   \bibitem{MR02}
Maz'ya, V. G., Rossmann, J. (2003). 
\newblock{Weighted $L_p$ estimates of solutions to boundary value problems for second order elliptic systems in polyhedral domains.} 
\newblock {\em Z. Angew. Math.  Mech.}, {\bf 89}, no. 7, 435--467. 

  \bibitem{MR10}
Maz'ya, V. G., Rossmann, J. (2010). 
\newblock{\em Elliptic equations in polyhedral domains.} 
\newblock Mathematical Surveys and Monographs {\bf 162}, AMS, Providence, RI. 

  \bibitem{mitrea}
Mitrea, I., Mitrea, M. (2007). 
\newblock{The Poisson problem with mixed boundary conditions in Sobolev and Besov spaces in non-smooth domains.} 
\newblock {\em Trans. Amer. Math. Soc.}, {\bf 359}, no. 9, 4143--4182. 




  \bibitem{RuSi}
Runst, T, Sickel, W. (1996). 
\newblock{ Sobolev Spaces of Fractional Order, Nemytskij Operators, and Nonlinear Partial Differential Equations.} 
\newblock {\em  De Gruyter Series in Nonlinear Analysis and Applications, 3}.

\bibitem{Ry}
Rychkov, V. S. (1999).
\newblock{\em On restrictions and extensions of the Besov and Triebel-Lizorkin spaces with respect
to Lipschitz domains.}
\newblock J. London Math. Soc. Vol. 60, 237–257.






\bibitem{T-F1}
Triebel, H. (1983).
\newblock{\em Theory of function spaces.}
\newblock Monographs in Mathematics {\bf 78}, Birkh\"auser Verlag, Basel. 

\bibitem{Zan00}
Zanger, D. (2000).
\newblock{The inhomogeneous Neumann problem in Lipschitz domains.}
\newblock {\em Comm. Partial Differential Equations} {\bf 25(9\&{}10)}, 1771--1808. 


\end{thebibliography}
\end{document}